\newcolumntype{C}[1]{>{\centering\arraybackslash}p{#1}}
\newcolumntype{L}[1]{>{\arraybackslash}p{#1}}
\newtheorem{theorem}{Theorem}[section]
\newtheorem{lemma}[theorem]{Lemma}
\newtheorem{proposition}[theorem]{Proposition}
\newtheorem{definition}[theorem]{Definition}
\newtheorem{remark}[theorem]{Remark}
\newcommand{\Prop}{\mathsf{Prop}}
\newcommand{\X}{\mathbb{X}}
\newcommand{\val}[1]{[\![{#1}]\!]}
\newcommand{\descr}[1]{(\![{#1}]\!)}
\newcommand{\SI}{\mathsf{SD}}
\newcommand{\PP}{\mathsf{PP}}
\newcommand{\mathsc}[1]{{\normalfont\textsc{#1}}}
\renewcommand{\phi}{\varphi}
\title{Modelling socio-political competition}
\author{Willem Conradie$^{a}$, Alessandra Palmigiano$^{b,c}$, Claudette Robinson$^{c}$,\\ 
	Apostolos Tzimoulis$^{b}$, Nachoem Wijnberg$^{d,e}$\\ 
$^a$School of Mathematics, University of the Witwatersrand \\
$^{b}$School of Business and Economics, Vrije Universiteit, Amsterdam\\
$^c$Department of Pure and Applied Mathematics, University of Johannesburg\\
$^d$ Amsterdam Business School, University of Amsterdam\\
$^e$ College of Business and Economics,  University of Johannesburg
} 
\begin{document}
\maketitle
\begin{abstract}
This paper continues the investigation of the logic of competing theories (be they scientific, social, political etc.) initiated in \cite{eusflat}. We introduce a many-valued, multi-type modal language which we endow with relational semantics based on enriched reflexive graphs, inspired by Ploščica's representation of general lattices. We axiomatize the resulting many-valued, non-distributive modal logic of these structures and prove a completeness theorem. We illustrate the application of this logic through a case study in which we model competition among interacting political promises and social demands within an arena of political parties social groups.\\
{\bf Keywords:} Non distributive modal logic, Graph-based semantics, Many-valued modal logic, Competing theories, Socio-political competition.
\end{abstract}

%
% A "teaser" image appears between the author and affiliation information and the body 
% of the document, and typically spans the page. 
%\begin{teaserfigure}
 % \includegraphics[width=\textwidth]{sampleteaser}
  %\caption{Seattle Mariners at Spring Training, 2010.}
  %\Description{Enjoying the baseball game from the third-base seats. Ichiro Suzuki %preparing to bat.}
  %\label{fig:teaser}
%\end{teaserfigure}

%
% This command processes the author and affiliation and title information and builds
% the first part of the formatted document.
\maketitle
\section{Introduction}
This paper is a continuation of the investigation into competing theories started in \cite{eusflat}.  Its technical contributions are rooted in  the generalized Sahlqvist canonicity and correspondence for {\em normal lattice-based logics} \cite{CoPa:non-dist,conradie2016constructive}, i.e.~nonclassical propositional logics for which the distributive laws between $\wedge$ and $\vee$ do not need to hold. Via algebraic and duality-theoretic techniques, these logics, and non-distributive normal modal logics in particular, have been endowed with complete relational semantics  based on {\em formal contexts}  \cite{ganter2012formal} %\cite{gehrke2006generalized, galatos2013residuated, conradie2016categories,Tarkpaper, greco2018algebraic} 
and {\em reflexive graphs} \cite{conradie2015relational, graph-based-wollic}.  These semantic structures have a well developed theory, both  algebraic and proof-theoretic \cite{greco2018algebraic, craig-gouveia-haviar, craig-priestley} and model-theoretic \cite{conradie2018goldblatt}, and have facilitated new insights on possible interpretations and use of lattice-based modal logics. 

In particular, via formal context semantics, in \cite{conradie2016categories}, the basic non-distributive modal logic and some of its axiomatic extensions are interpreted as  {\em epistemic logics of categories and concepts}, and in \cite{Tarkpaper}, the corresponding `common knowledge'-type construction is used to give an epistemic-logical formalization of the notion of {\em prototype} of a category; in \cite{roughconcepts,ICLA2019paper}, formal context semantics for non-distributive modal logic is proposed as an encompassing framework for the integration of rough set theory \cite{pawlak} and formal concept analysis \cite{ganter2012formal}, and in this context, the basic non-distributive modal logic is interpreted as  the logic of {\em rough concepts};    via  graph-based semantics, in \cite{graph-based-wollic}, the same logic is interpreted as the logic of {\em informational entropy}, i.e.~an inherent boundary to knowability due e.g.~to perceptual, theoretical, evidential or linguistic limits, and in \cite{eusflat},  {\em many-valued} graph-based semantics is introduced for non-distributive normal modal logic, and its potential is explored as a formal framework for modelling {\em competing theories} in the empirical sciences. 

Both in the crisp and in the many-valued setting, in the graphs $(Z, E)$ on which the relational structures are based, the relation $E$  is interpreted as an {\em indiscernibility} relation, which makes the present approach similar to that of approximation spaces in rough set theory \cite{pawlak}. However, the key difference is that,  rather than generating modal operators which associate any subset of $Z$ with its definable $E$-approximations,  $E$ generates a complete lattice in which the distributivity laws do not need to hold. This lattice is defined as the concept lattice of the formal context $(Z, Z, E^c)$ arising from the graph $(Z, E)$. In the approach proposed in \cite{graph-based-wollic, eusflat} and followed in the present paper, concepts are not understood as definable approximations of predicates, but rather they represent `all there is to know',  i.e.~the theoretical horizon to knowability, given the inherent boundary  encoded into $E$. Interestingly, $E$ is required to be reflexive but in general neither transitive nor symmetric, which is in line with what observed in the the literature in psychology (cf.~\cite{tversky1977features,nosofsky1991stimulus}) and business science \cite{hannan}.

In this paper, we start exploring a semantic setting for non-distributive modal logics that is not only {\em many-valued}, as  the setting of \cite{eusflat} is, but unlike \cite{eusflat} is also {\em multi-type}. The main motivation and starting point  of the present contribution is to introduce a formal environment in which to analyse the similarities between the competition among {\em political theories} (both in their institutional incarnations as political parties, and in their social incarnations as social blocks or groups) and the competition between scientific theories as treated in \cite{eusflat}.

In \cite{eusflat}, scientific theories are identified with the sets of their relevant variables (e.g.~mass, speed, position are relevant variables for gravitation theory); hypotheses formulated in the background of a given theory $X$ establish connections between variables  in $X$ and are captured as formulas which can be tested  (i.e.~evaluated) on different databases (i.e.~states of the domain $Z$ of a graph-based model), with a greater or lesser degree of {\em confidence} in the outcome of the test (captured in the truth-value in the many-valued semantics). Since databases themselves are built according to a given theory (``observations are theory-laden''), the degree of confidence in the outcome of tests is formulated in terms of how compatible the background theory of the given hypothesis is with the theory according to which the given database has been built. Theories compete in the arena of databases by their key hypotheses being tested  on different databases. Then the criteria establishing whether  theory $X$ outcompetes theory $Y$ need to assign different weights to the performances of hypotheses  on databases that have high compatibility with the theories to which each hypothesis pertains, and to the performances of the same hypotheses on databases with low compatibility. In the present paper, we propose the following analogies:
\begin{center}
\begin{tabular}{r c l}
Scientific theories &$\rightsquigarrow$& Socio-political theories\\
Variables &$\rightsquigarrow$& Issues\footnote{E.g.~for a socialist theory, distribution of wealth, access to education, progressive taxation are relevant issues.}\\
Hypotheses &$\rightsquigarrow$&  Promises / Demands \\
\end{tabular}
\end{center}
The main difference between the competition of scientific theories outlined above and that of socio-political theories is that competition among  the latter plays out not on a single arena but on at least  {\em two arenas} simultaneously: that is, political parties (incarnating socio-political theories) compete with each other by testing how well their promises (phrased in terms of issues) score on different social groups, while at the same time, social groups (also incarnating socio-political theories) compete with each other by testing how well their demands score on political parties. 
The double-sidedness of this situation calls for a {\em multi-type} formal framework, both in respect to the language and the models. However, there is another interesting similarity between the socio-political case and the scientific case: as discussed above, the fact that databases are  theory-laden results in different degrees of confidence in the outcomes of tests of different hypotheses, depending on the degree of compatibility between their underlying theories; likewise, the fact that each social group has an underlying theory (captured by the set of issues which are relevant to that social group)  results in different degrees of confidence when the promises of different political parties are tested on different social groups, which again depends on the degree of compatibility between their underlying theories. Conversely and symmetrically, the fact that each political party has an underlying theory  results in different degrees of confidence when the demands of different social groups are tested on different political parties, which again depends on the degree of compatibility between their underlying theories.

\section{Preliminaries}
This section collects and modifies material from \cite[Section 2.1]{graph-based-wollic}, \cite[Section 7.2]{roughconcepts}, and \cite[Section 3]{eusflat}.
\subsection{Multi-type  nondistributive modal logic}
\label{sec:logics}
%The logic discussed below was introduced in \cite{conradie2016categories} as an instance of a logic to which a general methodology applies  for endowing lattice-based logics with relational semantics (cf.~\cite[Section 2]{CoPa:non-dist}). The semantics of this logic was  based on a restricted class of formal contexts. These restrictions were lifted in \cite{Tarkpaper}. 
%\paragraph{Basic logic.} 
Let $\Prop$ be a (countable or finite)  set of proposition variables. The language $\mathcal{L}_{\mathsc{MT}}$ of the  {\em multi-type  nondistributive modal logic} has terms of types $\SI, \PP$ defined as follows:
%\[ \mathsf{Fm}\ni \varphi ::= \bot \mid \top \mid p \mid  \varphi \wedge \varphi \mid \varphi \vee \varphi \mid \Box \varphi \mid  \Diamond\varphi,\] %\mid {\rhd}\phi \mid {\lhd}\phi \]%
\[ \SI\ni \sigma ::= \bot \mid \top \mid p \mid  \sigma \wedge \sigma\mid \sigma \vee \sigma \mid  \Diamond\pi,\]
\[ \PP\ni \pi ::= \bot \mid \top \mid p \mid  \pi \wedge \pi\mid \pi \vee \pi \mid  \lozenge\sigma,\]
where $p\in \Prop$. 
Intuitively, we create two copies of the same language, one in which formulas are intended as {\em social demands} $\sigma$ and one as {\em political promises} $\pi$. The two types are connected via heterogeneous modal operators, transforming social demands into political promises and vice versa. The {\em term-algebra} of this language is an example of {\em heterogeneous algebra}, a notion introduced by Birkhoff and Lipson \cite{birkhoff1970heterogeneous} naturally extending notions and results from universal algebra to a context in which algebras have more than one domain and operations can be defined not only within one and the same domain, but also between different domains.
\begin{definition}
\label{def:hetalgebra}
A {\em normal heterogeneous} $\mathcal{L}_{\mathsc{MT}}$-{\em algebra} is a tuple $(\mathbb{L}_S, \mathbb{L}_P, \lozenge, \Diamond)$ such that $\mathbb{L}_S$ and $\mathbb{L}_P$ are lattices (intended to interpret formulas of type $\SI$ and $\PP$, respectively), and $\lozenge: \mathbb{L}_S\to \mathbb{L}_P$ and $\Diamond: \mathbb{L}_P\to \mathbb{L}_S$ are normal (i.e.~$\bot$-preserving and $\vee$-preserving) modal operators. 
\end{definition}
The {\em basic multi-type normal} $\mathcal{L}_{\mathsc{MT}}$-{\em logic} is a set $\mathbf{L}$ of {\em type-uniform} $\mathcal{L}_{\mathsc{MT}}$-sequents $\phi\vdash\psi$  (i.e.~sequents with $\phi,\psi\in\SI$ or $\phi,\psi\in\PP$), containing the following axioms:
%\begin{itemize}
%	\item Sequents for propositional connectives:
{\small{
		\begin{align*}
			&p\vdash p, && \bot\vdash p, && p\vdash \top, & &  &\\
			&p\vdash p\vee q, && q\vdash p\vee q, && p\wedge q\vdash p, && p\wedge q\vdash q, &\\
			%& \top\vdash \Box \top, && \Box p\wedge \Box q \vdash \Box ( p\wedge q),
			& \Diamond \bot\vdash \bot, &&
			\Diamond ( \pi_1 \vee \pi_2) \vdash  \Diamond \pi_1 \vee \Diamond  \pi_2 &\\
			& \lozenge \bot\vdash \bot, &&
			 \lozenge ( \sigma_1 \vee \sigma_2)\vdash\lozenge \sigma_1 \vee \lozenge  \sigma_2  &
		\end{align*}
}}
%	\item Sequents for modal operators:
%		
%	\[\top\vdash \Box \top, \quad\quad     \Box p\wedge \Box q \vdash \Box ( p\wedge q), \quad\quad \Diamond \bot\vdash \bot, \quad\quad \Diamond p \vee \Diamond  q \vdash \Diamond ( p \vee q)\]		%	
%		
%	\end{itemize}
and closed under the following inference rules:
{\small{
		\begin{displaymath}
		\frac{\phi\vdash \chi\quad \chi\vdash \psi}{\phi\vdash \psi}
		\quad
		\frac{\phi\vdash \psi}{\phi\left(\chi/p\right)\vdash\psi\left(\chi/p\right)}
		\quad
		\frac{\chi\vdash\phi\quad \chi\vdash\psi}{\chi\vdash \phi\wedge\psi}
		\quad
		\frac{\phi\vdash\chi\quad \psi\vdash\chi}{\phi\vee\psi\vdash\chi}
		\end{displaymath}
		\begin{displaymath}
		\frac{\pi_1\vdash\pi_2}{\Diamond \pi_1\vdash \Diamond \pi_2}
%\frac{\phi\vdash\psi}{\Box \phi\vdash \Box \psi}
		\quad
		\frac{\sigma_1\vdash\sigma_2}{\lozenge \sigma_1\vdash \lozenge \sigma_2}
		\end{displaymath}
		%		\begin{displaymath}			\frac{\phi\vdash\psi}{\Box \phi\vdash \Box \psi}\quad\quad \frac{\phi\vdash\psi}{\Diamond \phi\vdash \Diamond \psi} \end{displaymath}
}}
%Thus, the modal fragment of $\mathbf{L}$ incorporates the viewpoints of individual agents into the syllogistic reasoning supported by the propositional fragment of $\mathbf{L}$.
An {\em $\mathcal{L}_{\mathsc{MT}}$-logic} is any  extension of $\mathbf{L}$  with type-uniform $\mathcal{L}_{\mathsc{MT}}$-sequents $\phi\vdash\psi$.
The next proposition can be shown via a routine Lindenbaum Tarski argument.
\begin{proposition}
\label{prop:completeness}
The basic logic $\mathbf{L}$ is sound and complete w.r.t.~the class of heterogeneous $\mathcal{L}_{\mathsc{MT}}$-algebras. 
\end{proposition}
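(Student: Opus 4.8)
The plan is to run the standard Lindenbaum--Tarski argument, adapted to the two-sorted signature. For soundness, recall that a valuation on a heterogeneous $\mathcal{L}_{\mathsc{MT}}$-algebra $(\mathbb{L}_S,\mathbb{L}_P,\lozenge,\Diamond)$ is a pair of maps $\Prop\to\mathbb{L}_S$ and $\Prop\to\mathbb{L}_P$; since the $\mathcal{L}_{\mathsc{MT}}$-term algebra is freely generated by $\Prop$ over this signature, each such pair extends uniquely to a heterogeneous homomorphism $\val{\cdot}^{V}$ sending $\SI$-terms into $\mathbb{L}_S$ and $\PP$-terms into $\mathbb{L}_P$, and a type-uniform sequent $\phi\vdash\psi$ is declared valid when $\val{\phi}^{V}\le\val{\psi}^{V}$ for every valuation $V$ in every such algebra. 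I would first check that each axiom is valid: the sequents in the first two displayed rows hold because $\le$ is a bounded-lattice order, and the four modal axioms hold because $\Diamond$ and $\lozenge$ are normal, hence $\bot$-preserving and $\vee$-preserving. Then I would check that each rule preserves validity: cut is transitivity of $\le$; the meet and join rules are the defining universal properties of $\wedge$ and $\vee$; the two monotonicity rules follow from $\vee$-preservation of $\Diamond,\lozenge$ (a $\vee$-preserving map between lattices is order-preserving); and uniform substitution is sound because precomposing a valuation with a substitution of terms for variables is again a valuation, by freeness of the term algebra. Since the set of valid type-uniform sequents contains all the axioms and is closed under all the rules, it contains $\mathbf{L}$.

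For completeness, define on the set of $\SI$-terms the relation $\sigma_1\equiv_S\sigma_2$ iff $\sigma_1\vdash\sigma_2$ and $\sigma_2\vdash\sigma_1$ both belong to $\mathbf{L}$, and define $\equiv_P$ analogously on $\PP$-terms. Using cut, the meet and join rules, and the two monotonicity rules, one shows that $\equiv_S$ together with $\equiv_P$ forms a congruence of the heterogeneous term algebra; in particular $\lozenge$ and $\Diamond$ are compatible with them across the two sorts. Let $\mathbb{L}_S$ and $\mathbb{L}_P$ be the corresponding quotient sets, each ordered by $[\sigma_1]\le[\sigma_2]$ iff $\sigma_1\vdash\sigma_2\in\mathbf{L}$, which is well defined and (using cut, $p\vdash p$, and the definition of $\equiv_S$) a partial order. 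The axioms in the first two rows together with the matching rules make $(\mathbb{L}_S,\le)$ and $(\mathbb{L}_P,\le)$ bounded lattices, with $[\bot]$ and $[\top]$ as least and greatest elements; the axioms $\Diamond\bot\vdash\bot$ and $\Diamond(\pi_1\vee\pi_2)\vdash\Diamond\pi_1\vee\Diamond\pi_2$, together with the converse inequalities obtained from $\bot\vdash p$ and from monotonicity, show that the induced operation $\Diamond\colon\mathbb{L}_P\to\mathbb{L}_S$ is normal, and symmetrically for $\lozenge\colon\mathbb{L}_S\to\mathbb{L}_P$. Hence $\mathbf{L}^{\ast}:=(\mathbb{L}_S,\mathbb{L}_P,\lozenge,\Diamond)$ is a heterogeneous $\mathcal{L}_{\mathsc{MT}}$-algebra.

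Finally, I would take the canonical valuation $V$ on $\mathbf{L}^{\ast}$ sending $p$ to the pair $([p]_{\equiv_S},[p]_{\equiv_P})$, and prove by induction on term structure that $\val{\sigma}^{V}=[\sigma]_{\equiv_S}$ for every $\sigma\in\SI$ and $\val{\pi}^{V}=[\pi]_{\equiv_P}$ for every $\pi\in\PP$, the modal steps using that the quotient maps are homomorphisms. Then, if $\phi\vdash\psi\notin\mathbf{L}$ we have $[\phi]\not\le[\psi]$, hence $\val{\phi}^{V}\not\le\val{\psi}^{V}$ and the sequent fails in $\mathbf{L}^{\ast}$; contrapositively, every valid type-uniform sequent is in $\mathbf{L}$, which gives completeness. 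The only place where the multi-type character genuinely intervenes — and the step I expect to need the most care — is verifying that $\equiv_S$ and $\equiv_P$ jointly form a congruence of the heterogeneous term algebra and that uniform substitution is handled coherently at positions of both types; the rest is the familiar propositional bookkeeping.
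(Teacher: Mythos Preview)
Your proposal is correct and follows exactly the approach the paper itself indicates: the paper's entire proof of this proposition is the sentence ``The next proposition can be shown via a routine Lindenbaum Tarski argument,'' and what you have written is precisely that routine argument carried out in detail for the two-sorted signature. There is nothing to add; your care about the heterogeneous congruence and cross-type substitution is exactly the bookkeeping the paper leaves implicit.
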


\subsection{Many-valued enriched formal contexts}
Throughout this paper, we let $\mathbf{A} = (D, 1, 0, \vee, \wedge, \otimes, \to)$ denote an arbitrary but fixed complete
 frame-distributive and dually frame-distributive, commutative and associative residuated lattice(understood as the algebra of truth-values). 
For every set $W$, an $\mathbf{A}$-{\em valued subset}  (or $\mathbf{A}$-{\em subset}) of $W$ is a map $u: W\to \mathbf{A}$.  We let $\mathbf{A}^W$ denote the set of all $\mathbf{A}$-subsets. Clearly, $\mathbf{A}^W$ inherits the algebraic structure of $\mathbf{A}$ by defining the operations and the order pointwise. The $\mathbf{A}$-{\em subsethood} relation between elements of $\mathbf{A}^W$ is the map $S_W:\mathbf{A}^W\times \mathbf{A}^W\to \mathbf{A}$ defined as $S_W(f, g) :=\bigwedge_{z\in W }(f(z)\rightarrow g(z)) $. For every $\alpha\in \mathbf{A}$, 
%%let $\{w\slash \alpha\}: W\to \mathbf{A}$ be defined by $v\mapsto \alpha$ if $v = w$ and $v\mapsto \top^{\mathbf{A}}$ if $v\neq w$, and 
let $\{\alpha/ w\}: W\to \mathbf{A}$ be defined by $v\mapsto \alpha$ if $v = w$ and $v\mapsto \bot^{\mathbf{A}}$ if $v\neq w$. Then, for every $f\in \mathbf{A}^W$,
\begin{equation}\label{eq:MV:join:generators}
f = \bigvee_{w\in W}\{f(w)/ w\}.
\end{equation}
 When $u, v: W\to \mathbf{A}$ and $u\leq v$ w.r.t.~the pointwise order, we write $u\subseteq v$. An $\mathbf{A}$-{\em valued relation} (or $\mathbf{A}$-{\em relation}) is a map $R: U \times W \rightarrow \mathbf{A}$. Two-valued relations can be regarded as  $\mathbf{A}$-relations. In particular for any set $Z$, we let $\Delta_Z: Z\times Z\to \mathbf{A}$ be defined by $\Delta_Z(z, z') = \top$ if $z = z'$ and $\Delta_Z(z, z') = \bot$ if $z\neq z'$. An $\mathbf{A}$-relation $R: Z\times Z\to \mathbf{A}$ is {\em reflexive} if $\Delta_Z \subseteq R$.
Any $\mathbf{A}$-valued relation $R: U \times W \rightarrow \mathbf{A}$ induces  maps $R^{(0)}[-] : \mathbf{A}^W \rightarrow \mathbf{A}^U$ and $R^{(1)}[-] : \mathbf{A}^U \rightarrow \mathbf{A}^W$ defined as follows: for every $f: U \to \mathbf{A}$ and every $u: W \to \mathbf{A}$,
\begin{center}
	\begin{tabular}{r l }
		$R^{(1)}[f]:$ & $ W\to \mathbf{A}$\\
		
		& $ x\mapsto \bigwedge_{a\in U}(f(a)\rightarrow R(a, x))$\\
		&\\	
		$R^{(0)}[u]: $ & $U\to \mathbf{A} $\\
		& $a\mapsto \bigwedge_{x\in W}(u(x)\rightarrow R(a, x))$\\
	\end{tabular}
\end{center}

A {\em formal}  $\mathbf{A}$-{\em context}\footnote{ In the crisp setting, a {\em formal context} \cite{ganter2012formal}, or {\em polarity},  is a structure $\mathbb{P} = (A, X, I)$ such that $A$ and $X$ are sets, and $I\subseteq A\times X$ is a binary relation. Every such $\mathbb{P}$ induces maps $(\cdot)^\uparrow: \mathcal{P}(A)\to \mathcal{P}(X)$ and $(\cdot)^\downarrow: \mathcal{P}(X)\to \mathcal{P}(A)$, respectively defined by the assignments $B^\uparrow: = I^{(1)}[B]$ and $Y^\downarrow: = I^{(0)}[Y]$. A {\em formal concept} of $\mathbb{P}$ is a pair 
	$c = (\val{c}, \descr{c})$ such that $\val{c}\subseteq A$, $\descr{c}\subseteq X$, and $\val{c}^{\uparrow} = \descr{c}$ and $\descr{c}^{\downarrow} = \val{c}$.   The set $L(\mathbb{P})$  of the formal concepts of $\mathbb{P}$ can be partially ordered as follows: for any $c, d\in L(\mathbb{P})$, \[c\leq d\quad \mbox{ iff }\quad \val{c}\subseteq \val{d} \quad \mbox{ iff }\quad \descr{d}\subseteq \descr{c}.\]
	With this order, $L(\mathbb{P})$ is a complete lattice, the {\em concept lattice} $\mathbb{P}^+$ of $\mathbb{P}$. Any complete lattice $\mathbb{L}$ is isomorphic to the concept lattice $\mathbb{P}^+$ of some polarity $\mathbb{P}$.} or $\mathbf{A}$-{\em polarity} (cf.~\cite{belohlavek}) is a structure $\mathbb{P} = (A, X, I)$ such that $A$ and $X$ are sets and $I: A\times X\to \mathbf{A}$. Any formal $\mathbf{A}$-context induces  maps $(\cdot)^{\uparrow}: \mathbf{A}^A\to \mathbf{A}^X$ and $(\cdot)^{\downarrow}: \mathbf{A}^X\to \mathbf{A}^A$ given by $(\cdot)^{\uparrow} = I^{(1)}[\cdot]$ and $(\cdot)^{\downarrow} = I^{(0)}[\cdot]$. 
%
%\begin{center}
%\begin{tabular}{r l c r l}
%$f^{\uparrow}:$ & $ X\to \mathbf{A}$& $\quad\quad $& $u^{\downarrow}: $ & $A\to \mathbf{A} $\\
%& $ x\mapsto \bigwedge_{a\in A}(f(a)\rightarrow I(a, x))$& $\quad\quad $&  & $a\mapsto \bigwedge_{x\in X}(u(x)\rightarrow I(a, x))$\\
%\end{tabular}
%\end{center}
These maps are such that, for every $f\in \mathbf{A}^A$ and every $u\in \mathbf{A}^X$,
\[S_A(f, u^{\downarrow}) = S_X(u, f^{\uparrow}),\]
that is, the pair of maps $(\cdot)^{\uparrow}$ and $(\cdot)^{\downarrow}$ form an $\mathbf{A}$-{\em Galois connection}.
In \cite[Lemma 5]{belohlavek}, it is shown that every  $\mathbf{A}$-Galois connection arises from some formal  $\mathbf{A}$-context. A {\em formal}  $\mathbf{A}$-{\em concept} of $\mathbb{P}$ is a pair $(f, u)\in \mathbf{A}^A\times \mathbf{A}^X$ such that $f^{\uparrow} = u$ and $u^{\downarrow} = f$. It follows immediately from this definition that if $(f, u)$ is a formal $\mathbf{A}$-concept, then $f^{\uparrow \downarrow} = f$ and $u^{\downarrow\uparrow} = u$, that is, $f$ and $u$ are {\em stable}. The set of formal $\mathbf{A}$-concepts can be partially ordered as follows:
\[(f, u)\leq (g, v)\quad \mbox{ iff }\quad f\subseteq g \quad \mbox{ iff }\quad v\subseteq u. \]
Ordered in this way, the set of the formal  $\mathbf{A}$-concepts of $\mathbb{P}$ is a complete lattice, which we denote $\mathbb{P}^+$. 

An {\em enriched formal $\mathbf{A}$-context} (cf.~\cite[Section 7.2]{roughconcepts}) is a structure  $\mathbb{F} = (\mathbb{P}, R_\Box, R_\Diamond)$ such that $\mathbb{P} = (A, X, I)$ is a formal  $\mathbf{A}$-context and $R_\Box: A\times X\to \mathbf{A}$ and $R_\Diamond: X\times A\to \mathbf{A}$ are $I$-{\em compatible}, i.e.~$R_{\Box}^{(0)}[\{\alpha / x\}]$, $R_{\Box}^{(1)}[\{\alpha / a\}]$,  $R_{\Diamond}^{(0)}[\{\alpha / a\}]$ and $R_{\Diamond}^{(1)}[\{\alpha / x\}]$ are stable for every $\alpha \in \mathbf{A}$, $a \in A$ and $x \in X$.   
The {\em complex algebra} of an  enriched formal $\mathbf{A}$-context $\mathbb{F} = (\mathbb{P}, R_\Box, R_\Diamond)$ is the algebra $\mathbb{F}^{+} = (\mathbb{P}^{+}, [R_{\Box}], \langle R_{\Diamond} \rangle )$ where $[R_{\Box}], \langle R_{\Diamond} \rangle : \mathbb{P}^{+} \to \mathbb{P}^{+}$ are defined by the following assignments: for every $c = (\val{c}, \descr{c}) \in \mathbb{P}^{+}$, 
\begin{center}
	\begin{tabular}{l cl}
		$[R_{\Box}]c$ & $ =$&$  (R_{\Box}^{(0)}[\descr{c}], (R_{\Box}^{(0)}[\descr{c}])^{\uparrow})$\\
		$ \langle R_{\Diamond} \rangle c $ & $ =$&$  ((R_{\Diamond}^{(0)}[\val{c}])^{\downarrow}, R_{\Diamond}^{(0)}[\val{c}])$.\\
	\end{tabular}
\end{center}

\begin{lemma}\label{prop:fplus}
	(cf.~\cite[Lemma 15]{roughconcepts}) If $\mathbb{F} = (\mathbb{X}, R_{\Box}, R_{\Diamond})$	is an enriched formal   $\mathbf{A}$-context, $\mathbb{F}^+ = (\mathbb{X}^+, [R_{\Box}], \langle R_{\Diamond}\rangle)$ is a complete  normal lattice expansion such that $[R_\Box]$ is completely meet-preserving and $\langle R_\Diamond\rangle$ is completely join-preserving.
\end{lemma}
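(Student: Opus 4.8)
The plan is to verify directly the three things the statement amounts to: that $[R_\Box]$ and $\langle R_\Diamond\rangle$ are well-defined endomaps of $\mathbb{X}^+$ (which is already known to be a complete lattice), that $[R_\Box]$ preserves arbitrary meets, and that $\langle R_\Diamond\rangle$ preserves arbitrary joins; together with the ambient complete lattice $\mathbb{X}^+$ this is precisely the assertion that $\mathbb{F}^+$ is a complete normal lattice expansion with the stated preservation properties. Write $\mathbb{X} = (A, X, I)$. Two elementary observations will be used repeatedly: (i) for any $\mathbf{A}$-relation $R$, the map $R^{(0)}[-]$ is antitone and turns arbitrary joins into arbitrary meets, since $(\bigvee_i \alpha_i)\to\beta = \bigwedge_i(\alpha_i\to\beta)$ holds in $\mathbf{A}$ and the operations of $\mathbf{A}^W$ are pointwise; and (ii) the stable elements of $\mathbf{A}^A$ (resp. of $\mathbf{A}^X$) are the fixpoints of the closure operator $(\cdot)^{\uparrow\downarrow}$ (resp. $(\cdot)^{\downarrow\uparrow}$), and hence are closed under arbitrary meets.

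First I would check well-definedness. Using \eqref{eq:MV:join:generators} to write $\descr c = \bigvee_{x\in X}\{\descr c(x)/x\}$ and then observation (i), $R_\Box^{(0)}[\descr c] = \bigwedge_{x\in X} R_\Box^{(0)}[\{\descr c(x)/x\}]$; each term is stable because $R_\Box$ is $I$-compatible, so by (ii) $R_\Box^{(0)}[\descr c]$ is stable and $[R_\Box]c = (R_\Box^{(0)}[\descr c], (R_\Box^{(0)}[\descr c])^\uparrow)$ is a genuine formal $\mathbf{A}$-concept. Symmetrically, writing $\val c = \bigvee_{a\in A}\{\val c(a)/a\}$ and invoking the $I$-compatibility of $R_\Diamond$ shows $R_\Diamond^{(0)}[\val c]$ is stable, so $\langle R_\Diamond\rangle c\in\mathbb{X}^+$.

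The main step, and the one I expect to carry all the difficulty, is the lemma that $R_\Box^{(0)}[-]$ does not ``see'' the Galois closure, i.e. $R_\Box^{(0)}[u] = R_\Box^{(0)}[u^{\downarrow\uparrow}]$ for every $u\in\mathbf{A}^X$, and dually $R_\Diamond^{(0)}[v] = R_\Diamond^{(0)}[v^{\uparrow\downarrow}]$ for every $v\in\mathbf{A}^A$. The inclusion $R_\Box^{(0)}[u^{\downarrow\uparrow}]\subseteq R_\Box^{(0)}[u]$ is immediate from antitonicity, since $u\subseteq u^{\downarrow\uparrow}$. For the converse I would introduce the auxiliary map $T\colon\mathbf{A}^A\to\mathbf{A}^X$, $T[f](x) := \bigwedge_{a\in A}(f(a)\to R_\Box(a,x))$, and verify by residuation (commutativity of $\otimes$ together with $\gamma\otimes\beta\le\delta \iff \beta\le\gamma\to\delta$) that $f\subseteq R_\Box^{(0)}[u]$ iff $u\subseteq T[f]$, so that $(R_\Box^{(0)}[-], T[-])$ is a Galois connection. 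Taking $f := R_\Box^{(0)}[u]$ gives $u\subseteq T[f]$; moreover $T[\{\alpha/a\}] = R_\Box^{(1)}[\{\alpha/a\}]$, so by \eqref{eq:MV:join:generators} and the join-to-meet behaviour of $T$ one gets $T[f] = \bigwedge_{a\in A} R_\Box^{(1)}[\{f(a)/a\}]$, which is a meet of stable elements by the $I$-compatibility of $R_\Box$, hence is closed; therefore $u^{\downarrow\uparrow}\subseteq T[f]^{\downarrow\uparrow} = T[f]$, and applying the Galois connection once more yields $R_\Box^{(0)}[u] = f\subseteq R_\Box^{(0)}[u^{\downarrow\uparrow}]$. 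The statement for $R_\Diamond$ is entirely dual. This is exactly the point at which both halves of $I$-compatibility are genuinely needed, and the careful bookkeeping with the $\{\alpha/x\}$-generators and with the $\otimes$/$\to$ adjunction is the only really technical part; alternatively one may transcribe the crisp computation of \cite[Lemma 15]{roughconcepts}.

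Finally I would assemble the preservation statements. Recall that in the concept lattice $\mathbb{X}^+$, for any family $\{c_i\}_i$ (the empty family included) the meet $\bigwedge_i c_i$ has extent $\bigwedge_i\val{c_i}$ and, since $(\cdot)^\downarrow$ turns joins into meets, intent $(\bigwedge_i\val{c_i})^\uparrow = (\bigvee_i\descr{c_i})^{\downarrow\uparrow}$; hence the extent of $[R_\Box](\bigwedge_i c_i)$ is $R_\Box^{(0)}[(\bigvee_i\descr{c_i})^{\downarrow\uparrow}]$, which by the key lemma equals $R_\Box^{(0)}[\bigvee_i\descr{c_i}] = \bigwedge_i R_\Box^{(0)}[\descr{c_i}]$, i.e. the extent of $\bigwedge_i[R_\Box]c_i$; since a formal $\mathbf{A}$-concept is determined by its extent, $[R_\Box](\bigwedge_i c_i) = \bigwedge_i[R_\Box]c_i$. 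Dually, $\bigvee_i c_i$ has intent $\bigwedge_i\descr{c_i}$ and extent $(\bigvee_i\val{c_i})^{\uparrow\downarrow}$, so the intent of $\langle R_\Diamond\rangle(\bigvee_i c_i)$ is $R_\Diamond^{(0)}[(\bigvee_i\val{c_i})^{\uparrow\downarrow}] = R_\Diamond^{(0)}[\bigvee_i\val{c_i}] = \bigwedge_i R_\Diamond^{(0)}[\val{c_i}]$, which is the intent of $\bigvee_i\langle R_\Diamond\rangle c_i$, and a concept being determined by its intent we conclude $\langle R_\Diamond\rangle(\bigvee_i c_i) = \bigvee_i\langle R_\Diamond\rangle c_i$. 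This shows $[R_\Box]$ is completely meet-preserving and $\langle R_\Diamond\rangle$ is completely join-preserving — in particular both are normal — completing the verification that $\mathbb{F}^+$ is a complete normal lattice expansion of the required kind.
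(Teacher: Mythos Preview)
Your proof is correct and complete: the decomposition into $\{\alpha/w\}$-generators together with the join-to-meet behaviour of $R^{(0)}[-]$, the Galois connection between $R_\Box^{(0)}[-]$ and $R_\Box^{(1)}[-]$, and the closure of stable elements under meets are exactly the ingredients needed, and you assemble them correctly. The paper does not supply its own proof here but simply defers to \cite[Lemma~15]{roughconcepts}, so there is no in-paper argument to compare against; your approach is the standard one and presumably coincides with the cited reference.
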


\subsection{Many-valued  graphs}
A reflexive   $\mathbf{A}$-{\em graph} is a structure $\X = (Z, E)$ such that $Z$ is a nonempty set and $E$ is a {\em reflexive $\mathbf{A}$-relation}, i.e.~$E:Z\times Z\to \mathbf{A}$ and $R(z, z) = \top^\mathbf{A}$ for every $z\in Z$.  From now on, we will assume that all $\mathbf{A}$-graphs we consider are reflexive even when we drop the adjective.  %We sketch the adjunction between graphs and complete lattices, and refer the reader to \cite{CGH15} for a more general treatment. 

In what follows, for any set $S$ we let $S_A := \mathbf{A}\times S$ and $S_X :=  S$.
Any  $R: S\times W\to \mathbf{A}$  admits the following liftings:
\begin{center}
	\begin{tabular}{r l }
$I_R:$ & $ S_A\times W_X \to \mathbf{A}$\\
		
		& $ ((\alpha, s), w)\mapsto R(s, w)\rightarrow \alpha$\\ 
	&\\	
$J_R:$ & $ S_X\times W_A \to \mathbf{A}$\\
		
		& $ (s, (\alpha, w))\mapsto R(s, w)\rightarrow \alpha$\\ 
	\end{tabular}
\end{center}
Applying these constructions to $\mathbf{A}$-graphs we get:
\begin{definition}
\label{def:A-lifting of a graph}
For any reflexive $\mathbf{A}$-graph $\X = (Z, E)$,  the formal $\mathbf{A}$-context associated with $\X$ is
\[\mathbb{P_X}: = (Z_A, Z_X, I_E),\]
where $Z_A := \mathbf{A}\times Z$ and $Z_X :=  Z$, and  $I_E: Z_A\times Z_X\to \mathbf{A}$ is defined by $I_E((\alpha, z), z') = E(z, z')\rightarrow \alpha$. We let $\X^+: = \mathbb{P_X}^+$.\end{definition}

For all $f: \mathbf{A}\times Z\to \mathbf{A}$, and $u: Z\to \mathbf{A}$, we let
\begin{center}
	\begin{tabular}{ll}
$u^{[0]} = E^{[0]}[u]:$ & $\mathbf{A}\times Z\to \mathbf{A}$\\
& $(\alpha, z)\mapsto I_E^{(0)}[u](\alpha, z) = u^{\downarrow}(\alpha, z)$\\
&\\
$f^{[1]} = E^{[1]}[f]:$ & $Z\to \mathbf{A}$\\
& $z\mapsto I_E^{(1)}[f](z) = f^{\uparrow}(z)$\\
\end{tabular}
	\end{center}
where the maps\footnote{\label{footnote: abbreviations} We will abbreviate $E^{[0]}[u]$ and $E^{[1]}[f]$ as $u^{[0]}$ and $f^{[1]}$, respectively, for each $u, f$ as above, and write $u^{[01]}$ and $f^{[10]}$ for $(u^{[0]})^{[1]}$ and $(f^{[1]})^{[0]}$, respectively. Then $u^{[0]} = I_{E}^{(0)}[u] = u^{\downarrow}$ and $f^{[1]} = I_{E}^{(1)}[f] = f^{\uparrow}$, where the maps $(\cdot)^\downarrow$ and $(\cdot)^\uparrow$ are those associated with the polarity $\mathbb{P_X}$.} 
$f^{\uparrow}:  Z\to \mathbf{A}$ and $u^{\downarrow}: \mathbf{A}\times Z\to \mathbf{A}$ are respectively defined by the assignments \[ z\mapsto \bigwedge_{(\alpha, z')\in Z_A}[ f(\alpha, z')\to (E(z', z)\to\alpha)]\]  \[(\alpha, z)\mapsto \bigwedge_{z'\in Z_X}[ u(z')\to (E(z, z')\to\alpha)].\] 

Hence, for any $z\in Z$ and $\alpha\in \mathbf{A}$,
	\begin{center}
	\begin{tabular}{l}
	$E^{[0]}[u](\alpha, z): = \bigwedge_{z'\in Z_X}[u(z')\to (E(z, z') \to \alpha)]$\\
	%$= \bigwedge_{(\alpha, z')\in Z_X}[\left(\bigwedge_{z''\in Z}[ f(z'')\to (E(z', z'')\to\alpha]\right)\to (R(z, z') \to \alpha)]$\\
	$E^{[1]}[f](z): = \bigwedge_{(\alpha, z')\in Z_A}[f(\alpha, z')\to  (E(z', z) \to \alpha)]$.\\
\end{tabular}
	\end{center}
%For all $f, u: Z\to \mathbf{A}$, we let\footnote{\label{footnote: abbreviations} Applying this notation to a graph $\X = (Z, E)$, we will abbreviate $E^{[0]}[u]$ and $E^{[1]}[f]$ as $u^{[0]}$ and $f^{[1]}$, respectively, for each $u, f$ as above,  %If $Y= \{y\}$ and  $B = \{b\}$, we write $y^{[0]}$ and $b^{[1]}$ for $\{y\}^{[0]}$ and $\{b\}^{[1]}$,	and write $u^{[01]}$ and $f^{[10]}$ for $(u^{[0]})^{[1]}$ and $(f^{[1]})^{[0]}$, respectively. Then $u^{[0]} = I_{E}^{(0)}[u] = u^{\downarrow}$ and $f^{[1]} = I_{E}^{(1)}[f] = f^{\uparrow}$, where the maps $(\cdot)^\downarrow$ and $(\cdot)^\uparrow$ are those associated with the polarity $\mathbb{P_X}$.} 
%\begin{center}
%	\begin{tabular}{ll}
%		$R^{[0]}[u]:$ & $Z\to \mathbf{A}$\\
%		& $z\mapsto I_R^{(0)}[u](z) = J_R^{(1)}[u](z)$\\
%		&\\
%		$R^{[1]}[f]:$ & $Z\to \mathbf{A}$\\
%		& $z\mapsto I_R^{(1)}[f](z) = J_R^{(0)}[u](z)$\\
%	\end{tabular}
%\end{center}
%Hence, for any $z\in Z$,
%\begin{center}
%	\begin{tabular}{l}
%		$R^{[0]}[u](z): = \bigwedge_{z'\in Z}((u(z')\otimes R(z, z'))\rightarrow 0)$\\
%		$R^{[1]}[f](z): = \bigwedge_{z'\in  Z}((f(z')\otimes R(z', z))\rightarrow 0)$.\\
%	\end{tabular}
%\end{center}

\section{Many-valued heterogeneous frames}
\begin{definition}\label{def:graph:based:frame:and:model}
If $\mathcal{L}_{\mathsc{MT}}$ denotes the multi-type language defined in Section \ref{sec:logics}, a {\em many-valued graph-based $\mathcal{L}_{\mathsc{MT}}$-frame} (abbreviated as {\em heterogeneous} $\mathbf{A}$-{\em frame}) is a structure  $\mathbb{G} = (\X_S, \X_P, R_\Diamond, R_\lozenge)$ such that $\X_S = (Z^S, E_S)$ and $\X_P = (Z^P, E_P)$ are reflexive  $\mathbf{A}$-graphs, and $R_\lozenge: Z^S\times Z^P\to \mathbf{A}$ and $R_\Diamond: Z^P\times Z^S\to \mathbf{A}$ satisfy the following {\em compatibility} conditions:\footnote{In what follows, we drop the indices whenever a property, notion or construction applies verbatim to both domains or when disambiguation can be achieved with other means. For instance, symbols such as $(\cdot)^{[0]}$ and $(\cdot)^{[1]}$ will never occur with indices, since the type of the argument is enough to disambiguate them.}
%	A {\em graph-based $\mathbf{A}$-frame}  is a structure $\mathbb{G} = (\mathbb{X},  R_{\Diamond}, R_{\Box})$ where $\mathbb{X} = (Z,E)$ is a reflexive $\mathbf{A}$-graph, and  $R_{\Diamond}$ and $R_{\Box}$  are binary $\mathbf{A}$-relations on $Z$ such that the structure $\mathbb{F_G}: = (\mathbb{P_X}, I_{R_\Box}, J_{R_\Diamond})$ is an enriched formal $\mathbf{A}$-context. That is, $R_{\Diamond}$ and $R_{\Box}$ satisfy the following  $E$-{\em compatibility} conditions: 
for any $z\in Z^S$, $z'\in Z^P$ and $\alpha, \beta\in\mathbf{A}$,
	\begin{center}
		\begin{tabular}{r c l }
		$(R_\Diamond^{[0]}[\{\beta / (\alpha, z') \}])^{[01]}$&$ \subseteq $&$ R_\Diamond^{[0]}[\{\beta /  (\alpha, z') \}] $\\
			$    (R_\Diamond^{[1]}[\{\beta /  z' \}])^{[10]} $&$ \subseteq $&$  R_\Diamond^{[1]}[\{\beta / z' \}]$\\
			$(R_\lozenge^{[0]}[\{\beta / (\alpha, z) \}])^{[01]} $&$ \subseteq $&$ R_\lozenge^{[0]}[\{\beta /  (\alpha, z) \}] $ \\
			$  (R_\lozenge^{[1]}[\{\beta /  z \}])^{[10]} $&$ \subseteq $&$  R_\lozenge^{[1]}[\{\beta /  z \}]$.\\
		\end{tabular}
	\end{center}
	
where for all $f: Z^S_A\to \mathbf{A}$ and $u: Z^P_X\to \mathbf{A}$,
\begin{center}
	\begin{tabular}{ll}
$R_{\Diamond}^{[0]}[f]:$ & $Z^P_X\to \mathbf{A}$\\
& $z\mapsto  J_{R_\Diamond}^{(0)}[f]( z)$\\
&\\	
$R_{\Diamond}^{[1]}[u]:$ & $Z^S_A\to \mathbf{A}$\\
& $(\alpha, z)\mapsto  J_{R_\Diamond}^{(1)}[u](\alpha, z)$,\\
\end{tabular}
	\end{center}
and for all $f: Z^P_A\to \mathbf{A}$ and $u: Z^S_X\to \mathbf{A}$,
\begin{center}
	\begin{tabular}{ll}
$R_{\lozenge}^{[0]}[f]:$ & $Z^S_X\to \mathbf{A}$\\
& $z\mapsto  J_{R_\lozenge}^{(0)}[f]( z)$\\
&\\	
$R_{\lozenge}^{[1]}[u]:$ & $Z^P_A\to \mathbf{A}$\\
& $(\alpha, z)\mapsto  J_{R_\lozenge}^{(1)}[u](\alpha, z)$.\\
\end{tabular}
	\end{center}
	Hence, for any $z\in Z^P$, $w\in Z^S$ and $\alpha\in \mathbf{A}$,
	\begin{center}
	\begin{tabular}{l}
$R_{\Diamond}^{[0]}[f](z): = \bigwedge_{(\alpha, z')\in Z^S_A}[f(\alpha, z')\to  (R_{\Diamond}(z, z') \to \alpha)]$\\
	$R_{\Diamond}^{[1]}[u](\alpha, w): = \bigwedge_{z'\in Z^P_X}[u(z')\to (R_{\Diamond}(z', w) \to \alpha)]$,\\
	%$= \bigwedge_{(\alpha, z')\in Z_X}[\left(\bigwedge_{z''\in Z}[ f(z'')\to (E(z', z'')\to\alpha]\right)\to (R(z, z') \to \alpha)]$\\
	\end{tabular}
	\end{center}	
and for any $z\in Z^S$, $w\in Z^P$ and $\alpha\in \mathbf{A}$,
	\begin{center}
	\begin{tabular}{l}
$R_{\lozenge}^{[0]}[f](z): = \bigwedge_{(\alpha, z')\in Z^P_A}[f(\alpha, z')\to  (R_{\lozenge}(z, z') \to \alpha)]$\\
	$R_{\lozenge}^{[1]}[u](\alpha, w): = \bigwedge_{z'\in Z^S_X}[u(z')\to (R_{\lozenge}(z', w) \to \alpha)]$.\\
	%$= \bigwedge_{(\alpha, z')\in Z_X}[\left(\bigwedge_{z''\in Z}[ f(z'')\to (E(z', z'')\to\alpha]\right)\to (R(z, z') \to \alpha)]$\\
	\end{tabular}
	\end{center}	
	
	The {\em complex algebra} of a heterogeneous $\mathbf{A}$-frame $\mathbb{G}$ as above is the heterogeneous algebra $\mathbb{G}^+ = (\mathbb{X}_S^+,\mathbb{X}_P^+, \langle R_\lozenge\rangle, \langle R_\Diamond\rangle)$,
	where $\mathbb{X}_S^+: = \mathbb{P}_{\mathbb{X}_S}^+$ and $\mathbb{X}_P^+: = \mathbb{P}_{\mathbb{X}_P}^+$ (cf.~Definition \ref{def:A-lifting of a graph}), and $\langle R_\lozenge\rangle: \mathbb{X}_P^+\to \mathbb{X}_S^+$ and $\langle R_\Diamond\rangle: \mathbb{X}_S^+\to \mathbb{X}_P^+$ are heterogeneous operations of $\mathbb{G}^+$ defined as follows: for every $c = (\val{c}, \descr{c}) \in \mathbb{X}_S^+$ and $d = (\val{d}, \descr{d}) \in \mathbb{X}_P^+$,
	\begin{center}
		\begin{tabular}{l cl}
					$ \langle R_\Diamond\rangle c$ & $ =$&$  ((R_{\Diamond}^{[0]}[\val{c}])^{[0]}, R_{\Diamond}^{[0]}[\val{c}])$\\
						$\langle R_\lozenge\rangle d$ & $ =$&$   ((R_{\lozenge}^{[0]}[\val{d}])^{[0]}, R_{\lozenge}^{[0]}[\val{d}]) $.\\

		\end{tabular}
	\end{center}
\end{definition}

With a proof analogous to that of Lemma \ref{prop:fplus}, one can readily show that
\begin{lemma}
	If $\mathbb{G} = (\X_S, \X_P, R_\Diamond, R_\lozenge)$	is a  heterogeneous $\mathbf{A}$-frame, $\mathbb{G}^+ = (\mathbb{X}_S^+,\mathbb{X}_P^+, \langle R_\lozenge\rangle, \langle R_\Diamond\rangle)$ is such that $\mathbb{X}_S^+$ and $\mathbb{X}_P^+$ are complete lattices, and   $\langle R_\lozenge\rangle$  and $\langle R_\Diamond\rangle$ are completely join-preserving.
\end{lemma}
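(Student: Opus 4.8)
The plan is to reduce the statement to Lemma \ref{prop:fplus} by recognizing that each heterogeneous operation $\langle R_\lozenge\rangle$ and $\langle R_\Diamond\rangle$ is, up to the identification of the graph-based contexts $\mathbb{P}_{\mathbb{X}_S}$ and $\mathbb{P}_{\mathbb{X}_P}$ with ordinary enriched formal $\mathbf{A}$-contexts, an instance of the $\langle R_\Diamond\rangle$-type operation treated there. First I would observe that the two clauses on graphs in Section 2.4 (the liftings $I_R, J_R$, the compatibility conditions, and the definition of $R_\lozenge^{[0]}[-]$, $R_\Diamond^{[0]}[-]$) are exactly the graph-based specializations of the formal-context notions of Section 2.2: the compatibility conditions displayed in Definition \ref{def:graph:based:frame:and:model} are precisely the $I$-compatibility conditions for $R_\Diamond$ and $R_\lozenge$ viewed as relations between the object/attribute sets of $\mathbb{P}_{\mathbb{X}_S}$ and $\mathbb{P}_{\mathbb{X}_P}$. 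Hence a heterogeneous $\mathbf{A}$-frame $\mathbb{G}$ gives rise to a pair of (heterogeneous) enriched formal $\mathbf{A}$-contexts to which the machinery behind Lemma \ref{prop:fplus} applies verbatim.

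Concretely, I would carry out the argument in the following steps. Step one: show that for every $d = (\val{d},\descr{d}) \in \mathbb{X}_P^+$ the pair $\langle R_\lozenge\rangle d = ((R_\lozenge^{[0]}[\val{d}])^{[0]}, R_\lozenge^{[0]}[\val{d}])$ is a genuine formal $\mathbf{A}$-concept of $\mathbb{P}_{\mathbb{X}_S}$, i.e.\ that $R_\lozenge^{[0]}[\val{d}]$ is stable and $(R_\lozenge^{[0]}[\val{d}])^{[0]}$ is its $(\cdot)^{[1]}$-preimage; this uses the compatibility conditions together with the representation \eqref{eq:MV:join:generators} of an arbitrary $\mathbf{A}$-subset as a join of singletons $\{\alpha/w\}$, exactly as in the proof of Lemma \ref{prop:fplus}. (Note $\val{d}$ is already stable, being the extent of a concept, so one only needs the compatibility conditions for the singletons.) Step two: show $\langle R_\lozenge\rangle$ is monotone and preserves arbitrary joins of $\mathbb{X}_P^+$. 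Here I would use that joins in a concept lattice $\mathbb{P}^+$ are computed as $\bigvee_i (\val{c_i},\descr{c_i}) = ((\bigcup_i \descr{c_i})^{\downarrow\uparrow}\text{-side})$, more usefully that the extent of a join is the closure of the union of extents, combined with the fact that $R_\lozenge^{[0]}[-]$ turns joins (unions, pointwise) of $\mathbf{A}$-subsets into meets — being of the form $a \mapsto \bigwedge(\,\cdot \to \cdot\,)$ — and that $(\cdot)^{[0]}$ likewise turns joins into meets; the two contravariant passages compose to something covariant and join-preserving, again mirroring the corresponding computation for $\langle R_\Diamond\rangle$ in Lemma \ref{prop:fplus}. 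Step three: repeat Steps one and two with the roles of $S$ and $P$ interchanged to handle $\langle R_\Diamond\rangle: \mathbb{X}_S^+ \to \mathbb{X}_P^+$; the argument is literally the same by the symmetry of the definitions. Finally, $\mathbb{X}_S^+$ and $\mathbb{X}_P^+$ are complete lattices by the general theory of formal $\mathbf{A}$-concept lattices recalled in Section 2.2 (and Definition \ref{def:A-lifting of a graph}), which closes the proof.

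The only mild subtlety — and the step I would treat most carefully — is making precise the claim ``with a proof analogous to that of Lemma \ref{prop:fplus}'': one must check that passing from the crisp-looking graph liftings $I_E, J_{R_\Diamond}, J_{R_\lozenge}$ to honest $\mathbf{A}$-contexts does not introduce any gap, in particular that the compatibility conditions stated for $R_\Diamond, R_\lozenge$ on the graph side genuinely coincide with the $I$-compatibility conditions required in the definition of an enriched formal $\mathbf{A}$-context when the underlying polarities are $\mathbb{P}_{\mathbb{X}_S}$ and $\mathbb{P}_{\mathbb{X}_P}$. Since the footnotes in Section 2.4 already fix the abbreviations $(\cdot)^{[0]}, (\cdot)^{[1]}$ and identify them with $(\cdot)^\downarrow, (\cdot)^\uparrow$ for the relevant polarities, this is essentially bookkeeping; once it is in place, Lemma \ref{prop:fplus} applies to each of $\langle R_\lozenge\rangle$ and $\langle R_\Diamond\rangle$ and yields the two completeness and complete-join-preservation claims. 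I do not expect any genuine obstacle beyond this identification; the residuated-lattice hypotheses on $\mathbf{A}$ (frame-distributivity on both sides, commutativity, associativity) are exactly what guarantees the join$\to$meet behaviour of $(\cdot)^{(0)}[-]$ and $(\cdot)^{(1)}[-]$ used throughout.
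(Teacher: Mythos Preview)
Your proposal is correct and follows exactly the approach the paper indicates: the paper itself does not spell out a proof but simply states that the argument is analogous to that of Lemma~\ref{prop:fplus}, which is precisely the reduction you describe. Your elaboration of how the graph-based liftings and compatibility conditions match the $I$-compatibility conditions of enriched formal $\mathbf{A}$-contexts is the bookkeeping that makes the word ``analogous'' precise.
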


\section{Many-valued heterogeneous models} \label{sec:interpretation}
Let $\mathcal{L}_{\mathsc{MT}}$ be the language of Section \ref{sec:logics}.  %denote the $\{\Box, \Diamond\}$ lattice-based modal language  over a  set $\mathsf{AtProp}$ of atomic propositions.
%The discussion and results above justify the introduction of the following many-valued semantic framework for the modal logic of concepts:
\begin{definition}
	A {\em graph-based}  $\mathbf{A}$-{\em model} of $\mathcal{L}_{\mathsc{MT}}$  is a tuple $\mathbb{M} = (\mathbb{G}, V)$ such that $\mathbb{G} = (\X_S, \X_P, R_\Diamond, R_\lozenge)$	is a  heterogeneous $\mathbf{A}$-frame, and $V : \mathcal{L}\to \mathbb{G}^+$ is a homomorphism of heterogeneous algebras, i.e.~$V$ is a pair of homomorphisms\footnote{Notice the inversion: formulas of type $\SI$ (social demands) are evaluated (tested) on the $P$-side of the model, i.e.~on political parties, and conversely, political promises are evaluated on social groups. Hence, the complex algebra $\mathbb{G}^+ = (\mathbb{X}_S^+,\mathbb{X}_P^+, \langle R_\lozenge\rangle, \langle R_\Diamond\rangle)$ of the underlying frame of a model is a heterogeneous $\mathcal{L}_{\mathrm{MT}}$-algebra in the sense that $\mathbb{L}_P: = \mathbb{X}_S^+$ and $\mathbb{L}_S := \mathbb{X}_P^+$, and moreover $\Diamond:= \langle R_\Diamond\rangle: \mathbb{L}_P\to \mathbb{L}_S$ and $\lozenge: = \langle R_\lozenge\rangle: \mathbb{L}_S\to \mathbb{L}_P$.} $V_S: \SI\to \mathbb{X}_P^+$ and $V_P: \PP\to \mathbb{X}_S^+$. For every $\phi\in \mathcal{L}_{\mathsc{MT}}$, let $V(\phi): = (\val{\phi}, \descr{\phi})$, where $\val{\phi}: \mathbf{A}\times Z\to \mathbf{A}$ and $\descr{\phi}: Z\to \mathbf{A}$, with $Z$ being the domain of the appropriate type, are s.t.~$\val{\phi}^{[1]} = \descr{\phi}$ and $\descr{\phi}^{[0]} = \val{\phi}$. Hence:
	%
	%every $V$ as above has a unique homomorphic extension, also denoted $V: \mathcal{L} \to \mathbb{G}^+$, defined as follows:
	\begin{center}
		\begin{tabular}{r c l}
			$V(p)$ & = & $(\val{p}, \descr{p})$\\
			$V(\top)$ & = & $(1^{\mathbf{A}^{Z_A}}, (1^{\mathbf{A}^{Z_A}})^{[1]})$\\
			$V(\bot)$ & = & $((1^{\mathbf{A}^{Z_X}})^{[0]}, 1^{\mathbf{A}^{Z_X}})$\\
			$V(\phi\wedge \psi)$ & = & $(\val{\phi}\wedge\val{\psi}, (\val{\phi}\wedge\val{\psi})^{[1]})$\\
			$V(\phi\vee \psi)$ & = & $((\descr{\phi}\wedge\descr{\psi})^{[0]}, \descr{\phi}\wedge\descr{\psi})$\\
			$V(\lozenge\sigma)$ & = & $((R_{\lozenge}^{[0]}[\val{\sigma}])^{[0]}, R^{[0]}_\lozenge[\val{\sigma}])$.\\
			$V(\Diamond\pi)$ & = & $((R^{[0]}_\Diamond[\val{\pi}])^{[0]}, R^{[0]}_\Diamond[\val{\pi}])$.\\
		\end{tabular}
	\end{center}
	Valuations induce  $\alpha$-{\em support relations} between value-state pairs and formulas of the appropriate type for each $\alpha\in \mathbf{A}$ (in symbols: $\mathbb{M}, (\beta, z)\Vdash^\alpha \phi$), and $\alpha$-{\em refutation relations} between states of models and formulas for each $\alpha\in \mathbf{A}$ (in symbols: $\mathbb{M},  z\succ^\alpha \phi$) such that for every $\phi\in \mathcal{L}_{\mathsc{MT}}$, all $z\in Z$ and all $\beta\in \mathbf{A}$,
\begin{center}
	\begin{tabular}{r c l}
$\mathbb{M}, (\beta, z)\Vdash^\alpha \phi$ & iff  &$ \alpha\leq \val{\phi}(\beta, z),$\\
$\mathbb{M},z \succ^\alpha \phi $ & iff  &$ \alpha\leq \descr{\phi}( z).$\\
\end{tabular}
\end{center}

This can be equivalently expressed as follows:
{\small{
\begin{center}
\begin{tabular}{l c l}
$\mathbb{M}, (\beta, z)\Vdash^\alpha p$ & iff & $\alpha\leq \val{p}(\beta, z)$;\\
$\mathbb{M}, (\beta, z)\Vdash^\alpha \top$ & iff & $\alpha\leq 1^{\mathbf{A}^{Z_A}}(\beta, z)$ i.e.~always;\\
$\mathbb{M}, (\beta, z)\Vdash^\alpha \bot$ & iff & $\alpha\leq (1^{\mathbf{A}^{Z_X}})^{[0]} (\beta, z)$\\
&& $ =\bigwedge_{z'\in Z_X}[1^{\mathbf{A}^{Z_X}}(z')\to (E(z, z') \to \beta)]$\\
&& $ =\bigwedge_{z'\in Z_X}[E(z, z') \to \beta]$\\
&& $ =\beta$;\\
$\mathbb{M}, (\beta, z)\Vdash^\alpha \phi\wedge \psi$ & iff & $\mathbb{M}, (\beta, z)\Vdash^\alpha \phi\, $ and $\, \mathbb{M}, (\beta, z)\Vdash^\alpha \psi$;\\
$\mathbb{M}, (\beta, z)\Vdash^\alpha \phi\vee \psi$ & iff & $\alpha\leq (\descr{\phi}\wedge\descr{\psi})^{[0]}(\beta, z)$\\
&& $ = \bigwedge_{z'\in Z_X}[(\descr{\phi}\wedge\descr{\psi})(z')\to (E(z, z')$\\ 
&&$\to \beta)]$;\\
$\mathbb{M}, (\beta, z)\Vdash^\alpha \lozenge \sigma$ & iff & $\alpha\leq ((R^{[0]}_\lozenge[\val{\sigma}])^{[0]})(\beta, z)$\\
&&$ = \bigwedge_{z'\in Z_X^S}[R^{[0]}_\lozenge[\val{\sigma}](z')\to (E_S(z, z') \to \beta)]$;\\

$\mathbb{M}, (\beta, z)\Vdash^\alpha \Diamond \pi$ & iff & $\alpha\leq ((R^{[0]}_\Diamond[\val{\pi}])^{[0]})(\beta, z)$\\
&&$ = \bigwedge_{z'\in Z_X^P}[R^{[0]}_\Diamond[\val{\pi}](z')\to (E_P(z, z') \to \beta)]$;\\
\end{tabular}
\end{center}
%\marginnote{please check last 2 clauses}
\begin{center}
\begin{tabular}{l c l}
$\mathbb{M}, z\succ^\alpha p$ & iff & $\alpha\leq \descr{p}( z)$;\\
$\mathbb{M}, z\succ^\alpha \bot$ & iff & $\alpha\leq 1^{\mathbf{A}^{Z_X}}( z)$ i.e.~always;\\
$\mathbb{M},  z\succ^\alpha \top$ & iff & $\alpha\leq (1^{\mathbf{A}^{Z_A}})^{[1]} ( z) $\\
&&$ = \bigwedge_{(\beta, z')\in Z_A}[1(\beta, z')\to  (E(z', z) \to \beta)]$\\
&&$ = \bigwedge_{(\beta, z')\in Z_A}[E(z', z) \to \beta]$\\
&& $ =\beta$;\\
$\mathbb{M},  z\succ^\alpha \phi\vee \psi$ & iff & $\mathbb{M},  z\succ^\alpha \phi$ and $\mathbb{M},  z\succ^\alpha \psi$;\\
$\mathbb{M}, z\succ^\alpha \phi\wedge \psi$ & iff & $\alpha\leq (\val{\phi}\wedge\val{\psi})^{[1]}(z)$\\
&&$ = \bigwedge_{(\beta, z')\in Z_A}[(\val{\phi}\wedge\val{\psi})(\beta, z')\to  (E(z', z)$\\
&&$\to \beta)]$;\\

$\mathbb{M}, z\succ^\alpha \lozenge \sigma$ & iff & $\alpha\leq (R^{[0]}_\lozenge[\val{\sigma}])(z)$\\
&& $ = \bigwedge_{ (\beta, z')\in Z_A^P}[\val{\sigma}(\beta, z')\to (R_{\lozenge}(z, z')\to \beta)]$;\\

$\mathbb{M}, z\succ^\alpha \Diamond \pi$ & iff & $\alpha\leq (R^{[0]}_\Diamond[\val{\pi}])(z)$\\
&& $ = \bigwedge_{ (\beta, z')\in Z_A^S}[\val{\pi}(\beta, z')\to (R_{\Diamond}(z, z')\to\beta)]$.\\
\end{tabular}
\end{center}
}}
\end{definition}
\begin{definition}\label{def:Sequent:True:In:Model}
	A type-uniform sequent $\phi \vdash \psi$ is \emph{true} in a model $\mathbb{M} = (\mathbb{G}, V)$ (notation:  $\mathbb{M} \models \phi \vdash \psi$) if  $\val{\phi}\subseteq \val{\psi}$, or equivalently, if $\descr{\psi}\subseteq \descr{\phi}$.
	%for all $z, z' \in Z$ of the appropriate type and all $\alpha, \beta \in \mathbf{A}$, it is the case that $\alpha \leq E(z,z') \rightarrow \beta$ whenever $\mathbb{M}, (\beta, z) \Vdash^{\alpha} \phi$ and $\mathbb{M},  z' \succ^{\alpha} \psi$.
	A type-uniform sequent $\phi \vdash \psi$ is \emph{valid on a graph-based frame} $\mathbb{G}$ (notation:  $\mathbb{G} \models \phi \vdash \psi$) if $\phi \vdash \psi$ is true in every model $\mathbb{M} = (\mathbb{G}, V)$ based on $\mathbb{G}$.
\end{definition}
\begin{remark}\label{remark:Monotone:Vals}
As remarked in \cite{eusflat}, it is not difficult to see that for all stable valuations, if $p\in \Prop$ and $\beta, \beta'\in \mathbf{A}$ such that $\beta\leq \beta'$, then $\val{p}(\beta, z) \leq \val{p}(\beta', z)$ for every $z\in Z$, and one can readily verify that this condition extends compositionally to every $\phi\in \mathcal{L}$. 
\end{remark}

\section{Case study: the socio-political arena}\label{sec:Case:Study}

Let $\mathsf{Var}$ be a nonempty set of variables  (intended to represent {\em topics} or  {\em issues}, as in e.g.~\cite{arjen}). As was done in \cite{eusflat}, for the purpose of this analysis, a socio-political theory is characterized by (and here identified with) a certain subset $X\subseteq \mathsf{Var}$ of issues which are {\em relevant} to the given theory.  The heterogeneous $\mathbf{A}$-frames considered in the present section are structures  $\mathbb{G} = (\X_S, \X_P, R_\Diamond, R_\lozenge)$ (cf.~Definition \ref{def:graph:based:frame:and:model}) such that  $\X_S: = (Z^S, E_S)$  with $Z^S: = \{z_{X_i}\mid 0\leq i\leq n\}$ where $X_i\subseteq \mathsf{Var}$ for each $1 \leq i \leq n$ and $\X_P: = (Z^P, E_P)$ with $Z^P: = \{z_{X_j}\mid 0\leq i\leq m\}$, again with $X_j\subseteq \mathsf{Var}$ for each $1 \leq j \leq m$. 
The elements of the sets $Z^S$ and $Z^P$ stand for {\em social groups} and {\em political parties}, respectively. The set of variables indexing each social group in $ Z^S$ (resp.~each political party in $Z^P$) stand for the issues considered relevant by that social group or political party. Sometimes, it can be useful to encode the positive or negative orientation of the group/party towards each relevant issue by assigning a {\em sign} ($+$ or $-$) to each element of the indexing set $X_i$ or $X_j$.\footnote{Of course,  a sign is often not enough to achieve a full disambiguation; however, for the sake of the example below, what will matter is whether a given party and social group assign the same or opposite sign to a given issue relevant to both.} In this context, $\SI$-formulas (resp.~$\PP$-formulas) can be thought of as {\em social demands} (resp.~{\em political promises}) which will be `tested' (i.e.~will be assigned truth-degrees) at  states of $Z_P$ (resp.~$Z_S$), i.e.~at political parties (resp.~social groups) in models based on these frames. Notice the inversion: $\SI$-formulas will be evaluated at $\X_P$-states, and $\PP$-formulas at $\X_S$-states. This truth value assignment of formulas at states  is then meant to act as a proxy for the support (or interest) of the given social group in the given political promise, and of the support (or interest) of the political party in the given social demand,  with higher truth values   indicating higher levels of support/interest.

The  $\mathbf{A}$-relation $E_S: Z^S\times Z^S\to \mathbf{A}$ (resp.~$E_P: Z^P\times Z^P\to \mathbf{A}$) encodes a graded notion of {\em similarity} between social groups (resp.~political parties). This  idea can be concretely implemented e.g.~by letting $E(z_{X_1}, z_{X_2})$ record the percentage of variables of $z_{X_1}$ that also occur in $z_{X_2}$, i.e by taking $E(z_{X_1}, z_{X_2}) = |X_1 \cap X_2| \div |X_1|$, possibly modulo identification of similar issues.\footnote{In this paper, we are not actually committing to a specific definition of the similarity relations, although the one above naturally arises from the present formal framework and will be employed in the case study we present.} As remarked in  \cite{eusflat}, a relation defined accordingly  will be reflexive (i.e.~$E(z, z) = 1$ for every $z\in Z$) but does not need to be symmetric or transitive; moreover, it is  not required to record the positive or negative attitudes vis-\`a-vis an issue, so as to not exclude the possibility that social groups (resp.~parties) with directly opposing views on a large percentage of issues have a high similarity  degree.%\marginnote{W: I'm a bit dubious about that.}

The $\mathbf{A}$-relation $R_\Diamond: Z^P\times Z^S\to \mathbf{A}$ (resp.~$R_\lozenge: Z^S\times Z^P\to \mathbf{A}$)  encodes the extent to which a political party (resp.~social group) has {\em affinity} with a social group (resp.~political party). One would expect that such a measure should be based on the extent to which the political party (resp.~social group) perceives its issues to be issues of the social group (resp.~political party). This idea can be concretely implemented e.g.~as follows: Let $X_{P} \subseteq \mathsf{Var}$ (resp.~$X_{S} \subseteq \mathsf{Var}$) be the set of all issues of political parties (resp.~social groups). Encode the extent to which a political party $z_{X_j}$ recognizes each of its issues in an issue of a social group, using a \emph{recognition function} $f_{z_{X_j}}: X_j \times X_S \to [0,1]$. We then set  $R_\Diamond (z_{X_j}, z_{X_i}) = \sum \{ f(x,y) \mid (x,y) \in X_j \times X_i \}  \div |\{ (x,y) \in X_j \times X_i \mid f(x,y) \neq 0 \}|$. Recognition functions for social groups and the resulting definition of $R_\lozenge$ are analogous. More nuanced realisations might, among other considerations, also include a weighting to account for the relative importance of issues to the political parties or groups.  Notice that we are {\em not} requiring, because it would be implausible, that $R_\Diamond (z_{X_j}, z_{X_i}) = R_\lozenge (z_{X_i}, z_{X_j})$ for all $z_{X_j}$ and $z_{X_i}$. Below, we give a more concrete illustration of this environment by means of an example loosely inspired by the British socio-political scene.  

Let $\mathsf{Var}: = \{st, o, lt, ap, ft, cr, it, fs, h, at, s\}$ be the set of issues, where the intended meaning of each variable is indicated below:
\begin{center}
	\begin{tabular}{c c l}
		$st$ && lower income {\bf t}ax on {\bf s}alaries\\
		$o$  && foreigners {\bf o}ut\\
		$lt$ && lower {\bf t}axes on income generated from {\bf l}and\\
		$ap$ && preservation of {\bf a}ristocratic {\bf p}rivileges\\
		$ft$ && lower {\bf f}inancial transactions {\bf t}ax\\
		$cl$ && harmonization of European {\bf c}orporation {\bf l}aw\\
		$it$ && progressive {\bf i}ncome {\bf t}ax\\
		$fs$ && higher tax on {\bf f}oreign {\bf s}tocks flotation on the London stock exchange\\
		$h$ && fox {\bf h}unting\\
		$at$ && lower {\bf t}ax on {\bf a}gricultural sector\\ 
		%$e$ && {\bf e}nvironment\\
		$s$ && national {\bf s}overeignty\\
		$ur$ &&reduced rights for {\bf u}nion {\bf r}epresentatives in factories\\
		$ds$ && tax {\bf d}eductions for {\bf s}avings of lower income workers\\
		$pd$ && return to the {\bf p}re-{\bf d}ecimal currency system\\
	\end{tabular}
\end{center} 
Let 
$Z_S: = \{z_{F}, z_{D}, z_{B}\}$ and $Z_P: = \{z_{L}, z_{C}, z_{X}\}$, where 
\begin{center}
	\begin{tabular}{c  l l}
		$z_F$ & {\bf F}actory workers in Manchester & $F: = \{+st, +o\}$\\
		$z_D$ &Extended family of {\bf D}uke of Westminster & $D: = \{+lt, +ap\}$\\
		$z_B$ & London City {\bf B}ankers  & $B: = \{+ft, -cl\}$\\
		\hline
		$z_L$ & {\bf L}abour party  & $L: = \{+it, +fs, -h\}$\\
		$z_C$ & {\bf C}onservative party  & $C: = \{+at, +h, +ur \}$\\
		$z_X$ & Bre{\bf x}it  party& $X: = \{+s, +ds, +pd \}$\\
	\end{tabular}
\end{center} 

To calculate the similarity between political parties, we need to compare their positions in terms of issues. However, since there is ostensibly little overlap in the issues \emph{as formulated}, we will impose an equivalence relation $\sim_{P}$ to abstract the broad \emph{kinds of issues} and use that for the comparison. Suppose the equivalence classes of $\sim_{P}$ are given by grouping income tax issues together and `heritage issues' together, while keeping other issues separate, as follows:
\[
\{\{it, at, ds \}, \{fs\}, \{h, pd \}, \{s\}, \{ur \} \}.
\]
In the same way, to calculate the similarity between social groups, we impose an equivalence relation $\sim_S$ on their issues, equating tax issues while distinguishing other issues:
\[
\{\{st, lt, ft \}, \{o\}, \{ap\}, \{cl\} \}.
\]
The equivalence class of an issue $i$ under $\sim_P$ (respectively, $\sim_S$) is denoted by $[i]_P$ (respectively, $[i]_S$). The similarity relations between $Z^S$ and $Z^P$ can take values in the  11-element {\L}ukasiewicz chain $\mathbf{A}$ with domain $\{0, 0.1, 0.2, \ldots, 1\}$, as indicated in the following diagram:
% $E: Z\times Z\to \mathbf{A}$ is 

%\begin{figure}
\begin{center}
	\begin{tikzpicture}[scale=1.5]
	\node (F)    at (-2,0)    {$z_F$};
	\node (D)   at (0,0)    {$z_D$};
	\node (B)    at (2,0)    {$z_B$};
	\node (L)    at (-2,2.6)    {$z_L$};
	\node (C)   at (0,2.6)    {$z_C$};
	\node (U)    at (2,2.6)    {$z_U$};
	\draw[->] (D) -- (F) node [midway,below]{0.5};
	\draw[->] (D) -- (B) node [midway,below]{0.5};
	\draw[->, bend left = 25]  (F) to node [midway,above]{$0.5$} (D);
	\draw[->, bend right = 25]  (B) to node [midway,above]{$0.5$} (D);
	\draw[->,   loop below, min distance=5mm]  (D) to node [midway,below]{$1$} (D);
	\draw[->,  loop left, min distance=5mm]  (F) to node [midway,above]{$1$} (F);
	\draw[->,  loop right, min distance=5mm]  (B) to node [midway,above]{$1$} (B);
	\draw[->,  bend left=50] (F) to node [midway,above]{$0.5$} (B);
	\draw[->,  bend left=50] (B) to node [midway,below]{$0.5$} (F);	
	
	\draw[->] (C) -- (L) node [midway,below]{1};
	\draw[->] (C) -- (U) node [midway,below]{0};
	\draw[->, bend left = 25]  (L) to node [midway,above]{$0.7$} (C);
	\draw[->, bend right = 25]  (U) to node [midway,above]{$0.7$} (C);
	\draw[->,   loop below, min distance=5mm]  (C) to node [midway,below]{$1$} (C);
	\draw[->,  loop left, min distance=5mm]  (L) to node [midway,above]{$1$} (L);
	\draw[->,  loop right, min distance=5mm]  (U) to node [midway,above]{$1$} (U);
	\draw[->,  bend left=50] (L) to node [midway,above]{$0.7$} (U);
	\draw[->,  bend left=50] (U) to node [midway,below]{$0.7$} (L);	
	
	%Labels:
	\node (labelEP)    at (-3.5,2.6)    {$E_P:$};
	\node (labelES)    at (-3.5,0)    {$E_S:$};
	
	\end{tikzpicture}	
\end{center} %\caption{}\label{fig:Chips}
%\end{figure}
The values of these relations are calculated according to the formula given above, with rounding as necessary. For example, $E_S(z_F, z_D) = |\{[st]_S, [o]_S\} \cap \{[lt]_S, [h]_S\}| \div |\{[st]_S, [o]_S\}| = |\{[st]_S\} |\div |\{[st]_S, [o]_S\}| = 0.5$.  In order to define the relation $R_\lozenge: Z^S\times Z^P\to \mathbf{A}$, we will use the following recognition functions:

\begin{align*}
&\begin{array}{l|llllllll}
f_F &fs &it &h &at &s &ur &ds &pd\\
\hline
st &0 &0.9 &0 &0.2 &0 &0 &0.2 &0\\
o  &0.5 &0 &0 &0 &0.4 &0 &0 &0\\
\end{array}
\quad \quad 
\begin{array}{l|llllllll}
f_D &fs &it &h &at &s &ur &ds &pd\\
\hline
lt &0 &0.2 &0 &0.6 &0 &0 &0 &0\\
ap &0 &0 &0.8 &0 &0.2 &0 &0 &0 \\
\end{array}
\\
&\begin{array}{l|llllllll}
f_B &fs &it &h &at &s &ur &ds &pd\\
\hline
ft &0 &0 &0 &0.4 &0 &0 &0.3 &0\\
cl &0.3 &0 &0 &0 &0.5 &0 &0 &0\\
\end{array}
\end{align*} 
This enables us to calculate, e.g.\ $R_{\lozenge}(z_F, z_L) = [(f_{F}(st,it) + f_{F}(o,fs)) \div |\{f_{F}(st,it),  f_{F}(o,st) \}|] = (0.9 + 0.5) \div 2 = 0.7$. The complete relation $R_{\lozenge}(z_F, z_L)$ is depicted on the following figure:

\begin{center}
	\begin{tikzpicture}[scale=1.5]
	\node (F)    at (-2,0)    {$z_F$};
	\node (D)   at (0,0)    {$z_D$};
	\node (B)    at (2,0)    {$z_B$};
	
	\node (L)    at (-2,2.6)    {$z_L$};
	\node (C)   at (0,2.6)    {$z_C$};
	\node (U)    at (2,2.6)    {$z_X$};

	\draw[->, bend left = 0, dashed]  (F) to node [midway,left]{$0.7$} (L);	
	%\draw[->, bend left = 15, dashed]  (L) to node [midway,above]{$0.2$} (F);	
	\draw[->, bend left = 15, dashed]  (F) to node [near start,above]{$0.2$} (C);	
	%\draw[->, bend left = 15, dashed]  (C) to node [midway,above]{$0.2$} (F);	
	\draw[->, bend left = 15, dashed]  (F) to node [pos=0.1,right]{$0.3$} (U);	
	%\draw[->, bend left = 15, dashed]  (U) to node [midway,below]{$1$} (F);
	
	\draw[->, bend left = 15, dashed]  (D) to node [pos=0.2,above]{$0.2$} (L);	
	%\draw[->, bend left = 15, dashed]  (L) to node [midway,above]{$0.2$} (D);	
	\draw[->, bend left = 0, dashed]  (D) to node [pos=0.1,left]{$0.7$} (C);	
	%\draw[->, bend left = 15, dashed]  (C) to node [midway,above]{$0.2$} (D);	
	\draw[->, bend left = 15, dashed]  (D) to node [pos=0.05,right]{$0.2$} (U);	
	%\draw[->, bend left = 15, dashed]  (U) to node [midway,below]{$1$} (D);
	
	\draw[->, bend left = 15, dashed]  (B) to node [pos=0.1,left]{$0.3$} (L);	
	%\draw[->, bend left = 15, dashed]  (L) to node [midway,above]{$0.2$} (B);	
	\draw[->, bend left = 15, dashed]  (B) to node [pos=0.1,above]{$0.4$} (C);	
	%\draw[->, bend left = 15, dashed]  (C) to node [midway,above]{$0.2$} (B);	
	\draw[->, bend left = 0, dashed]  (B) to node [midway,right]{$0.4$} (U);	
	%\draw[->, bend left = 15, dashed]  (U) to node [midway,below]{$1$} (B);
	
	%Label:
	\node (F)    at (-3,1.3)    {$R_{\lozenge}$:};
	
	\end{tikzpicture}	
\end{center} %\caption{}\label{fig:Chips}
%\end{figure}
%However, the assignment of $0.5$ as the degree to which $z_F$ has affinity to the Ukip party  $z_U$ has twist, based on an interesting analogy to a phenomenon  discussed in \cite{eusflat} in the context of the formal setting for modelling the competition of scientific theories. In \cite{eusflat}, it is observed that  a very common situation in the practice of  empirical research is that databases do not perfectly match the hypotheses that scientists wish to test on them, and   to be able to test them anyway, one often needs to  use proxies that represent to some extent the missing variable (or recover the values of the missing variables by some motivated guesswork). Likewise, we wish to argue that, if  the issue of national sovereignty ($s$), which is relevant to $z_U$, is taken as a {\em proxy} for the issue ``foreigners out'' ($o$), which is relevant to $z_F$ but is not explicitly espoused by any political party, then the affinity degree of $z_F$ to $z_U$ can be calculated as $R_\lozenge (z_{F}, z_{U}) = (|\{ + o \}| \div |\{ o \}|) \times (|\{ + o \}| \div |\{ t, o \}|) = 0.5$
%

The recognition function of the political parties are given by:
\begin{align*}
&\begin{array}{l|llllll}
f_L &st &o &lt &ap &ft &cl\\
\hline
fs &0 &0.2 &0 &0 &0 &0.3\\
it &0.8 &0 &0.2 &0 &0 &0\\
h  &0 &0 &0 &0 &0 &0
\end{array}
\quad \quad
\begin{array}{l|llllll}
f_C &st &o &lt &ap &ft &cl\\
\hline
at &0.3 &0 &0.4 &0 &0 &0\\
h  &0 &0 &0 &0.6 &0 &0\\
ur &0 &0 &0 &0 &0 &0.4\\
\end{array}
\\
&\begin{array}{l|llllll}
f_X &st &o &lt &ap &ft &cl\\
\hline
s  &0 &0.8 &0 &0.2 &0 &0.5\\
ds &0.4 &0 &0 &0 &0.3 &0\\
pd &0 &0 &0 &0.2 &0 &0\\
\end{array}
\end{align*}
Based on these recognition functions, the relation  $R_\Diamond: Z^P\times Z^S\to \mathbf{A}$ is calculated, and given the figure below:

\begin{center}
	\begin{tikzpicture}[scale=1.5]
	\node (F)    at (-2,0)    {$z_F$};
	\node (D)   at (0,0)    {$z_D$};
	\node (B)    at (2,0)    {$z_B$};
	
	\node (L)    at (-2,2.6)    {$z_L$};
	\node (C)   at (0,2.6)    {$z_C$};
	\node (U)    at (2,2.6)    {$z_X$};

	%\draw[->, bend left = 15, dashed]  (F) to node [midway,above]{$0.4$} (L);	
	\draw[->, bend left = 0, dashed]  (L) to node [pos=0.6,above]{$0.5$} (F);	
	%\draw[->, bend left = 15, dashed]  (F) to node [midway,above]{$0.2$} (C);	
	\draw[->, bend left = 15, dashed]  (C) to node [pos=0.35,above]{$0.3$} (F);	
	%\draw[->, bend left = 15, dashed]  (F) to node [midway,above]{$0.4$} (U);	
	\draw[->, bend left = 15, dashed]  (U) to node [pos=0.1,above]{$0.6$} (F);
	
	%\draw[->, bend left = 15, dashed]  (D) to node [midway,above]{$0.4$} (L);	
	\draw[->, bend left = 15, dashed]  (L) to node [pos=0.2,above]{$0.2$} (D);	
	%\draw[->, bend left = 15, dashed]  (D) to node [midway,above]{$0.2$} (C);	
	\draw[->, bend left = 0, dashed]  (C) to node [pos=0.1,right]{$0.5$} (D);	
	%\draw[->, bend left = 15, dashed]  (D) to node [midway,above]{$0.4$} (U);	
	\draw[->, bend left = 15, dashed]  (U) to node [pos=0.2,below]{$0.2$} (D);
	
	%\draw[->, bend left = 15, dashed]  (B) to node [midway,above]{$0.4$} (L);	
	\draw[->, bend left = 15, dashed]  (L) to node [pos=0.2,above]{$0.3$} (B);	
	%\draw[->, bend left = 15, dashed]  (B) to node [midway,above]{$0.2$} (C);	
	\draw[->, bend left = 15, dashed]  (C) to node [pos=0.2,above]{$0.4$} (B);	
	%\draw[->, bend left = 15, dashed]  (B) to node [midway,above]{$0.4$} (U);	
	\draw[->, bend left = 0, dashed]  (U) to node [pos=0.2,right]{$0.4$} (B);
	
	%Label:
	\node (F)    at (-3,1.3)    {$R_{\Diamond}$:};
	
	\end{tikzpicture}	
\end{center} %\caption{}\label{fig:Chips}
%\end{figure}
Notice that e.g.~$z_L$ has a {\em lower} degree of affinity to $ z_F$ than $z_F$ has to $z_L$; this difference is due to the asymmetry between the way in which the Labour party recognises its issues in the issues of the factory workers and the way in which factory workers recognise theirs in those of the Labour party.  

Let $\sigma_F,\sigma_D, \sigma_B\in \SI$ and $\pi_L,\pi_C, \pi_U\in \PP$ respectively represent the following social demands and political promises:

%\begin{center}
%\begin{tabular}{l c l}
%$\sigma_F(t, o)$ && tax-money used to enforce immigration laws\\
%$\sigma_D(t, h)$ && favourable tax policy for English aristocrats\\
%&&  running family-owned fox hunting estates\\
%$\sigma_B(t, s)$ && decreasing national sovereignty to facilitate\\
%&& international harmonization of tax-policies\\
%$\pi_L(e, t, h)$ && tax-money used for maintaining environmentally\\
%&& protected areas and enforcing  fox hunting ban \\
%$\pi_C(t, h)$ && reducing the use of tax-money for  enforcing  \\
%&&  fox hunting ban \\
%$\pi_U(s)$ && increasing national sovereignty\\
%\end{tabular}
%\end{center}  
%\texttt{NEW OPTIONS}

\begin{center}
	\begin{tabular}{l c l}
		$\sigma_F(st, o)$ && lower taxes on salaries by cutting social benefits for foreigners\\
		$\sigma_D(lt, ap)$ && right for the house of lords to veto laws on grounds of national interest\\
		$\sigma_B(ft, cl)$ && UK companies are allowed to pay the salaries of their employees only through UK banks\\ 
		$\pi_L(it, fs, h)$ && tax-money used to enforce fox hunting ban \\
		$\pi_C(at, h)$ && reducing the use of tax-money for  enforcing fox hunting ban\\
		$\pi_X(s)$ && increasing national sovereignty\\
	\end{tabular}
\end{center}

Each demand (resp.~promise) is phrased in terms of some of the issues relevant to a social group (resp.~political party). 
Each $\PP$-formula is `tested' on {\em social situations} $(\beta, z) \in \mathbf{A}\times Z^S$ and each  $\SI$-formula on on {\em political situations} $(\beta, z) \in \mathbf{A}\times Z^P$, and the outcome of these `tests' is encoded into interpretation maps for each $\sigma\in \{\sigma_F,\sigma_D, \sigma_B\}$ and each $\pi\in \{\pi_L,\pi_C, \pi_U\}$ of the following types:
\begin{center}
	\begin{tabular}{ll}
		$\val{\pi}: \mathbf{A}\times Z^S\to \mathbf{A}$ & $\descr{\pi}: Z^S\to \mathbf{A}$\\
		$\val{\sigma}: \mathbf{A}\times Z^P\to \mathbf{A}$ & $\descr{\sigma}: Z^P\to \mathbf{A}$\\
	\end{tabular}
\end{center}
where as usual, for each formula $\phi$, the $\mathbf{A}$-set $\val{\phi}: \mathbf{A}\times Z\to \mathbf{A}$ indicates the extent to which  $\phi$ is supported on each situation of the appropriate type, and $\descr{\phi}: Z\to \mathbf{A}$ the extent to which it is rejected at each state of the appropriate type. In the setting of \cite{eusflat}, the intended interpretation of $\beta$ is the flexibility in translating/operationalizing data to variables. In analogy with this interpretation, we propose that when political promises are evaluated in situations $(\beta, z)$ where $z$ is a social groups, $\beta$ captures the maximum degree of flexibility in how voting (polling) is translated into the expression of the will of the group $z$. This degree might  include or take into account e.g.~the representativity of the sample, but also how rigorously the rules governing the test (e.g.\ eligibility criteria) are enforced, voter turnout, features of the electoral system like proportional representation vs first-past-the-post, etc. 
%Hence, in our proposed interpretation, the value $\beta$ represents the minimum norm we have to call any voting process "democratic" in the given situation, while still the actual percentages of voters that are allowed voting, the differences between the outcomes in different direction etc give the significance-equivalent value $\alpha$.}
When social demands are evaluated in situation $(\beta, z)$ where $z$ is a political party, $\beta$ captures the maximum degree of flexibility in the outcome of the ``test'' is interpreted as the reflecting the position of the party  on that demand. Under a high $\beta$ value, one would be allowed to assign high significance to e.g.\ informal consultations among member of the party, while lower $\beta$ values would require higher standards of evidence, e.g.\ official policy documents of formal declarations following a party congress.

Let us represent $\mathbf{A}$-sets $\val{\phi}: \mathbf{A}\times Z\to \mathbf{A}$, for $Z \in \{Z^S, Z^P\}$, in tables with rows labelled by $\mathbf{A}$-elements and columns by $Z$-elements. Moreover, we represent $\descr{\phi}: Z \to \mathbf{A}$ as a triple $(\alpha, \beta, \gamma)$ where  $(\alpha, \beta, \gamma) = (\descr{\phi}(z_F), \descr{\phi}(z_D), \descr{\phi}(z_B))$ if $Z = Z^S$, and $(\alpha, \beta, \gamma) = (\descr{\phi}(z_L), \descr{\phi}(z_C), \descr{\phi}(z_X))$ if $Z = Z^P$. Then, using this notation, suppose that the interpretation of the political promises $\pi_L$ results in the following outcome: 

\begin{center}
	\begin{tabular}{l|lll}
		$\val{\pi_L}$ &$z_F$ &$z_D$ &$z_B$\\
		\hline
		$0.0$ &$0.6$ &$0.1$ &$0.2$\\
		$0.1$ &$0.7$ &$0.2$ &$0.3$\\
		$0.2$ &$0.8$ &$0.3$ &$0.4$\\
		$0.3$ &$0.9$ &$0.4$ &$0.5$\\
		$0.4$ &$1.0$ &$0.5$ &$0.6$\\
		$0.5$ &$1.0$ &$0.6$ &$0.7$\\
		$0.6$ &$1.0$ &$0.7$ &$0.8$\\
		$0.7$ &$1.0$ &$0.8$ &$0.9$\\
		$0.8$ &$1.0$ &$0.9$ &$1.0$\\
		$0.9$ &$1.0$ &$1.0$ &$1.0$\\
		$1.0$ &$1.0$ &$1.0$ &$1.0$\\
	\end{tabular}
	$\quad \quad$
	$\descr{\pi_L} = (0.4, 0.9, 0.8)$.
\end{center}
A noticeable feature of the table for $\val{\pi_L}$ is that, from the second row onwards, the value of any entry is always exactly $0.1$ greater than the entry one row above in the same column. This is no coincidence, and will be the case if the truth value algebra $\mathbf{A}$ is any finite subalgebra of the standard \L ukasiewics algebra. One can verify this by noting that, for any $\phi\in \mathcal{L}_{\mathsc{MT}}$, since $\val{\phi}$ is Galois-closed, it can be recovered from $\descr{\phi}$ as follows:
\begin{align*}
\val{\phi}(\beta, z) &= \bigwedge_{z'\in Z_X}[ \descr{\phi}(z')\to (E(z, z')\to\beta)]\\
&= \bigwedge_{z'\in Z_X}[ (\descr{\phi}(z') \otimes (E(z, z')) \to\beta)]\\
&= \bigvee_{z'\in Z_X} [\descr{\phi}(z') \otimes (E(z, z'))] \to\beta)\\
&= \min \left\{ 1, 1 - \left( \bigvee_{z'\in Z_X} [\descr{\phi}(z') \otimes (E(z, z'))]\right)  +  \beta \right\}\\
\end{align*}
As a result, we will represent the values of the other political promises and social demands more compactly, by giving only the first row of the table in each case. So, suppose that the interpretation of the political promises $\pi_C$ and $\pi_X$ results in the following outcomes: 

\begin{center}
	\begin{tabular}{l|lll}
		$\val{\pi_C}$ &$z_F$ &$z_D$ &$z_B$\\
		\hline
		$0.0$ &$0.2$ &$0.7$ &$0.7$\\
		$\vdots$ &$\vdots$ &$\vdots$ &$\vdots$
	\end{tabular}
	$\quad \quad$
	\begin{tabular}{l|lll}
		$\val{\pi_X}$ &$z_F$ &$z_D$ &$z_B$\\
		\hline
		$0.0$ &$0.6$ &$0.2$ &$0.4$\\
		$\vdots$ &$\vdots$ &$\vdots$ &$\vdots$
	\end{tabular}
\end{center}
with $\descr{\pi_C} = (0.8, 0.3, 0.3)$  and  $\descr{\pi_X} = (0.4, 0.8, 0.6)$. Suppose further that the interpretation of the social demand above results in the following outcomes: 

\begin{center}
	\begin{tabular}{l|lll}
		$\val{\sigma_F}$ &$z_L$ &$z_C$ &$z_X$\\
		\hline
		$0.0$ &$0.6$ &$0.3$ &$0.6$\\
		$\vdots$ &$\vdots$ &$\vdots$ &$\vdots$
	\end{tabular}
	$\quad \quad$
	\begin{tabular}{l|lll}
		$\val{\sigma_D}$ &$z_L$ &$z_C$ &$z_X$\\
		\hline
		$0.0$ &$0.3$ &$0.6$ &$0.3$\\
		$\vdots$ &$\vdots$ &$\vdots$ &$\vdots$
	\end{tabular}
	$\quad \quad$
	\begin{tabular}{l|lll}
		$\val{\sigma_B}$ &$z_L$ &$z_C$ &$z_X$\\
		\hline
		$0.0$ &$0.3$ &$0.6$ &$0.6$\\
		$\vdots$ &$\vdots$ &$\vdots$ &$\vdots$
	\end{tabular}
\end{center}
with  $\descr{\sigma_F} = (0.4, 0.7, 0.4)$, $\descr{\sigma_D} = (0.7, 0.4, 0.7)$  and  $\descr{\sigma_B} = (0.7, 0.4, 0.4)$. 

We are now in a position to compute the extensions of the $\SI$-formulas $\Diamond \pi_L$, $\Diamond \pi_C$ $\Diamond \pi_U$, and of the $\PP$-formulas $\lozenge \sigma_F$, $\lozenge \sigma_D$, $\lozenge \sigma_B$. We will only consider and interpret two examples. It can be verified that: 

\begin{center}
	\begin{tabular}{l|lll}
		$\val{\lozenge \sigma_D}$ &$z_F$ &$z_D$ &$z_B$\\
		\hline
		$0.0$ &$0.0$ &$0.3$ &$0.0$\\
		$\vdots$ &$\vdots$ &$\vdots$ &$\vdots$
	\end{tabular}
	$\quad \quad$
	\begin{tabular}{l|lll}
		$\val{\Diamond \pi_C}$ &$z_L$ &$z_C$ &$z_X$\\
		\hline
		$0.0$ &$0.0$ &$0.2$ &$0.1$\\
		$\vdots$ &$\vdots$ &$\vdots$ &$\vdots$
	\end{tabular}
\end{center}

Recall that $\val{\pi_C}$ measured the reaction from the three social groups to the promise by the Conservative Party to reduce the use of tax money to enforce the fox hunting ban. This was supported by both the Duke's social group and the bankers to the extent $0.7$ (with $\beta = 0$), presumably since both groups seek lower taxes and, in the first case, also presumably since fox hunting is part of British aristocratic culture. The factory workers' support was low, at $0.2$. Now $\val{\Diamond \pi_C}$ represents the response of the three political parties to this information:
notice that it is proportional to the extent to which the target demographics respond to the promise and the extent to which they are targeted by the parties. For example, given their main target demographics, namely the Duke's social group and the bankers, the Tories are more favourably inclined towards their own promise than is Labour, whose main target demographic, the factory workers, don't respond very well to this promise. The Brexit party lies in between, since there is a mixed response from their main target demographics, namely the factory workers and bankers.   

Turning to 	$\val{\lozenge \sigma_D}$, recall that $\val{\sigma_D}$ represents the degree of support the three political parties give (e.g.~as measured by statements of members of the party, policy documents etc.) to the social group of the Duke's demand for veto powers for the house of lords. The value of $\val{\lozenge \sigma_D}$ in turn represents the response of the three social groups to this support expressed by the political parties. Neither the factory workers nor the bankers show any enthusiasm, while the Duke's social group shows a more positive response, since their party of choice gives a fair measure of support to their demand.

\section{Epilogue}
We suggest that several interesting analogies can be drawn between competition of theories in the empirical sciences (cf.~\cite{eusflat}) and competition of social groups and political parties embodying socio-political theories;  these analogies can be drawn thanks to the general formal framework  adopted both in \cite{eusflat} and in the present paper, which we have illustrated with the case study discussed in the previous section.

In \cite{eusflat}, the competition of scientific theories (identified with sets of relevant variables) plays out in the arena of (a given graph of) databases, each of which is built according to a different theory, and therefore has different degrees of similarity to other databases in the graph.   Theories $X$ and $Y$ compete by having   their respective (key) hypotheses $\phi(X)$ and $\psi(Y)$ tested on all the databases of the given graph; each of these databases will be more or less suitable to test a given hypothesis. Hence, a clear-cut case in which  $X$ outcompetes  $Y$ is if, while each hypothesis is expected to score well on its `home-ground' (i.e.~on the databases built in accordance with the theory in the variables of which the given hypothesis is formulated, or maximally similar to those),  the performances of $\phi(X)$ on the databases that are not its own `home-ground' are better than the performances of  $\psi(Y)$ on the databases that are not its own `home-ground'.

Likewise, political competition between parties plays out in the arena of  (a given graph of) social groups, each of which has its own `social theory' (represented as the set of issues relevant to that social group), and therefore has different degrees of similarity to other social groups in the graph.   Parties $X$ and $Y$ compete by having   their respective (key) promises $\pi_1(X)$ and $\pi_2(Y)$ tested on all the social groups of the given graph; each of these social groups will be more or less receptive or supportive of a given promise. Hence, a clear-cut case in which  $X$ outcompetes  $Y$ is if, while each promise is expected to score well on its `home-ground'  (i.e.~on the social groups with strong affinity to the party),  the performances of $\pi_1(X)$ on the social groups that are not its own `home-ground' are better than the performances of  $\pi_2(Y)$ on the social groups that are not its own `home-ground', and this is decided by their respective performances on the social groups that are away-ground for both parties.

The socio-political competition has a further interesting twist, given by the possibility of simultaneously representing the competition between social groups  playing out in the arena of political parties. Again, a winner of this competition is a social group the demands of which are `listened to'  by a wider audience of political parties than their `home-ground'.  

Key to the possibility of winning on `away-ground' is a mechanism that is well known in the practice of science, and consists in the possibility of retrieving the values of variables that are not as such represented in the database by using ``translations'' of the values of other variables as {\em proxies}. We have proposed that certain unexpected socio-political alignments can be better understood in terms of an analogous mechanism in which issues that figure in the program of a political party can be translated into issues that figure on the agenda of social group, and vice versa.

Finally, although stylised and simplified, this framework  offers the possibility to analyse two competitive processes playing out at the same time, thereby paving the way to the possibility of formulating and answering a whole different range of formal questions about socio-political dynamics.

\section{Conclusions}

In this paper, we have introduced a many-valued  semantic environment for  a {\em multi-type} modal language based on the logic of general  (i.e.~not necessarily distributive) lattices. We have proved soundness and completeness for the basic logic, and, by means of a case study, we have illustrated the potential of this  framework as a tool for the formal analysis of socio-political competition. Below, we list some remarks about the present framework, and some further questions arising from this preliminary exploration.  %This  preliminary exploration suggests many questions, of which here we list a few.

%\paragraph{Similarity vs affinity.} \marginnote{edit this} The multi-type framework introduced in the present paper is the first in which the graph-based and the polarity-based semantics for lattice-based modal logic {\em coexist} in the same  structures, although with different roles. Specifically, in each heterogeneous $\mathbf{A}$-frame $\mathbb{G}$, the {\em similarity} relations $E_S$ and $E_P$ generate , graph-based style, the two concrete lattices into which the two types of  formulas are interpreted, while the {\em affinity} relations $R_\Diamond$ and $R_\lozenge$ generate, polarity-based style, the modal operations toggling  between these two concrete lattices. The insights from previous papers in this research line suggest that, while  relations in the graph-based framework can be naturally understood in terms of {\em indiscernibility, uncertainty, ignorance}, the more natural interpretation of relations in the polarity-based framework is the reversed one, given in  terms of {\em positive (partial) knowledge}. This  is useful to highlight a key difference between similarity and affinity which is captured by the present framework (cf.~Section \ref{sec:interpretation}): while similarity identifies entities of the same type in spite of their differences, affinity between entities of different types is established by the positive acknowledgement of their commonalities.

\paragraph{Expanding the language with fixed points.} Building on \cite{CCPZ}, `dual common knowledge' formulas such as $\mu X. \lozenge\Diamond (X\wedge \pi)$ and $\mu X. \Diamond\lozenge (X\wedge \sigma)$ can be understood as describing the convergence of ongoing processes of interaction between social groups and political parties. It would be interesting to  use the expressive power  of (multi-type) lattice-based fixed-point logic to describe and reason about these phenomena.

\paragraph{Towards an analysis of the dynamics of socio-political competition.}  Related to the previous point, the framework introduced in this paper lends itself to the formal analysis of the dynamics triggered by the interplay of social groups and political parties, a  theme on which recent research in political science has focused (cf.~e.g.~\cite{lowery2013policy,adapt-or-perish}). This direction would address questions relative e.g.~to the emergence of political parties in response to issues which are relevant to certain social groups, or to the emergence of novel social group identities by effect of certain political alignments. The framework also offers new connections to the dynamics of market categories \cite{wijnberg2011classification} and the appreciation of new members of old and new categories \cite{kuijken2017categorization} by different audiences.

\bibliography{BIBeusflat2019-2}
\bibliographystyle{plain}

\appendix

\section{Completeness}
\label{sec:completeness}

For the sake of uniformity with previous settings (cf.~e.g.~\cite[Section 7.2]{roughconcepts}) in this section, we work with graph-based frames $\mathbb{G} = (\mathbb{X}_S, \mathbb{X}_P, R_{\lozenge}, R_{\Diamond})$ the associated complex algebras of which are different from those of Definition \ref{def:graph:based:frame:and:model}. That is, for the sake of this section, for every graph $\mathbb{X} = (E, Z)$, we define its associated formal context $\mathbb{P}_{\mathbb{X}}: = (Z_A, Z_X, I_{E})$ by setting $Z_A: = Z$, $Z_X: = \mathbf{A}\times Z$ and $I_{E}: Z_A\times Z_X\to \mathbf{A}$  be defined by the assignment $(z, (\alpha, z'))\mapsto E(z, z')\to \alpha$.

For any lattice $\mathbb{L}$, an $\mathbf{A}$-{\em filter} is an $\mathbf{A}$-subset of $\mathbb{L}$, i.e.~a map $f:\mathbb{L}\to \mathbf{A}$, which is both $\wedge$- and $\top$-preserving, i.e.~$f(\top) = 1$ and $f(a\wedge b) = f(a)\wedge f(b)$ for any $a, b\in\mathbb{L}$. Intuitively, the $\wedge$-preservation encodes a many-valued version of closure under $\wedge$ of filters. An $\mathbf{A}$-filter is {\em proper} if it is also $\bot$-preserving, i.e.~$f(\bot)= 0$.   Dually, an $\mathbf{A}$-{\em ideal} is a map $i:\mathbb{L}\to \mathbf{A}$ which is both $\vee$- and $\bot$-reversing, i.e.~$i(\bot) = \top$ and $i(a\vee b) = i(a)\wedge i(b)$ for any $a, b\in\mathbb{L}$, and is {\em proper} if in addition $i(\top) = 0$. The {\em complement} of a (proper) $\mathbf{A}$-ideal is a map $u: \mathbb{L}\to \mathbf{A}$  which is both $\vee$- and $\bot$-preserving, i.e.~$u(\bot) = 0$ and $u(a\vee b) = u(a)\vee u(b)$ for any $a, b\in\mathbb{L}$ (and in addition $u(\top) = 1$). Intuitively, $u(a)$ encodes the extent to which $a$ does not belong to the ideal of which $u$ is the many-valued complement.  We let $\mathsf{F}_{\mathbf{A}}(\mathbb{L})$, $\mathsf{I}_{\mathbf{A}}(\mathbb{L})$ and $\mathsf{C}_{\mathbf{A}}(\mathbb{L})$ respectively denote the set of  proper $\mathbf{A}$-filters,   proper $\mathbf{A}$-ideals, and  the complements of proper $\mathbf{A}$-ideals of $\mathbb{L}$. 
For any heterogeneous  $\mathcal{L}_{\mathrm{MT}}$-algebra $(\mathbb{L}_S, \mathbb{L}_P, \lozenge, \Diamond)$ (cf.~Definition \ref{def:hetalgebra}) and all 
$\mathbf{A}$-subsets $k:\mathbb{L}_P\to \mathbf{A}$ and $h:\mathbb{L}_S\to \mathbf{A}$, let $k^{-\Diamond}: \mathbb{L}_S\to \mathbf{A}$ and $h^{-\lozenge}: \mathbb{L}_P\to \mathbf{A}$ be defined as $k^{-\Diamond}(s)=\bigvee\{k(p)\mid \Diamond p\leq s\}$ and $h^{-\lozenge}(p)=\bigvee\{h(s)\mid \lozenge s\leq p\}$, respectively. Then,
by definition, $k(p)\leq k^{-\Diamond}(\Diamond p)$ and 
$h(s)\leq h^{-\lozenge}(\lozenge s)$ for every $p\in \mathbb{L}_P$ and $s\in \mathbb{L}_S$.
Let $(\mathbf{SD}, \mathbf{PP}, \lozenge, \Diamond)$ be the Lindenbaum--Tarski heterogeneous algebra associated with $\mathbf{L}$.
\begin{lemma}
\label{lemma:f minus diam is filter}
\begin{enumerate}
\item If $f:\mathbb{L}_P\to \mathbf{A}$ is  an $\mathbf{A}$-filter, then so is $f^{-\Diamond}$. 
\item If $g:\mathbb{L}_S\to \mathbf{A}$ is  an $\mathbf{A}$-filter, then so is $g^{-\lozenge}$.
\item If $f:\mathbf{PP}\to \mathbf{A}$ is  a proper $\mathbf{A}$-filter, then so is $f^{-\Diamond}$.
\item If $g:\mathbf{SD}\to \mathbf{A}$ is  a proper $\mathbf{A}$-filter, then so is $g^{-\lozenge}$.
\item If $\pi_1, \pi_2\in \mathbf{PP}$, then  $\pi_1\vee \pi_2 = \top$ implies that  $\pi_1 = \top$ or $\pi_2 = \top$.
\item If $\sigma_1, \sigma_2\in \mathbf{SD}$, then  $\sigma_1\vee \sigma_2 = \top$ implies that  $\sigma_1 = \top$ or $\sigma_2 = \top$.
\end{enumerate}
\end{lemma}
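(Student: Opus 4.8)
The plan is to prove items (1)--(6) in order, since the later items build on the earlier ones. Items (1) and (2) are the technical core; items (3)--(6) are then comparatively quick. Throughout I will use the defining inequalities $k(p)\leq k^{-\Diamond}(\Diamond p)$ and $h(s)\leq h^{-\lozenge}(\lozenge s)$, together with the fact that $\Diamond$ and $\lozenge$ are normal (i.e.\ $\bot$- and $\vee$-preserving) modal operators, and monotone as a consequence.

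For item (1), I would verify that $f^{-\Diamond}$ is $\top$- and $\wedge$-preserving. For $\top$-preservation: since $\Diamond\top\leq\top$ trivially (in fact always), $f^{-\Diamond}(\top)=\bigvee\{f(p)\mid \Diamond p\leq\top\}\geq f(\top)=1$, hence equals $1$. The substantive point is $\wedge$-preservation, i.e.\ $f^{-\Diamond}(s_1\wedge s_2)=f^{-\Diamond}(s_1)\wedge f^{-\Diamond}(s_2)$. The direction $\leq$ is just monotonicity of $f^{-\Diamond}$. For $\geq$: using frame-distributivity of $\mathbf{A}$, $f^{-\Diamond}(s_1)\wedge f^{-\Diamond}(s_2)=\bigvee\{f(p_1)\wedge f(p_2)\mid \Diamond p_1\leq s_1,\ \Diamond p_2\leq s_2\}$; for any such $p_1,p_2$ we have $f(p_1)\wedge f(p_2)=f(p_1\wedge p_2)$ since $f$ is an $\mathbf{A}$-filter, and $\Diamond(p_1\wedge p_2)\leq \Diamond p_1\wedge\Diamond p_2\leq s_1\wedge s_2$ by monotonicity of $\Diamond$, so $f(p_1\wedge p_2)$ is one of the joinands defining $f^{-\Diamond}(s_1\wedge s_2)$. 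This gives the required inequality. The main obstacle is precisely this distribution of $\bigvee$ over $\wedge$, which is exactly where the hypothesis that $\mathbf{A}$ is frame-distributive is used; I should be careful that the join ranges over a set (not a proper class) and that the complete frame-distributivity law of $\mathbf{A}$ is being invoked in the correct (infinitary) form. Item (2) is entirely symmetric, with $\lozenge$ in place of $\Diamond$.

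For item (3), given that $f:\mathbf{PP}\to\mathbf{A}$ is a \emph{proper} $\mathbf{A}$-filter, by item (1) $f^{-\Diamond}$ is an $\mathbf{A}$-filter, so it remains to check properness, i.e.\ $f^{-\Diamond}(\bot_{\mathbf{SD}})=0$. We have $f^{-\Diamond}(\bot)=\bigvee\{f(p)\mid \Diamond p\leq\bot\}$. Since $\Diamond\bot\vdash\bot$ is an axiom, $\Diamond p\leq\bot$ forces (using monotonicity, or directly) that the only contribution could come from $p$ with $\Diamond p\leq\bot$; I need to argue that for every such $p$, $f(p)=0$. This uses that $f$ is proper together with the structure of the Lindenbaum--Tarski algebra --- concretely, one shows $\Diamond p\leq\bot$ implies $p\leq\bot$ in $\mathbf{PP}$ (this is a property of the basic logic $\mathbf{L}$: there is no axiom forcing $\Diamond$ to be injective on $\bot$ in general, so this step genuinely needs the syntactic shape of $\mathbf{L}$ and should be isolated as a small sublemma, perhaps "$\Diamond\pi=\bot$ iff $\pi=\bot$ in $\mathbf{PP}$"), whence $f(p)\leq f(\bot)=0$. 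Item (4) is symmetric.

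Items (5) and (6) are the "primeness of $\top$" statements in the Lindenbaum--Tarski algebra. For (5), suppose $\pi_1\vee\pi_2=\top$ in $\mathbf{PP}$, i.e.\ $\top\vdash\pi_1\vee\pi_2$ is derivable in $\mathbf{L}$. I would argue syntactically: since the basic logic $\mathbf{L}$ has no axioms that mix $\vee$ on the right of the turnstile with $\top$ in a way that could be derived without one of the disjuncts already being a theorem --- more precisely, by a disjunction-property-style argument for $\mathbf{L}$ (which can be obtained from the relational/algebraic completeness of $\mathbf{L}$ or by a proof-theoretic cut-elimination argument in the sequent presentation) --- one concludes $\top\vdash\pi_1$ or $\top\vdash\pi_2$, i.e.\ $\pi_1=\top$ or $\pi_2=\top$. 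If a purely syntactic route is awkward, the alternative is semantic: embed $\mathbf{PP}$ into the complex algebra of a graph-based $\mathbf{A}$-frame via the completeness theorem and use that in such complex algebras $\top$ is join-prime (because $\top$ corresponds to the whole space and the concept lattice of a graph-based context has this property at the top), then pull back. I expect the disjunction property for $\mathbf{L}$ to be the one genuinely non-routine ingredient here and would flag it as the main obstacle for (5)--(6); (6) is again symmetric.
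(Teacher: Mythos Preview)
Your treatment of items (1)--(2) is essentially identical to the paper's, including the use of frame-distributivity and the step $\Diamond(p_1\wedge p_2)\leq\Diamond p_1\wedge\Diamond p_2$.

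For items (3)--(6) you correctly isolate the two non-trivial sublemmata (``$\Diamond\pi=\bot$ iff $\pi=\bot$'' and ``$\top$ is join-prime in $\mathbf{PP}$''), but your proposed routes to them diverge from the paper's and one of them has a genuine problem. Your semantic option for (5)--(6) --- embed $\mathbf{PP}$ into the complex algebra of a graph-based $\mathbf{A}$-frame via the completeness theorem and use that $\top$ is join-prime there --- is circular: this very lemma is invoked in the construction of the canonical graph-based model (it is used to verify that certain auxiliary $\mathbf{A}$-subsets are proper filters/complements of ideals), so you cannot appeal to Theorem~\ref{thm:completeness} here. Moreover, the claim that $\top$ is join-prime in arbitrary graph-based complex algebras is not obvious and would itself need proof.

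What the paper actually does for (3) and (5) is to use the \emph{algebraic} completeness of Proposition~\ref{prop:completeness} (plain Lindenbaum--Tarski, no frames), and then perform explicit surgery on the witnessing heterogeneous algebras. For (3): if $\pi\not\vdash\bot$, take a refuting algebra and adjoin a fresh bottom element below the target lattice of $\Diamond$, forcing $\Diamond\pi$ to sit strictly above the new bottom. For (5): if $\top\not\vdash\pi_1$ and $\top\not\vdash\pi_2$, take two refuting algebras, form their product, and adjoin a fresh \emph{join-irreducible} top element $\top'$ above $\mathbb{L}^P_1\times\mathbb{L}^P_2$; one then checks that normality of the modal operators survives (using the join-irreducibility of $\top'$ for the only non-trivial case) and that no $\PP$-formula can evaluate to $\top'$ under the product valuation, so $v'(\pi_1\vee\pi_2)\neq\top'$. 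Your syntactic disjunction-property route for (5) may well be workable, but it is not what the paper does, and you would need an independent cut-elimination or normalisation argument for $\mathbf{L}$ to carry it out.
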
  
\begin{proof} 
1. 
For all $s, t\in \mathbb{L}_S$, 
\begin{center}
\begin{tabular}{cll}
   $f^{-\Diamond}(\top)$ & = & $\bigvee\{f(p)\mid \Diamond p\leq \top\}$\\
   & = & $\bigvee\{f(p)\mid p\in \mathbb{L}\}$\\
   & = & $f(\top)$\\
   & = & $1$\\
\end{tabular}
\end{center}
\begin{center}
\begin{tabular}{cll}
   &$f^{-\Diamond}(s)\wedge f^{-\Diamond}(t)$\\
    = &$ \bigvee\{f(p_1)\mid \Diamond p_1\leq s\} \wedge \bigvee\{f(p_2)\mid \Diamond p_2\leq t\}$\\
        = &$ \bigvee\{f(p_1)\wedge f(p_2)\mid \Diamond p_1\leq s\mbox{ and }\Diamond p_2\leq b\}$ & frame-distributivity \\ %ac2605 $\mathbf{A}$ distributive\\
             = &$ \bigvee\{f(p_1\wedge p_2)\mid \Diamond p_1\leq s\mbox{ and }\Diamond p_2\leq t\}$ & $f$ is an $\mathbf{A}$-filter\\
               = &$ \bigvee\{f(p)\mid \Diamond p\leq s\mbox{ and }\Diamond p\leq t\}$ & ($\ast$)\\
 = &$\bigvee\{f(p)\mid \Diamond p\leq s\wedge t\}$ & \\
=&$f^{-\Diamond}(s\wedge t)$,\\
 \end{tabular}
 \end{center}
 the equivalence marked with ($\ast$) being due to the fact that $\Diamond(p_1\wedge p_2)\leq \Diamond p_1\wedge \Diamond p_2$.

3. %In general, $f^{-\Diamond}$ need not be a \emph{proper} filter, even if $f$ is. However, the second part of the statement  claims that this is the case when $f$ is a proper filter of  $\mathbf{SD}$. Indeed, in this algebra, 
Let $f:\mathbf{PP}\to \mathbf{A}$ be  a proper $\mathbf{A}$-filter.
$f^{-\Diamond}(\bot) = \bigvee\{f([\pi])\mid [\Diamond \pi] \leq [\bot]\} = \bigvee\{f([\pi])\mid \Diamond \pi \vdash \bot \} = \bigvee\{f([\pi])\mid \pi \vdash \bot \} = f([\bot]) = 0$. The crucial inequality is the third to last, which holds since  $\Diamond \pi \vdash \bot$ iff $\pi \vdash \bot$. The right to left implication can be easily derived in 
$\mathbf{L}$.  
For the sake of the left to right implication we appeal to the completeness of 
$\mathbf{L}$ 
with respect to the class of all heterogeneous $\mathcal{L}_{\mathrm{MT}}$-algebras (cf.~Proposition \ref{prop:completeness}, see  \cite{CoPa:non-dist} for the general case) and reason contrapositively. Suppose $\pi \not\vdash \bot$. Then, by Proposition \ref{prop:completeness}, there is a normal heterogeneous  $\mathcal{L}_{\mathrm{MT}}$-algebra $\mathbb{H} = (\mathbb{L}_S, \mathbb{L}_P, \lozenge, \Diamond)$ and assignment $h$ of  atomic propositions   such that $h(\pi) \neq 0$. Now consider the heterogeneous algebra $(\mathbb{L}_S, \mathbb{L}'_P, \lozenge, \Diamond)$ obtained from $(\mathbb{L}_S, \mathbb{L}_P, \lozenge, \Diamond)$ by adding a new least element $0'$ to $\mathbb{L}_P$ and extending the $\Diamond$-operation by declaring $\Diamond 0' = 0'$. We keep the assignment $h$ unchanged. It is easy to check that $(\mathbb{L}_S, \mathbb{L}'_P, \lozenge, \Diamond)$ is a normal heterogeneous  $\mathcal{L}_{\mathrm{MT}}$-algebra, and that $h(\Diamond \pi) \geq 0 > 0'$ and hence $\Diamond \pi \not \vdash \bot$.

Items 2 and 4 are proven by  arguments analogous  to the ones above.

5. %Let us show, preliminarily, that for any $\chi\in \mathcal{L}$, if $\top \not\vdash \chi$  then $\chi$ is provably equivalent to some $\chi'\in \mathcal{L}$ in which $\top$ does not occur. We proceed by induction on $\chi$. The base cases $\chi: = p\in \mathsf{Prop}$ and $\chi: =\bot$ are verified  immediately  (letting $\chi': = \chi$). As to the induction steps,  if $\chi: = \chi_1\vee \chi_2$, then  $\top \not\vdash \chi_1\vee \chi_2$ implies that $\top \not\vdash \chi_i$ for each $i = 1, 2$ and hence the statement immediately follows from the induction hypothesis. 
%If $\chi: = \chi_1\wedge \chi_2$, then  $\top \not\vdash \chi_1\wedge \chi_2$ implies that $\top \not\vdash \chi_i$ for some $i = 1, 2$. If $\top \not\vdash \chi_i$ for each $i = 1, 2$, then we proceed as in the previous case. If $\top \vdash \chi_1$ and  $\top \not\vdash \chi_2$, then $\chi: = \chi_1\wedge \chi_2$ is provably equivalent to $\chi_2$ which, by induction hypothesis, is provably equivalent to some $\chi'\in \mathcal{L}$ in which $\top$ does not occur.   If $\chi: = \Box \chi_1$, then $\top \not\vdash \Box\chi_1$ implies that $\top \not\vdash \chi_1$, and hence the statement immediately follows from the induction hypothesis.  If $\chi: = \Diamond \chi_1$, then the assumption that $\top\vdash \Diamond \top$ is valid  and  $\top \not\vdash \Diamond \chi_1$ imply that $\top \not\vdash \chi_1$, and hence the statement immediately follows from the induction hypothesis. 
% 
Suppose, by contraposition, that $\top \not\vdash \pi_1$ and $\top \not\vdash \pi_2$. %Then, by the claim above, we can assume w.l.o.g.~that $\top$ does not occur in  $\phi$ or in $\psi$. 
By the completeness theorem to which we have appealed in the proof of item 2, there are heterogeneous algebras $\mathbb{H}_1 = (\mathbb{L}^S_1, \mathbb{L}^P_1, \lozenge_1, \Diamond_1)$ and $\mathbb{H}_2 = (\mathbb{L}^S_2, \mathbb{L}^P_2, \lozenge_2, \Diamond_2)$ and corresponding assignments $v_i$ on $\mathbb{H}_i$ such that $v_1(\pi_1) \neq \top^{\mathbb{L}^P_1}$ and $v_2(\pi_2) \neq \top^{\mathbb{L}^P_2}$. Consider the  algebra $\mathbb{H}' = (\mathbb{L}^S_1 \times \mathbb{L}^S_2, \mathbb{L}^{P'}, \Diamond', \lozenge')$, where $\mathbb{L}^{P'}$ is obtained  by adding a new top element $\top'$ to  $\mathbb{L}^P_1\times \mathbb{L}^P_2$, defining the operation $\Diamond' $ by the same assignment of $\Diamond^{\mathbb{H}_1\times \mathbb{H}_2}$ on $\mathbb{L}^P_1\times \mathbb{L}^P_2$ and mapping $\top'$ to $(\Diamond \top)^{\mathbb{L}^S_1\times \mathbb{L}^S_2}$, and the operation $\lozenge' $ by the same assignment of $\lozenge^{\mathbb{H}_1\times \mathbb{H}_2}$ on $\mathbb{L}^S_1\times \mathbb{L}^S_2$. The  monotonicity of $\Diamond'$ and normality (i.e.~finite join-preservation) of $\lozenge'$ follow immediately by construction. The normality (i.e.~finite join-preservation) of $\Diamond'$ is verified by cases: if $a\lor b\neq \top'$, then it immediately follows from the normality of $\Diamond^{\mathbb{H}_1\times\mathbb{H}_2}$.  If $a\lor b=\top'$, then by construction, either $a=\top'$ or $b=\top'$ (i.e.~$\top'$ is join-irreducible), and hence, the join-preservation of $\Diamond'$ is a consequence of its monotonicity. 
%It is easy to check that $\mathbb{C}'$ is a normal lattice expansion. 
Consider  the valuation $v': \mathsf{Prop}\to \mathbb{H}'$ defined by the assignment $p\mapsto e (v_1(p), v_2(p))$, where $e: \mathbb{H}_1\times \mathbb{H}_2\to \mathbb{H}'$ is the natural embedding. %Then, because $\top$ does not occur in  $\phi$ or in $\psi$, by construction $v'(\phi\vee \psi)\leq \top^{\mathbb{C}_1\times \mathbb{C}_2} < \top'$,  and hence $\top \not\vdash \phi\vee \psi$, as required.

%Basic Lemma for disjunction property:
%Let $\mathbb{C}$ be a lattice with normal operators $\Box$ and $\Diamond$. Let $\mathbb{C}'$ be the lattice with adding a new top, denoted $\top'$. We call the old top $\top_\mathbb{C}$. Define $\Box'$ on $\mathbb{C}'$ as $\Box' a: =\Box a$ for every $a\in \mathbb{C}$ and $\Box' \top'=\top'$ and define $\Diamond' a=\Diamond a$ and $\Diamond(\top)=\Diamond(\top_\mathbb{C})$. It is immediate that $\Box'$ is normal box. To show that $\Diamond'$, is normal notice that if $a\lor b=\top$ then either $a=\top$ or $b=\top$ and by the monotonicity of $\Diamond$ $\Diamond'(\top)\geq\Diamond'(a)$ for every $a$.

Let us show, for all $\chi \in \PP$, that if $(v_1(\chi), v_2(\chi)) \neq \top^{\mathbb{L}^P_1 \times \mathbb{L}^P_2}$, then $v'(\chi) \neq \top'$. We proceed by induction on $\chi$. The cases for atomic propositions and conjunction are immediate. The case for $\chi: = \pi_1'\vee \pi_2'$ uses the join-irreducibility of $\top'$. When $\chi := \lozenge \sigma$, then $v'(\lozenge \sigma) = \lozenge' v'(\sigma)\neq \top'$, since,  by construction, $\top'$ is not in the range of $\lozenge'$.

%If $\chi: = \Box \theta$, then $v'(\chi) = v'(\Box \theta) = \Box' v'(\theta)$. Then the assumption that $(v_1(\chi), v_2(\chi)) \neq \top^{\mathbb{C}_1 \times \mathbb{C}_2}$ implies that $v'(\theta) \neq \top'$. Indeed,  if $v'(\theta) = \top'$, then, by induction hypothesis, $(v_1(\theta), v_2(\theta)) = (\top^{\mathbb{C}_1}, \top^{\mathbb{C}_2})$ and hence $(v_1(\Box \theta), v_2(\Box \theta)) = \top^{\mathbb{C}_1 \times \mathbb{C}_2}$. Therefore, from $v'(\theta) \neq \top'$, it follows from the definition of $\Box'$ that $v'(\Box \theta) = \Box' v'(\theta) \neq \top'$, which concludes the proof of the claim.

Clearly, $v_1(\pi_1) \neq \top^{\mathbb{L}^P_1}$ and $v_2(\pi_2) \neq \top^{\mathbb{L}^P_2}$ imply that $(v_1(\pi_1), v_2(\pi_1)) \neq \top^{\mathbb{L}^P_1 \times \mathbb{L}^P_2}$ and $(v_1(\pi_2), v_2(\pi_2)) \neq \top^{\mathbb{L}^P_1 \times \mathbb{L}^P_2}$. So, by the above claim,  $v'(\pi_1) \neq \top'$ and $v'(\pi_2) \neq \top'$, and hence, since $\top'$ is join-irreducible, $v'(\phi \vee \psi) \neq \top'$.

The proof of item 6 is analogous to the one above.
 \end{proof}

%\marginnote{W: Shall we make this a remark or a lemma? This argument feels awfully familiar -- is there not something we can just cite?}

\begin{lemma}\label{eq:premagicnew2} 
%\marginnote{this lemma is true for any  $\mathbf{A}$-subsets $f, u$ }
For any $f\in \mathsf{F}_{\mathbf{A}}(\mathbb{L}_P)$ and $v\in \mathsf{C}_{\mathbf{A}}(\mathbb{L}_S)$ and  $g\in \mathsf{F}_{\mathbf{A}}(\mathbb{L}_S)$  and $u\in \mathsf{C}_{\mathbf{A}}(\mathbb{L}_P)$,
\begin{enumerate}
\item $\bigwedge_{s\in \mathbb{L}_S}(f^{-\Diamond}(s)\to  v(s)) = \bigwedge_{p\in \mathbb{L}_P}(f(p)\to  v(\Diamond p))$;
 \item $\bigwedge_{p\in \mathbb{L}_P}(g^{-\lozenge}(p)\to  u(p)) = \bigwedge_{s\in \mathbb{L}_S}(g(s)\to  u(\lozenge s))$.
%\item $\bigwedge_{s\in \mathbb{L}_S} (f^{-\Diamond}(s)\to v(s)) = \bigwedge_{p\in \mathbb{L}_P} (f(p)\to v(\Diamond p))$;
%\item $\bigwedge_{p\in \mathbb{L}_P} (g^{-\lozenge}(p)\to u(p)) = \bigwedge_{s\in \mathbb{L}_S} (g(s)\to u(\lozenge s))$.
\end{enumerate}
\end{lemma}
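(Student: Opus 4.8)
The plan is to prove item 1 (item 2 being entirely symmetric, swapping the roles of $\mathbb{L}_S$ and $\mathbb{L}_P$, of $\Diamond$ and $\lozenge$, and of the relevant filters/ideal-complements). The identity $f^{-\Diamond}(s)=\bigvee\{f(p)\mid \Diamond p\leq s\}$ together with the inequality $f(p)\leq f^{-\Diamond}(\Diamond p)$ already recorded before the lemma will do most of the work; the two directions of the claimed equality come from playing this off against the residuation law $\alpha\otimes\beta\leq\gamma \iff \alpha\leq\beta\to\gamma$ and the adjunction between $\bigvee$ in the argument and $\bigwedge$ over a family.

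First I would prove the inequality $\bigwedge_{p\in \mathbb{L}_P}(f(p)\to v(\Diamond p)) \leq \bigwedge_{s\in \mathbb{L}_S}(f^{-\Diamond}(s)\to v(s))$. Fix $s\in\mathbb{L}_S$; it suffices to show that the left-hand meet, call it $\delta$, satisfies $\delta\leq f^{-\Diamond}(s)\to v(s)$, i.e.\ by residuation $\delta\otimes f^{-\Diamond}(s)\leq v(s)$. Now $f^{-\Diamond}(s)=\bigvee\{f(p)\mid \Diamond p\leq s\}$, and since $\otimes$ distributes over arbitrary joins (frame-distributivity of $\mathbf{A}$), $\delta\otimes f^{-\Diamond}(s)=\bigvee\{\delta\otimes f(p)\mid \Diamond p\leq s\}$, so it is enough to check $\delta\otimes f(p)\leq v(s)$ for each $p$ with $\Diamond p\leq s$. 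For such a $p$, from $\delta\leq f(p)\to v(\Diamond p)$ we get $\delta\otimes f(p)\leq v(\Diamond p)$, and since $v$ is a complement of an $\mathbf{A}$-ideal it is monotone (it is $\vee$-preserving, hence order-preserving), so $v(\Diamond p)\leq v(s)$ because $\Diamond p\leq s$. Chaining these gives the desired bound.

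For the converse inequality $\bigwedge_{s\in \mathbb{L}_S}(f^{-\Diamond}(s)\to v(s)) \leq \bigwedge_{p\in \mathbb{L}_P}(f(p)\to v(\Diamond p))$, fix $p\in\mathbb{L}_P$ and instantiate the left-hand meet at the particular element $s:=\Diamond p\in\mathbb{L}_S$, obtaining $\bigwedge_{s}(f^{-\Diamond}(s)\to v(s)) \leq f^{-\Diamond}(\Diamond p)\to v(\Diamond p)$. Since $f(p)\leq f^{-\Diamond}(\Diamond p)$ (this is the inequality noted right before the lemma, which holds because $\Diamond p\leq\Diamond p$ puts $f(p)$ in the defining join of $f^{-\Diamond}(\Diamond p)$) and $\to$ is antitone in its first argument, we get $f^{-\Diamond}(\Diamond p)\to v(\Diamond p)\leq f(p)\to v(\Diamond p)$, and combining yields the bound for this $p$; taking the meet over all $p$ finishes the direction.

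The only subtlety — which I would flag as the point needing care rather than a genuine obstacle — is the use of frame-distributivity $\delta\otimes\bigvee_i\alpha_i=\bigvee_i(\delta\otimes\alpha_i)$ in the first direction; this is exactly the ``frame-distributive'' hypothesis imposed on $\mathbf{A}$ at the start of the many-valued section, so it is available. Everything else is residuation and monotonicity. Note that properness of the filters and ideal-complements is not actually needed for this lemma (only $\wedge$/$\vee$-preservation and the monotonicity it entails), so I would state the proof using just those properties; the role of $\mathsf{F}_{\mathbf{A}}$ and $\mathsf{C}_{\mathbf{A}}$ here is merely to fix the type of the arguments. Finally I would remark that item 2 follows by the same argument after the evident substitutions.
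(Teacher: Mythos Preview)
Your proof is correct and follows essentially the same approach as the paper's: the easy direction uses $f(p)\leq f^{-\Diamond}(\Diamond p)$ together with antitonicity of $\to$ in its first argument, while the harder direction unfolds $f^{-\Diamond}(s)$ as a join and then uses monotonicity of $v$. The only cosmetic difference is that where you use residuation plus frame-distributivity of $\otimes$ over joins, the paper uses the equivalent fact that $\to$ is completely join-reversing in its first coordinate to rewrite $f^{-\Diamond}(s)\to v(s)=\bigwedge_{\Diamond q\leq s}(f(q)\to v(s))$ directly; these are two formulations of the same step.
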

\begin{proof} 1. The fact that
$f(p)\leq f^{-\Diamond}(\Diamond p)$ implies that  
$f^{-\Diamond}(\Diamond p)\to v(\Diamond p)\leq f(p)\to v(\Diamond p)$ for every $p\in \mathbb{L}_P$, which is enough to show that $\bigwedge_{s\in \mathbb{L}_S} (f^{-\Diamond}(s)\to v(s))\leq \bigwedge_{p\in \mathbb{L}_P} (f(p)\to v(\Diamond p))$. Conversely, to show that \[ \bigwedge_{p\in \mathbb{L}_P} (f(p)\to v(\Diamond p))\leq \bigwedge_{s\in \mathbb{L}_S} (f^{-\Diamond}(s)\to v(s)),\] 
we have to show that, for every $s\in \mathbb{L}_S$,
 \[\bigwedge_{p\in \mathbb{L}_P} (f(p)\to v(\Diamond p))\leq f^{-\Diamond}(s)\to v(s),\] 
 i.e.~by definition of $f^{-\Diamond}(s)$ and the fact that $\to$ is completely join-reversing in its first coordinate,
  \[\bigwedge_{p\in \mathbb{L}_P} (f(p)\to v(\Diamond p))\leq \bigwedge_{\Diamond q\leq s} (f(q)\to v(s)).\] 
  Hence, let $q\in \mathbb{L}_P$ such that $\Diamond q\leq s$, and let us show that   \[\bigwedge_{p\in \mathbb{L}_P} (f(p)\to v(\Diamond p))\leq f(q)\to v(s).\] 
 Since $v$ is $\vee$-preserving, hence order-preserving, $\Diamond q\leq s$ implies $v(\Diamond q)\leq v(s)$, hence
 \[\bigwedge_{p\in \mathbb{L}_P} (f(p)\to v(\Diamond p))\leq  f(q)\to v(\Diamond q) \leq  f(q)\to v(s),\]
 as required.  The proof of the second item is analogous and omitted.
\end{proof}
\begin{definition}
\label{def:canonical frame}
Let $(\mathbf{SD}, \mathbf{PP}, \lozenge, \Diamond)$ be the Lindenbaum-Tarski heterogeneous algebra  of $\mathcal{L}_{\mathrm{MT}}$-formulas.\footnote{In the remainder of this section, we abuse notation and identify formulas with their equivalence class in $(\mathbf{SD}, \mathbf{PP}, \lozenge, \Diamond)$. Also, notice the inversion: states in  $Z^S$ (resp.~$Z^P$) are built out of structures from  $\mathbf{PP}$ (resp.~$\mathbf{SD}$).}	The {\em canonical} graph-based $\mathbf{A}$-{\em frame} is the structure  $\mathbb{G} = (\mathbb{X}_S, \mathbb{X}_P, R_{\Diamond}, R_\lozenge)$  defined as follows:\footnote{Recall that for any set $W$, the $\mathbf{A}$-{\em subsethood} relation between elements of $\mathbf{A}$-subsets of $W$ is the map $S_W:\mathbf{A}^W\times \mathbf{A}^W\to \mathbf{A}$ defined as $S_W(f, g) :=\bigwedge_{w\in W}(f(w)\rightarrow g(w)) $. %and the $\mathbf{A}$-{\em identity} relation between elements of $\mathbf{A}^W$ is the map $\approx_W:\mathbf{A}^W\times \mathbf{A}^W\to \mathbf{A}$ defined as $f\approx_W g :=\bigwedge_{w\in W}(f(w)\leftrightarrow g(w)) $. 
If $S_W(f, g) =1$ we also write $f\subseteq g$. 
}
	\[Z^S:=\Big\{(f,u)\in \mathsf{F}_{\mathbf{A}}(\mathbf{PP})\times  \mathsf{C}_{\mathbf{A}}(\mathbf{PP})\mid \bigwedge_{\pi\in \mathbf{PP}}(f(\pi)\to u(\pi))=1\Big\}.\]
	
	\[Z^P:=\Big\{(g, v)\in \mathsf{F}_{\mathbf{A}}(\mathbf{SD})\times  \mathsf{C}_{\mathbf{A}}(\mathbf{SD})\mid \bigwedge_{\sigma\in \mathbf{SD}}(g(\sigma)\to v(\sigma))=1\Big\}.\]
	For any $z\in Z^S$ (resp.~$z\in Z^P$) as above, we let $f_z$ and $u_z$ (resp.~$g_z$ and $v_z$) denote the first and second  coordinate of $z$, respectively. 
Then $E_P: Z^P\times Z^P\to \mathbf{A}$,  $E_S: Z^S\times Z^S\to \mathbf{A}$, $R_\lozenge: Z^S\times Z^P\to \mathbf{A}$ and $R_\Diamond: Z^P\times Z^S\to \mathbf{A}$ 
%Then $E: Z\times Z\to \mathbf{A}$ and $R_\Diamond :Z\times Z\to \mathbf{A}$ 
	are  defined as follows: \[E_S(z, z'): = \bigwedge_{\pi\in \mathbf{PP}}(f_z(\pi)\to u_{z'}(\pi));\]
	\[E_P(z, z'): = \bigwedge_{\sigma\in \mathbf{SD}}(g_z(\sigma)\to v_{z'}(\sigma));\]
	 \[R_\Diamond(z = (g_z, v_z), z' = (f_{z'}, u_{z'})) := \bigwedge_{\sigma\in \mathbf{SD}}(f_{z'}^{-\Diamond}(\sigma)\to v_z(\pi)) = \bigwedge_{\pi\in \mathbf{PP}}(f_{z'}(\pi)\to v_z(\Diamond\pi));\]
 \[R_\lozenge(z, z') := \bigwedge_{\pi\in \mathbf{PP}}(g_{z'}^{-\lozenge}(\pi)\to u_z(\pi)) = \bigwedge_{\sigma\in \mathbf{SD}}(g_{z'}(\sigma)\to u_z(\lozenge\sigma)).\]

	\end{definition}
	\begin{lemma}
	The structure $\mathbb{G}$ of Definition \ref{def:canonical frame}
 is a graph-based $\mathbf{A}$-frame, in the sense specified at the beginning of the present section.
	\end{lemma}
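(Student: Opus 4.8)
The plan is to verify that the structure $\mathbb{G} = (\mathbb{X}_S, \mathbb{X}_P, R_{\Diamond}, R_\lozenge)$ of Definition \ref{def:canonical frame} satisfies the two defining requirements of a graph-based $\mathbf{A}$-frame in the sense of this section: first, that $E_S$ and $E_P$ are reflexive $\mathbf{A}$-relations on their respective state spaces, and second, that $R_\lozenge$ and $R_\Diamond$ satisfy the $I$-compatibility conditions (the stability conditions on images of singleton-generated $\mathbf{A}$-subsets). I would also remark at the outset that $Z^S$ and $Z^P$ are nonempty: this follows from Lemma \ref{lemma:f minus diam is filter} together with the existence of proper $\mathbf{A}$-filters and complements of proper $\mathbf{A}$-ideals on $\mathbf{PP}$ and $\mathbf{SD}$ (e.g.\ take the characteristic filter and ideal-complement of a prime-like pair, using items 5 and 6 of that lemma to witness properness in the presence of $\top$).

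For reflexivity of $E_S$, I need $\Delta_{Z^S} \subseteq E_S$, i.e.\ $E_S(z,z) = \top$ for every $z = (f_z, u_z) \in Z^S$; but by definition $E_S(z,z) = \bigwedge_{\pi \in \mathbf{PP}}(f_z(\pi) \to u_z(\pi))$, which equals $1$ precisely because $(f_z, u_z) \in Z^S$ was defined by exactly that constraint. The same argument handles $E_P$. This step is essentially immediate from the definition of the state spaces. The content is entirely in the compatibility conditions.

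For the $I$-compatibility of $R_\lozenge$ and $R_\Diamond$, the key observation is that the two alternative expressions for each relation given in Definition \ref{def:canonical frame} — e.g.\ $R_\Diamond(z,z') = \bigwedge_{\sigma}(f_{z'}^{-\Diamond}(\sigma) \to v_z(\sigma)) = \bigwedge_{\pi}(f_{z'}(\pi) \to v_z(\Diamond\pi))$ — are exactly what Lemma \ref{eq:premagicnew2} guarantees are equal, using that $f_{z'}^{-\Diamond}$ is a (proper) $\mathbf{A}$-filter by Lemma \ref{lemma:f minus diam is filter}. The plan is to follow the template used for the analogous crisp and many-valued single-type results (cf.\ \cite[Section 7.2]{roughconcepts} and \cite[Section 3]{eusflat}): one shows that $R_\lozenge$ factors through the $E$-relations in a way that makes the required images automatically Galois-stable. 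Concretely, for each compatibility inequality such as $(R_\Diamond^{[1]}[\{\beta/z'\}])^{[10]} \subseteq R_\Diamond^{[1]}[\{\beta/z'\}]$, I would unfold the definitions of the liftings $J_{R_\Diamond}$ and of the Galois maps $(\cdot)^{[0]},(\cdot)^{[1]}$ associated with $\mathbb{P}_{\mathbb{X}_S}$, and then use the residuation laws in $\mathbf{A}$ together with the identity from Lemma \ref{eq:premagicnew2} to rewrite $R_\Diamond^{[1]}[\{\beta/z'\}]$ as an $E_S$-image of a fixed $\mathbf{A}$-subset; since $E$-images are always Galois-stable (they are closed subsets of the concept lattice by construction), the desired inclusion — in fact equality — follows. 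The symmetric conditions for the other three images, and the ones for $R_\lozenge$, go through by the dual computation using item 2 of Lemma \ref{eq:premagicnew2} and items 2, 4 of Lemma \ref{lemma:f minus diam is filter}.

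The main obstacle I anticipate is purely bookkeeping: correctly juggling the type-inverted conventions of this section (where $Z_A := Z$, $Z_X := \mathbf{A} \times Z$, reversing the convention of Definition \ref{def:A-lifting of a graph}), keeping straight which of $\mathbf{SD}$, $\mathbf{PP}$ indexes which state space, and tracking the residuation manipulations in $\mathbf{A}$ so that the two forms of $R_\Diamond$ and $R_\lozenge$ line up with the liftings $I_{R}$, $J_{R}$ exactly. There is no genuinely new idea beyond what Lemmas \ref{lemma:f minus diam is filter} and \ref{eq:premagicnew2} already supply; the proof is a verification that these lemmas were set up precisely to make the compatibility conditions hold, and I would present it by reducing each condition to an instance of ``an $E$-image is stable'' after an application of the relevant magic-lemma identity.
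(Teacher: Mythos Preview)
Your overall strategy---show that each $R_\Diamond^{[i]}[\{\beta/\cdot\}]$ and $R_\lozenge^{[i]}[\{\beta/\cdot\}]$ coincides with an $E$-image of a singleton, hence is Galois-stable---is exactly the paper's approach, and the reflexivity observation is correct and immediate. What you have glossed over, however, is the actual content of the proof: exhibiting the witness states that realise this rewriting. The paper does not do this abstractly; for each compatibility condition it works pointwise at an arbitrary $w$ and finds a \emph{specific} state $z'$ in the canonical frame such that (for instance) $E_P(z',w)=R_\Diamond(w,z)$ or $E_S(w,z')=R_\Diamond(z,w)$, after which the required inequality reduces to the residuated-lattice tautology $\beta\leq (\beta\to c)\to c$.

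The two witness constructions are asymmetric, and this is where you have under-budgeted. For $(R_\Diamond^{[0]}[\{\beta/z\}])^{[01]}\subseteq R_\Diamond^{[0]}[\{\beta/z\}]$ one takes any $z'\in Z^P$ with $g_{z'}=f_z^{-\Diamond}$; this uses items 1 and 3 of Lemma~\ref{lemma:f minus diam is filter} together with Lemma~\ref{eq:premagicnew2}, as you anticipated. But for $(R_\Diamond^{[1]}[\{\beta/(\alpha,z)\}])^{[10]}\subseteq R_\Diamond^{[1]}[\{\beta/(\alpha,z)\}]$ the witness lives on the \emph{other side}: one needs $z'\in Z^S$ with $u_{z'}(\pi)=v_z(\Diamond\pi)$ for $\pi\neq\top$ and $u_{z'}(\top)=1$. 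Checking that this $u_{z'}$ is $\vee$-preserving is precisely where item~5 of Lemma~\ref{lemma:f minus diam is filter} (join-irreducibility of $\top$ in $\mathbf{PP}$) is required---and you only invoked items 5--6 in your aside about nonemptiness, not here. Without this ingredient the second and third compatibility conditions do not go through, so your sketch is missing a genuine step rather than merely suppressing bookkeeping.
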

	\begin{proof}
%ac2605 I put these conditions here because we are unflipped in Appendix B so it is nice to have it written down even if we don't show it
We need to show that $R_{\Diamond}$ and $R_{\lozenge}$  satisfy the following compatibility conditions: for every $z\in Z^P$ and all $\alpha, \beta\in \mathbf{A}$,
\begin{align*}
(R_\Diamond^{[1]}[\{\beta / (\alpha, z) \}])^{[10]} &\subseteq R_\Diamond^{[1]}[\{\beta /  (\alpha, z) \}] \\
(R_\lozenge^{[0]}[\{\beta /  z \}])^{[01]} &\subseteq R_\lozenge^{[0]}[\{\beta / z \}],
\end{align*}
and for every $z\in Z^S$ and all $\alpha, \beta\in \mathbf{A}$,
\begin{align*}
(R_\lozenge^{[1]}[\{\beta / (\alpha, z) \}])^{[10]} &\subseteq R_\lozenge^{[1]}[\{\beta /  (\alpha, z) \}] \\
(R_\Diamond^{[0]}[\{\beta /  z \}])^{[01]} &\subseteq R_\Diamond^{[0]}[\{\beta / z \}].
\end{align*}
Let us show the fourth inclusion  above. %for $R_{\Diamond}$, 
By definition, for any $(\alpha, w)\in Z^P_X$,
\begin{center}
\begin{tabular}{r c l}
$R_{\Diamond}^{[0]}[\{\beta / z \}](\alpha, w)$ 
& = & $\bigwedge_{z'\in Z^S_A}[\{\beta / z \}(z')\to  (R_{\Diamond}(w, z')\to \alpha)]$\\
&  = & $\beta\to (R_{\Diamond}(w, z) \to \alpha)$\\
 & & \\
  $(R_\Diamond^{[0]}[\{\beta /  z \}])^{[01]} (\alpha, w)$ & = & $\bigwedge_{z'\in Z^P_A}[(R_\Diamond^{[0]}[\{\beta /  z \}])^{[0]}(z')\to (E_P(z', w)\to \alpha) ],$ \\
%  = & $\bigwedge_{z'\in Z}[\left(\bigwedge_{(\gamma, v)\in Z_X}[\beta\to (R_\Diamond(v, z)\to\gamma)]\to $\\
%  & $(E(z', v)\to\gamma)\right)\to (E(z', w)\to \alpha) ]$ \\
\end{tabular}
\end{center}
and hence it is enough to find some $z'\in Z^P_A$ such that 
\[(R_\Diamond^{[0]}[\{\beta /  z \}])^{[0]}(z')\to (E_P(z', w)\to \alpha)\leq \beta\to (R_{\Diamond}(w, z) \to \alpha),\]
i.e.
\begin{equation}
\label{eq:blurb}
%\begin{tabular}{cl l}
%&
\left(\bigwedge_{(\gamma, z'')\in Z^P_X}[\beta\to (R_\Diamond(z'', z)\to\gamma)]\to (E_P(z', z'')\to\gamma)\right) %\\
%  & 
  \to (E_P(z', w)\to \alpha) \leq \beta\to (R_{\Diamond}(w, z) \to \alpha).  %& ($\ast$) \\
%\end{tabular}
\end{equation}
Let $z'\in Z_P^A$ such that $g_{z'} = f_z^{-\Diamond}$ (cf.~Lemma \ref{lemma:f minus diam is filter}). Then
\begin{center}
%\begin{tabular}{rcl}
$E_P(z', w) = \bigwedge_{\sigma\in \mathbf{SD}}(f_z^{-\Diamond}(\sigma)\to v_{w}(\sigma)) =  R_{\Diamond}(w, z)$,
%\end{tabular}
\end{center}
and likewise $E_P(z', z'') = R_{\Diamond}(z'', z)$. Therefore, for this choice of $z'$, inequality \eqref{eq:blurb} can be rewritten as follows:
%\begin{center}
%\begin{tabular}{cl }
\[\left(\bigwedge_{(\gamma, z'')\in Z^P_X}[\beta\to (R_\Diamond(z'', z)\to\gamma)]\to (R_{\Diamond}(z'', z)\to\gamma)\right)\to (R_{\Diamond}(w, z)\to \alpha) \leq \beta\to (R_{\Diamond}(w, z) \to \alpha)\]
%\end{tabular}
%\end{center}
The inequality above is true if \[\beta \leq \bigwedge_{(\gamma, z'')\in Z_X}[\beta\to (R_\Diamond(z'', z)\to\gamma)]\to (R_{\Diamond}(z'', z)\to\gamma),\]
i.e.~if for every $(\gamma, z'')\in Z^P_X$, 
\[\beta \leq [\beta\to (R_\Diamond(z'', z)\to\gamma)]\to (R_{\Diamond}(z'', z)\to\gamma),\]
which is an instance of a tautology in residuated lattices.

Let $z\in Z^P$ and  $\alpha, \beta\in \mathbf{A}$ and let us show that $(R_\Diamond^{[1]}[\{\beta / (\alpha, z) \}])^{[10]} \subseteq R_\Diamond^{[1]}[\{\beta /  (\alpha, z) \}]$. By definition, for every $w\in Z^S_A$,
\begin{center}
\begin{tabular}{r cl}
 $R_\Diamond^{[1]}[\{\beta /  (\alpha, z)\}] (w)$
&= &$\bigwedge_{(\gamma, z')\in Z^P_X}[\{\beta /  (\alpha, z)\}(\gamma, z')\to (R_{\Diamond}(z', w)\to \gamma)]$\\
&= &$\beta\to (R_{\Diamond}(z, w)\to \alpha)$\\
&&\\
 $(R_\Diamond^{[1]}[\{\beta / (\alpha, z) \}])^{[10]}(w)$
&= &$\bigwedge_{(\gamma, z')\in Z^S_X}[(R_\Diamond^{[1]}[\{\beta / (\alpha, z) \}])^{[1]}(\gamma, z')\to (E_S(w, z')\to \gamma)]$.\\
\end{tabular}
\end{center}
Hence it is enough to find some $(\gamma, z')\in Z^S_X$ such that 
\[(R_\Diamond^{[1]}[\{\beta / (\alpha, z) \}])^{[1]}(\gamma, z')\to (E_S(w, z')\to \gamma)\leq \beta\to ( R_{\Diamond}(z, w)\to \alpha),\]
i.e.
%\begin{center}
%\begin{tabular}{cll}
 \begin{equation}
 \label{eq:blurbb}
 \left( \bigwedge_{z''\in Z^S_A} (\beta\to (R_{\Diamond}(z, z'')\to \alpha))\to (E_S(z'', z')\to \gamma)\right)\to (E_S(w, z')\to \gamma)\leq \beta\to (R_{\Diamond}(z, w)\to \alpha).
\end{equation}

Let $\gamma: = \beta$, and $z'= (f_{z'}, u_{z'})\in Z^S$  such that $u_{z'}: \mathbf{PP}\to\mathbf{A}$ %is  defined by the assignment $\pi\mapsto v_z(\Diamond\pi)$. 
 is  defined by the assignment 
\[
u_{z'}(\pi) = \left\{\begin{array}{ll}
1 & \text{if }  \top\vdash \pi\\
v_z(\Diamond\pi) & \text{otherwise. } \\
\end{array}\right.
\]
By construction, $u_{z'}$  maps $\top$ to 1 and $\bot$ to 0; moreover, using Lemma \ref{lemma:f minus diam is filter}.5, it can be readily verified that $u_{z'}$ is $\vee$-preserving. Then, by Lemma \ref{eq:premagicnew2},

\[E_S(z'', z'): = \bigwedge_{\pi\in \mathbf{PP}}(f_{z''}(\pi)\to u_{z'}(\pi)) = \bigwedge_{\pi\in \mathbf{PP}}(f_{z''}(\pi)\to v_{z}(\Diamond\pi))  = \bigwedge_{\sigma\in \mathbf{SD}}(f_{z''}^{-\Diamond}(\sigma)\to v_z(\pi))  = R_{\Diamond}(z, z''),\]
and likewise $E(w, z') = R_{\Diamond}(z, w)$. Therefore, for this choice of $z'$, inequality \eqref{eq:blurbb} can be rewritten as follows:
%\begin{center}
%\begin{tabular}{cll}
\[\left( \bigwedge_{z''\in Z^S_A} (\beta\to (R_{\Diamond}(z, z'')\to \alpha))\to (R_{\Diamond}(z, z'')\to \gamma)\right) \to (R_{\Diamond}(z, w)\to \gamma)\leq \beta\to (R_{\Diamond}(z, w)\to \alpha),\]
%\end{tabular}
%\end{center}
which is shown to be true by the same argument as the one concluding the verification of  the previous inclusion. The remaining inclusions are verified with analogous arguments to those above (using Lemma \ref{lemma:f minus diam is filter}.6), and their proofs are omitted.
\end{proof}

\begin{definition}
\label{def:canonical model}
Let $(\mathbf{SD}, \mathbf{PP}, \lozenge, \Diamond)$ be the Lindenbaum-Tarski heterogeneous algebra  of $\mathcal{L}_{\mathrm{MT}}$-formulas.	The {\em canonical graph-based} $\mathbf{A}$-{\em model} is the structure  $\mathbb{M} =(\mathbb{G}, V)$ such that $\mathbb{G}$ is the canonical graph-based $\mathbf{A}$-frame of Definition \ref{def:canonical frame}, and  if $p \in \mathsf{Prop}$, then $V_S: \mathsf{Prop}\to \mathbb{X}_P^+$ and $V_P: \mathsf{Prop}\to \mathbb{X}_S^+$ are such that:
\begin{enumerate}
\item  $V_S(p) = (\val{p}_S, \descr{p}_S)$ with $\val{p}_S: Z^P_A\to \mathbf{A}$ and $\descr{p}_S: Z^P_X\to \mathbf{A}$ defined by  $z\mapsto g_z(p)$ and  $(\alpha, z)\mapsto v_z(p)\to \alpha$, respectively;
 
\item $V_P(p) = (\val{p}_P, \descr{p}_P)$ with $\val{p}_P: Z^S_A\to \mathbf{A}$ and $\descr{p}_P: Z^S_X\to \mathbf{A}$  defined by  $z\mapsto f_z(p)$ and  $(\alpha, z)\mapsto u_z(p)\to \alpha$, respectively.
\end{enumerate}
\end{definition}
\begin{lemma}
	The structure $\mathbb{G}$ of Definition \ref{def:canonical model}
 is a graph-based $\mathbf{A}$-model.
	\end{lemma}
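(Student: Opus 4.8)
The plan is to verify that the valuation $V = (V_S, V_P)$ specified in Definition \ref{def:canonical model} on proposition variables is a well-defined homomorphism of heterogeneous algebras, i.e.\ that it extends compositionally to all of $\mathcal{L}_{\mathsc{MT}}$ in a way compatible with the complex algebra operations of Definition \ref{def:graph:based:frame:and:model} (with the modified context convention adopted at the start of this section). Concretely, for each $p\in\mathsf{Prop}$ I must check that the pairs $(\val{p}_S,\descr{p}_S)$ and $(\val{p}_P,\descr{p}_P)$ are genuine formal $\mathbf{A}$-concepts of the respective polarities $\mathbb{P}_{\mathbb{X}_P}$ and $\mathbb{P}_{\mathbb{X}_S}$, i.e.\ that $\val{p}_S^{[1]} = \descr{p}_S$ and $\descr{p}_S^{[0]} = \val{p}_S$ (and symmetrically on the $P$-side). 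Once the base case is a concept, the compositional clauses of the valuation (for $\bot,\top,\wedge,\vee,\lozenge,\Diamond$) automatically produce concepts and respect the algebra structure, so the bulk of the work sits in this base case.

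First I would unpack the stability requirement on, say, $V_S(p)$. Using the modified convention $Z^P_A := Z^P$, $Z^P_X := \mathbf{A}\times Z^P$ and $I_{E_P}(z,(\alpha,z')) = E_P(z,z')\to\alpha$, the map $\val{p}_S^{[1]}: Z^P_X\to\mathbf{A}$ sends $(\alpha,z)$ to $\bigwedge_{z'\in Z^P}\big(g_{z'}(p)\to (E_P(z',z)\to\alpha)\big)$, and I need this to equal $\descr{p}_S(\alpha,z) = v_z(p)\to\alpha$. The key identity is that $\bigwedge_{z'\in Z^P}\big(g_{z'}(p)\to (E_P(z',z)\to\alpha)\big) = \big(\bigvee_{z'}(g_{z'}(p)\otimes E_P(z',z))\big)\to\alpha$ by residuation and frame-distributivity, so it suffices to prove $\bigvee_{z'\in Z^P}(g_{z'}(p)\otimes E_P(z',z)) = v_z(p)$. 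The inequality $\geq$ follows by taking $z' = z$ and using reflexivity $E_P(z,z)=\top$ together with $g_z(p)\leq v_z(p)$ (which is exactly the defining condition $\bigwedge_\sigma(g_z(\sigma)\to v_z(\sigma))=1$ of $Z^P$, instantiated at $p$; note $g_z(p)\otimes\top = g_z(p)\leq v_z(p)$ requires $g_z(p)\to v_z(p)=1$, valid). For $\leq$, I would use that $E_P(z',z) = \bigwedge_\sigma(g_{z'}(\sigma)\to v_z(\sigma))\leq g_{z'}(p)\to v_z(p)$, whence $g_{z'}(p)\otimes E_P(z',z)\leq g_{z'}(p)\otimes(g_{z'}(p)\to v_z(p))\leq v_z(p)$ by the standard residuation inequality $a\otimes(a\to b)\leq b$. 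The dual check $\descr{p}_S^{[0]} = \val{p}_S$ — i.e.\ $\bigwedge_{(\alpha,z')\in Z^P_X}\big((v_{z'}(p)\to\alpha)\to (E_P(z,z')\to\alpha)\big) = g_z(p)$ — is handled by a symmetric argument: the $\leq$ direction by choosing a suitable $(\alpha,z')$ (namely $z'=z$, $\alpha = v_z(p)$, then using that $\bigvee$ over relevant $\alpha$ recovers things and $E_P(z,z)=\top$), and $\geq$ by a filter-style computation using that $g_z$ is $\wedge$- and $\top$-preserving. The $P$-side (with $f_z, u_z, E_S$) is literally the same argument with symbols swapped.

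The remaining clauses are then essentially bookkeeping: I would observe that the complex-algebra operations $\wedge$ (meet of $\val{\cdot}$ components followed by closure), $\vee$ (meet of $\descr{\cdot}$ components followed by closure), and the modalities $\lozenge,\Diamond$ (defined via $R_\lozenge^{[0]}, R_\Diamond^{[0]}$ and closure) always output stable pairs by Lemma \ref{prop:fplus} and its heterogeneous analogue, and that these are precisely the clauses appearing in the valuation recursion of Section \ref{sec:interpretation}; hence $V$ as defined on $\mathsf{Prop}$ determines a unique homomorphism of heterogeneous algebras $\mathcal{L}_{\mathsc{MT}}\to\mathbb{G}^+$, which is exactly what it means for $\mathbb{M}$ to be a graph-based $\mathbf{A}$-model. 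I expect the main obstacle to be the careful handling of the residuation/frame-distributivity manipulations in the stability proof — in particular getting the quantifier over $\alpha\in\mathbf{A}$ in the $(\cdot)^{[0]}$ direction to collapse correctly, which relies on the specific form $I_{E}(z,(\alpha,z')) = E(z,z')\to\alpha$ and on $\mathbf{A}$ being a complete, (dually) frame-distributive residuated lattice so that infinitary distribution laws apply; the order-theoretic content is otherwise shallow, resting on reflexivity of $E_S, E_P$ and the defining inequalities of the states in $Z^S, Z^P$.
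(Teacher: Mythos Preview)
Your reformulation via residuation and frame-distributivity is clean: reducing $\val{p}_S^{[1]}(\alpha,z)=v_z(p)\to\alpha$ to the identity $\bigvee_{z'\in Z^P}\big(g_{z'}(p)\otimes E_P(z',z)\big)=v_z(p)$ is correct, and your argument for the upper bound ($\leq$) is exactly right. The gap is in the lower bound. Taking $z'=z$ gives the term $g_z(p)\otimes E_P(z,z)=g_z(p)\otimes 1=g_z(p)$, and you yourself observe $g_z(p)\leq v_z(p)$; but that is the wrong direction --- you need a witness $z'$ for which the term is \emph{at least} $v_z(p)$, and $g_z(p)$ can be strictly below $v_z(p)$. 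The same problem recurs in your dual check: with $z'=z$ and $\alpha=v_z(p)$ the term $(v_z(p)\to v_z(p))\to(E_P(z,z)\to v_z(p))$ evaluates to $v_z(p)$, which is $\geq g_z(p)$, again the wrong direction for showing the infimum is $\leq g_z(p)$.

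The paper's proof supplies exactly the missing witness. For the first stability equation it takes $z':=(g_p,v)$ where $g_p$ is the principal $\mathbf{A}$-filter at $p$ (i.e.\ $g_p(\sigma)=1$ if $p\vdash\sigma$, else $0$) and $v$ is constant $1$; then $g_{z'}(p)=1$ and $E_P(z',z)=\bigwedge_{p\vdash\sigma}v_z(\sigma)=v_z(p)$ (by monotonicity of $v_z$), so the term equals $v_z(p)$ on the nose. For the second equation it takes $\alpha:=g_z(p)$ and a state $z'$ whose complement-of-ideal component is tailored to $p$ (mapping $\sigma$ to $0$ if $\sigma\vdash\bot$, to $g_z(p)$ if $\sigma\vdash p$ and $\sigma\not\vdash\bot$, and to $1$ otherwise), forcing the term to collapse to $g_z(p)$. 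These constructed states are the substantive content of the lemma; without them your argument does not close.
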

	\begin{proof}
It is enough to show that for any $p \in \mathsf{Prop}$,
\begin{enumerate}
\item $\val{p}_P^{[1]}=\descr{p}_P$ and $\val{p}_P=\descr{p}_P^{[0]}$.
\item $\val{p}_S^{[1]}=\descr{p}_S$ and $\val{p}_S=\descr{p}_S^{[0]}$, and
\end{enumerate}  
We only show 1. To show that $\descr{p}_P (\alpha, z)\leq \val{p}_P^{[1]}(\alpha, z)$ for any $(\alpha, z)\in Z^S_X$, by definition, we need to show that
\[u_z(p)\to\alpha\leq \bigwedge_{z'\in Z^S_A}(\val{p}_P(z')\to (E_S(z', z)\to\alpha)),\]
i.e.~that for every $z'\in Z^S_A$, 
\[u_z(p)\to\alpha\leq \val{p}_P(z')\to (E_S(z', z)\to\alpha).\]
By definition, the inequality above is equivalent to
\[u_z(p)\to\alpha\leq f_{z'}(p)\to \left(\bigwedge_{\pi\in \mathbf{PP}} (f_{z'}(\pi)\to u_z(\pi))\to\alpha\right).\]
Since $\bigwedge_{\pi\in \mathbf{PP}} (f_{z'}(\pi)\to u_z(\pi))\leq f_{z'}(p)\to u_z(p)$ and $\to$ is order-reversing in its first coordinate, it is enough to show that 
\[u_z(p)\to\alpha\leq f_{z'}(p)\to [(f_{z'}(p)\to u_z(p))\to\alpha].\]
By residuation the inequality above is equivalent to
\[u_z(p)\to\alpha\leq [f_{z'}(p)\otimes (f_{z'}(p)\to u_z(p))]\to\alpha,\]
which is equivalent to
\[[f_{z'}(p)\otimes (f_{z'}(p)\to u_z(p)]\otimes [u_z(p)\to\alpha]\leq \alpha,\]
which is the instance of a tautology in residuated lattices.
Conversely, to show that $ \val{p}_P^{[1]}(\alpha, z)\leq \descr{p}_P (\alpha, z)$, i.e.
\[\bigwedge_{z'\in Z^S_A}(\val{p}_P(z')\to (E_S(z', z)\to\alpha))\leq u_z(p)\to\alpha,\]
it is enough to show that
\begin{equation}
\label{eqq}
\val{p}_P(z')\to (E_S(z', z)\to\alpha))\leq u_z(p)\to\alpha
\end{equation}
for some $z'\in Z^S$. Let $z': = (f_p, u)$ such that $u:\mathbf{PP}\to \mathbf{A}$ is the constant map $1$, and  $f_p:\mathbf{PP}\to \mathbf{A}$ is defined by the assignment 
\[
f_p(\pi) = \left\{\begin{array}{ll}
1 & \text{if }  p\vdash \pi\\
0 & \text{otherwise. } \\
\end{array}\right.
\]
% let $f_p$ be the crisp-filter generated by $p$, 
 Hence, $E_S(z', z) = \bigwedge_{\pi\in \mathbf{PP}}(f_p(\pi)\to u_z(\pi))=\bigwedge_{p\vdash\pi}u_z(\pi) = u_z(p)$, the last identity holding since $u_z$ is order-preserving.  
 Therefore,  
 $\val{p}_P(z')\to (E_S(z', z)\to\alpha)) = f_p(p)\to (u_z(p)\to \alpha) = 1\to (u_z(p)\to \alpha) = u_z(p)\to \alpha$, which shows  \eqref{eqq}.
 
By adjunction, the inequality $\descr{p}_P \leq \val{p}_P^{[1]}$ proven above implies that   $\val{p}_P\leq\descr{p}_P^{[0]}$. Hence, to show that  $\val{p}_P=\descr{p}_P^{[0]}$, it is enough to show that  $\descr{p}_P^{[0]}(z)\leq\val{p}_P(z)$ for every $z\in Z^S$, i.e. 
\[\bigwedge_{(\alpha, z')\in Z^S_X} \descr{p}_P(\alpha, z')\to (E_S(z, z')\to \alpha)\leq f_z(p),\]
and to show the inequality above, it is enough to show that 
\begin{equation}\label{eqqq} \descr{p}_P(\alpha, z')\to (E_S(z, z')\to \alpha)\leq f_z(p)\end{equation}
for some $(\alpha, z')\in Z^S_X$.
Let $\alpha: =f_z(p)$ and $z':  = (f_{z'}, u_{p})$ be such that 
$u_{z'} = u_{p}: \mathbf{PP}\to \mathbf{A}$ is defined by the following assignment:
 \[
u_{p}(\pi) = \left\{\begin{array}{ll}
0 & \text{if } \pi\vdash \bot\\
f_z(p) & \text{if } \pi\vdash p \mbox{ and } \pi \not\vdash \bot\\
1 & \text{if } \pi\not\vdash p.
\end{array}\right.
\]
By construction, $u_{z'}$ is $\vee$-, $\bot$- and $\top$-preserving. Moreover, $\descr{p}_P(\alpha, z') = u_{z'}(p)\to\alpha = f_z(p)\to f_z(p) = 1$, and $E_S(z, z') = \bigwedge_{\pi\in \mathbf{PP}}(f_z(\pi)\to u_{z'}(\pi))  = \bigwedge_{\pi\vdash p}(f_z(\pi)\to f_z(p))  = 1$. %\marginnote{the second identity needs to be revised. Here is where we use that filters preserve $\bot$, i.e. $f(\bot) = 0$}
 Hence, the left-hand side of \eqref{eqqq} can be equivalently rewritten as $1\to (1\to f_z(p)) = f_z(p)$, which shows \eqref{eqqq} and concludes the proof.
\end{proof}
\begin{lemma}[Truth Lemma]
\label{lemma:truthlemma}
	For every  $\pi\in \mathsf{PP}$ and every $\sigma\in \mathsf{SD}$,
	\begin{enumerate}
\item  the maps $\val{\pi}_P: Z^S_A\to \mathbf{A}$ and $\descr{\pi}_P: Z^S_X\to \mathbf{A}$ coincide with those defined by the assignments $z\mapsto f_z(\pi)$ and $(\alpha, z)\mapsto u_z(\pi)\to \alpha$, respectively. 
\item  the maps $\val{\sigma}_S: Z^P_A\to \mathbf{A}$ and $\descr{\sigma}_S: Z^P_X\to \mathbf{A}$ coincide with those defined by the assignments $z\mapsto g_z(\sigma)$ and $(\alpha, z)\mapsto v_z(\sigma)\to \alpha$, respectively. 
\end{enumerate}
\end{lemma}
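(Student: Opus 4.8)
The plan is to argue by simultaneous induction on the complexity of $\pi$ and $\sigma$ (counting both the propositional connectives and the heterogeneous modalities, since a $\PP$-formula may contain $\SI$-subformulas through $\lozenge$, and conversely a $\SI$-formula may contain $\PP$-subformulas through $\Diamond$). Throughout I would use the complete symmetry between the two types: in each inductive case it is enough to treat item 1, item 2 being obtained by interchanging $S$ and $P$, $\lozenge$ and $\Diamond$, $\mathbf{PP}$ and $\mathbf{SD}$, and $(f_z,u_z)$ and $(g_z,v_z)$. As a preliminary reduction I would first record the following \emph{concept fact}: for every formula $\phi$ of type $\PP$, the pair $(\bar v_\phi,\bar d_\phi)$ given by $\bar v_\phi(z):=f_z(\phi)$ and $\bar d_\phi(\alpha,z):=u_z(\phi)\to\alpha$ is a formal $\mathbf{A}$-concept of $\mathbb{P}_{\mathbb{X}_S}$, i.e.\ $\bar v_\phi^{[1]}=\bar d_\phi$ and $\bar d_\phi^{[0]}=\bar v_\phi$ (and symmetrically for type-$\SI$ formulas and $\mathbb{P}_{\mathbb{X}_P}$). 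Its proof is verbatim the one carried out for propositional variables in the proof of the preceding lemma: that argument uses only the inequality $E_S(z',z)\leq f_{z'}(\chi)\to u_z(\chi)$, which holds for an \emph{arbitrary} formula $\chi$ straight from the definition of $E_S$ in Definition \ref{def:canonical frame}, together with a principal-filter witness state for the converse inequality, the cases $\phi=\bot,\top$ being read off directly from $V(\bot)$ and $V(\top)$. Granting this, and recalling that $(\val{\phi}_P,\descr{\phi}_P)$ is a concept because $V$ maps into $\mathbb{G}^+$, in each inductive step it suffices to establish \emph{either} $\val{\phi}_P=\bar v_\phi$ \emph{or} $\descr{\phi}_P=\bar d_\phi$; the remaining identity then follows by applying $(\cdot)^{[0]}$, resp.\ $(\cdot)^{[1]}$.

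Granting the reduction, the propositional cases are routine. The base case $\phi=p\in\mathsf{Prop}$ is precisely Definition \ref{def:canonical model}. For $\phi=\top$ one uses $\val{\top}_P\equiv 1=f_z(\top)$ since $f_z$ is $\top$-preserving; for $\phi=\bot$ one uses $\descr{\bot}_P\equiv 1$ and $u_z(\bot)\to\alpha=0\to\alpha=1$ since $u_z$ is $\bot$-preserving. For $\phi=\pi_1\wedge\pi_2$, the valuation clause gives $\val{\pi_1\wedge\pi_2}_P=\val{\pi_1}_P\wedge\val{\pi_2}_P$, which by the induction hypothesis is $z\mapsto f_z(\pi_1)\wedge f_z(\pi_2)=f_z(\pi_1\wedge\pi_2)$ as each $f_z$ is meet-preserving. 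For $\phi=\pi_1\vee\pi_2$ it is more convenient to compute with $\descr{\cdot}_P$: here $\descr{\pi_1\vee\pi_2}_P=\descr{\pi_1}_P\wedge\descr{\pi_2}_P$, which by the induction hypothesis is $(\alpha,z)\mapsto(u_z(\pi_1)\to\alpha)\wedge(u_z(\pi_2)\to\alpha)=(u_z(\pi_1)\vee u_z(\pi_2))\to\alpha=u_z(\pi_1\vee\pi_2)\to\alpha$, using that each $u_z$ is $\vee$-preserving together with the residuated-lattice identity $(a\to c)\wedge(b\to c)=(a\vee b)\to c$.

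The modal cases are where the real work lies and, I expect, the main obstacle. Take $\phi=\lozenge\sigma$ with $\sigma$ of type $\SI$ (the case $\Diamond\pi$ being dual). The valuation clause gives $\descr{\lozenge\sigma}_P=R_\lozenge^{[0]}[\val{\sigma}_S]$, and by the induction hypothesis $\val{\sigma}_S(z'')=g_{z''}(\sigma)$ for every $z''\in Z^P$; substituting and unfolding the definition of $R_\lozenge^{[0]}$ reduces the goal, for each $(\alpha,z)\in\mathbf{A}\times Z^S$, to an identity of the shape
\[
\bigwedge_{z''\in Z^P}\bigl(g_{z''}(\sigma)\to(R_\lozenge(z,z'')\to\alpha)\bigr)\;=\;u_z(\lozenge\sigma)\to\alpha .
\]
For "$\geq$" I would use the second presentation of $R_\lozenge$ in Definition \ref{def:canonical frame} (equal to the first by Lemma \ref{eq:premagicnew2}) to get $R_\lozenge(z,z'')\leq g_{z''}(\sigma)\to u_z(\lozenge\sigma)$, hence $g_{z''}(\sigma)\otimes R_\lozenge(z,z'')\leq u_z(\lozenge\sigma)$, whence the inequality follows by residuation, uniformly in $z''$. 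For "$\leq$" I would exhibit a witness state $z^{\ast}=(g^{\ast},v^{\ast})\in Z^P$ with $g^{\ast}$ the principal $\mathbf{A}$-filter generated by $\sigma$ (so $g^{\ast}(\sigma')=1$ iff $\sigma\vdash\sigma'$, and $0$ otherwise) and $v^{\ast}$ a suitable complement of a proper $\mathbf{A}$-ideal with $g^{\ast}\subseteq v^{\ast}$; then, since $\lozenge$ and $u_z$ are order-preserving, $R_\lozenge(z,z^{\ast})=\bigwedge_{\sigma\vdash\sigma'}u_z(\lozenge\sigma')=u_z(\lozenge\sigma)$ (the meet being attained at $\sigma'=\sigma$), so the $z^{\ast}$-summand of the left-hand meet equals $1\to(u_z(\lozenge\sigma)\to\alpha)=u_z(\lozenge\sigma)\to\alpha$. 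Finally $\val{\lozenge\sigma}_P=\descr{\lozenge\sigma}_P^{[0]}=\bar d_{\lozenge\sigma}^{[0]}=\bar v_{\lozenge\sigma}$ by the concept fact. The two points I expect to cost the most effort are (i) checking that $z^{\ast}$ genuinely is a state of the canonical frame — properness of $g^{\ast}$ (which forces the separate treatment of $\sigma=\bot$, but then $\lozenge\bot\vdash\bot$ so $\lozenge\sigma=\bot$ is already a base case) and the compatibility $g^{\ast}\subseteq v^{\ast}$ — handled exactly as in the proof of the canonical-frame lemma; and (ii) keeping the $\mathbf{A}$-valued index bookkeeping straight when unfolding $R_\lozenge^{[0]}$ under the convention $Z_A=Z$, $Z_X=\mathbf{A}\times Z$ used in this appendix.
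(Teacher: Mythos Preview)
Your proposal is correct and shares the paper's core strategy: simultaneous induction on $\pi$ and $\sigma$, with the nontrivial inequalities secured via witness states built from principal $\mathbf{A}$-filters together with suitably chosen complements of proper $\mathbf{A}$-ideals. The one substantive organizational difference is your preliminary ``concept fact'': you observe once and for all that the argument in the preceding lemma (showing that $(\val{p}_P,\descr{p}_P)$ is a formal $\mathbf{A}$-concept) nowhere uses that $p$ is atomic, so the pair $(\bar v_\phi,\bar d_\phi)$ is a concept for every $\phi$. This lets you verify only \emph{one} coordinate in each inductive clause and recover the other by applying $(\cdot)^{[0]}$ or $(\cdot)^{[1]}$. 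The paper, by contrast, re-derives the second coordinate from scratch in every case: in the $\wedge$-, $\vee$- and $\Diamond$-clauses it carries out a full witness-state computation for both $\val{\cdot}$ and $\descr{\cdot}$ separately, essentially repeating the concept-fact argument each time. Your factoring therefore yields a noticeably shorter proof at no real cost, and your handling of the modal step (the inequality $R_\lozenge(z,z'')\leq g_{z''}(\sigma)\to u_z(\lozenge\sigma)$ for ``$\geq$'', and the principal-filter witness $z^\ast$ for ``$\leq$'', with the caveat that $\sigma\vdash\bot$ collapses to the base case) is exactly what the paper does.
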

\begin{proof}
We proceed by simultaneous induction on $\pi$ and $\sigma$. If $\pi: = p\in \mathsf{Prop}$ (resp.~$\sigma: = p\in \mathsf{Prop}$), the statement follows immediately from  Definition \ref{def:canonical model}. 

If $\pi: =\top$, then $\val{\top}_P(z) = 1 = f_z(\top)$ since $\mathbf{A}$-filters are $\top$-preserving. Moreover,
\begin{center}
\begin{tabular}{r cl}
 $\descr{\top}_P(\alpha, z)$& $=$& $\val{\top}_P^{[1]}(\alpha, z)$\\
 & $=$& $ \bigwedge_{z'\in Z^S_A} [\val{\top}(z')\to (E_S(z', z)\to\alpha)]$\\
  & $=$& $ \bigwedge_{z'\in Z^S_A} [f_{z'}(\top)\to (E_S(z', z)\to\alpha)]$\\
  & $=$& $ \bigwedge_{z'\in Z^S_A} [E_S(z', z)\to\alpha]$. \\
  \end{tabular}
 \end{center}
 So, to show that $u_z(\top)\to \alpha\leq \descr{\top}_P(\alpha, z)$, we need to show that for every $z'\in Z^S_A$,
\[u_z(\top)\to \alpha\leq E_S(z', z)\to\alpha,\]
and for this, it is enough to show that 
\[\bigwedge_{\pi'\in \mathbf{PP}}[f_{z'}(\pi')\to u_z(\pi')]\leq u_z(\top),\]
which is true, since by definition, $u_z(\top) = 1$.
%\[\bigwedge_{\psi\in \mathbf{Fm}}[f_{z'}(\psi)\to u_z(\psi)]\leq f_{z'}(\top)\to u_z(\top) = 1\to  u_z(\top)  = u_z(\top).\]
To show that $\descr{\top}_P(\alpha, z)\leq  u_z(\top)\to \alpha$, i.e.~that \[\bigwedge_{z'\in Z^S_A} [E_S(z', z)\to\alpha]\leq u_z(\top)\to \alpha,\]
it is enough  to find some $z'\in Z^S$ such that  $E_S(z', z)\to\alpha\leq u_z(\top)\to \alpha$. Let $z': = (f_\top, u)$ such that $u:\mathbf{PP}\to \mathbf{A}$  maps $\top$ to $1$ and every other element of $\mathbf{PP}$ to $0$, and  $f_\top:\mathbf{PP}\to \mathbf{A}$ is defined by the assignment 
\[
f_\top(\pi') = \left\{\begin{array}{ll}
1 & \text{if }  \top\vdash \pi'\\
0 & \text{otherwise. } \\
\end{array}\right.
\]\
By definition, $E_S(z', z) = \bigwedge_{\pi'\in \mathbf{PP}}[f_{z'}(\pi')\to u_z(\pi')] =  \bigwedge_{\top\vdash \pi'}[1\to u_z(\pi')]=  \bigwedge_{\top\vdash \pi'}u_z(\pi')
\geq u_z(\top)$, the last inequality being due to the fact that  $u_z$ is order-preserving. 
Hence, $E_S(z', z)\to\alpha\leq u_z(\top)\to \alpha$, as required. The case in which $\sigma: = \top$ is analogous to the one above, and its proof is omitted.

If $\pi: =\bot$, then $\descr{\bot}_P(\alpha, z) = 1 = u_z(\bot)\to \alpha$ since complements of $\mathbf{A}$-ideals are $\bot$-preserving. Let us show that $\val{\bot}_P(z) = f_z(\bot)$. The inequality $f_z(\bot)\leq \val{\bot}_P(z) $ follows immediately from the fact that $f_z$ is a proper $\mathbf{A}$-filter and hence $f_z(\bot) = 0$. To show that $\val{\bot}_P(z) \leq f_z(\bot)$, by definition  $\val{\bot}_P(z)=\descr{\bot}^{[0]}(z) =  \bigwedge_{(\alpha, z')\in Z^S_X} [(u_{z'}(\bot)\to\alpha)\to (E_S(z, z')\to\alpha)]$,
%So, to show that $f_z(\bot)\leq\val{\bot}(z)$ we need to show that for every $(\alpha, z')\in Z_X$,
%\[f_z(\bot)\leq (u_{z'}(\bot)\to\alpha)\to (E(z, z')\to\alpha).\]
%Since by definition $E(z, z') = \bigwedge_{\psi\in \mathbf{Fm}}[f_{z}(\psi)\to u_{z'}(\psi)]\leq f_{z}(\bot)\to u_{z'}(\bot)$ and $\to$ is order-reversing in the first coordinate and order-preserving in the second one, 
%it is enough to show that for every $(\alpha, z')\in Z_X$,
%\[f_z(\bot)\leq (u_{z'}(\bot)\to\alpha)\to( (f_{z}(\bot)\to u_{z'}(\bot))\to \alpha).\]
%By residuation and associativity and commutativity of $\otimes$, the inequality above is equivalent to
%\[ f_z(\bot)\otimes (f_{z}(\bot)\to u_{z'}(\bot))\otimes (u_{z'}(\bot)\to\alpha) \leq \alpha\] which is a tautology in residuated lattices.
%
%Conversely, to show that $\val{\bot}(z)\leq f_z(\bot)$ 
hence, it is enough to find some $(\alpha, z')\in Z^S_X$ such that  \begin{equation}\label{eqqqqqq}(u_{z'}(\bot)\to\alpha)\to (E_S(z, z')\to\alpha) \leq f_z(\bot).\end{equation}
Let $\alpha: = f_z(\bot)$ and  let $z': = (f_{\top}, u_{\bot})$ such that $f_{\top}:\mathbf{PP}\to \mathbf{A}$ is defined as indicated above in the base case for $\pi: = \top$, and  $u_{\bot}:\mathbf{PP}\to \mathbf{A}$ is defined by the assignment 
	\[
u_{\bot}(\pi') = \left\{\begin{array}{ll}
0 & \text{if } \pi'\vdash \bot\\
%f_z(\bot) & \text{if }  \psi\not\vdash \bot \mbox{ and } \psi\vdash \phi_1 \vee \phi_2 \\
1 & \text{if }\pi'\not\vdash \bot.
\end{array}\right.
\]
By definition and since $f_z$ is order-preserving and $\bot$-preserving, $E_S(z, z') = \bigwedge_{\pi'\in \mathbf{PP}}[f_z(\pi')\to u_{\bot}(\pi')]  = 1$. Hence, \eqref{eqqqqqq} can be rewritten as follows:
\[(f_z(\bot)\to f_z(\bot))\to f_z(\bot)\leq f_z(\bot),\]
which is true since $f_z(\bot)\to f_z(\bot) =1$ and $1\to f_z(\bot) = f_z(\bot)$.
The case in which $\sigma: = \bot$ is analogous to the one above, and its proof is omitted.

If  $\pi: = \pi_1\wedge\pi_2$, then $\val{\pi_1\wedge\pi_2}_P(z) = (\val{\pi_1}_P \wedge \val{\pi_2})_P(z) = \val{\pi_1}_P(z) \wedge \val{\pi_2}_P(z) = f_z(\pi_1) \wedge f_z(\pi_2) = f_z(\pi_1 \wedge \pi_2)$. 
%Let us assume by induction hypothesis that $\descr{\phi_1 }(\alpha, z) = u_z(\phi_1 )\to \alpha$ and $\descr{\phi_2}(\alpha, z) = u_z(\phi_2)\to \alpha$, i.e.~for $i = 1, 2$,
%\[\bigwedge_{z'\in Z}[f_{z'}(\phi_i)\to (E(z', z)\to \alpha)] = u_z(\phi_i)\to \alpha.\]
%This implies\marginnote{we use $1\to\alpha = \alpha$} that $u_z(\phi_i)\to \alpha\leq f_z(\phi_i)\to \alpha$ for $i = 1, 2$.
Let us  show that $\descr{\pi_1 \wedge \pi_2}_P(\alpha, z) = u_z(\pi_1 \wedge \pi_2)\to \alpha$. By definition,
\begin{center}
\begin{tabular}{cl}
& $\descr{\pi_1 \wedge \pi_2}_P(\alpha, z)$\\
= & $\val{\pi_1 \wedge \pi_2}_P^{[1]}(\alpha, z)$\\
= & $\bigwedge_{z'\in Z^S_A}[\val{\pi_1 \wedge \pi_2}_P(z')\to (E_S(z', z)\to \alpha)]$\\
= & $\bigwedge_{z'\in Z^S_A}[f_{z'}(\pi_1 \wedge \pi_2)\to (E_S(z', z)\to \alpha)]$.\\
%= & $\bigwedge_{z'\in Z}[(f_{z'}(\phi_1) \wedge f_{z'}(\phi_2))\to (E(z', z)\to \alpha)].$\\
\end{tabular}
\end{center}
Hence,  to show that $u_z(\pi_1 \wedge \pi_2)\to \alpha\leq \descr{\pi_1 \wedge \pi_2}_P(\alpha, z)$, we need to show that for every $z'\in Z^S_A$,
\[u_z(\pi_1 \wedge \pi_2)\to \alpha\leq f_{z'}(\pi_1\wedge \pi_2)\to (E_S(z', z)\to \alpha).\]
Since by definition $E_S(z', z) = \bigwedge_{\pi'\in \mathbf{PP}}[f_{z'}(\pi')\to u_z(\pi')]\leq f_{z'}(\pi_1 \wedge \pi_2)\to u_z(\pi_1 \wedge \pi_2)$ and $\to$ is order-reversing in the first coordinate and order-preserving in the second one, 
it is enough to show that for every $z'\in Z^S_A$,
%\begin{center}
%\begin{tabular}{cl}
\[u_z(\pi_1 \wedge \pi_2)\to \alpha \leq f_{z'}(\pi_1\wedge \pi_2)\to ((f_{z'}(\pi_1 \wedge \pi_2)\to u_z(\pi_1 \wedge \pi_2))\to \alpha).\]
%\end{tabular}
%\end{center}

By residuation, the above inequality is equivalent to
%\begin{center}
%\begin{tabular}{cl}
\[u_z(\pi_1 \wedge \pi_2)\to \alpha \leq[f_{z'}(\pi_1\wedge \pi_2)\otimes (f_{z'}(\pi_1 \wedge \pi_2)\to u_z(\pi_1 \wedge \pi_2))]\to \alpha.\]
%\end{tabular}
%\end{center}
The above inequality is true if 
\[f_{z'}(\pi_1\wedge \pi_2)\otimes (f_{z'}(\pi_1 \wedge \pi_2)\to u_z(\pi_1 \wedge \pi_2))\leq u_z(\pi_1 \wedge \pi_2),\]
which is an instance of a tautology in residuated lattices.

To show that $\descr{\pi_1 \wedge \pi_2}_P(\alpha, z)\leq u_z(\pi_1 \wedge \pi_2)\to \alpha$, it is enough to find some $z'\in Z^S_A$ such that 
\[f_{z'}(\pi_1 \wedge \pi_2)\to (E_S(z', z)\to \alpha)\leq u_z(\pi_1 \wedge \pi_2)\to \alpha.\]
Let $z': = (f_{\pi_1 \wedge \pi_2}, u_\bot)$ such that $u_\bot:\mathbf{PP}\to \mathbf{A}$ %maps $\bot$ to $0$ and every other element of $\mathbf{Fm}$ to $1$, 
is defined as indicated above in the base case for $\pi: = \bot$,
and  $f_{\pi_1 \wedge \pi_2}:\mathbf{PP}\to \mathbf{A}$ is defined by the assignment 
\[
f_{\pi_1 \wedge \pi_2}(\pi') = \left\{\begin{array}{ll}
1 & \text{if }  \pi_1 \wedge \pi_2\vdash \pi'\\
0 & \text{otherwise. } \\
\end{array}\right.
\]
% let $f_p$ be the crisp-filter generated by $\phi_1 \wedge \phi_2$, 

For $z': = z$, since $f_{z'}(\pi_1 \wedge \pi_2) = 1$ and $1\to (E_S(z', z)\to \alpha) = E_S(z', z)\to \alpha$,  the inequality above becomes
\[E_S(z', z)\to \alpha\leq u_z(\pi_1 \wedge \pi_2)\to \alpha,\]
to verify which, it is enough to show that $u_z(\pi_1 \wedge \pi_2)\leq E_S(z', z)$. Indeed, 
by definition, $E_S(z', z) =  \bigwedge_{\pi'\in \mathbf{PP}}[f_{z'}(\pi')\to u_z(\pi')] =  \bigwedge_{\pi_1 \wedge \pi_2\vdash \pi'}[1\to u_z(\pi')]=  \bigwedge_{\pi_1 \wedge \pi_2\vdash \pi'}u_z(\pi')\geq u_z(\pi_1 \wedge \pi_2)$, the last inequality being due to the fact that  $u_z$ is order-preserving. The case in which $\sigma: = \sigma_1\wedge \sigma_2$ is analogous to the one above, and its proof is omitted.

\medskip

If  $\pi: = \pi_1\vee\pi_2$, then $\descr{\pi_1\vee\pi_2}_P(\alpha, z) = (\descr{\pi_1}_P \wedge \descr{\pi_2}_P)(\alpha, z) = \descr{\pi_1}_P(\alpha, z) \wedge \descr{\pi_2}_P(\alpha, z) = (u_z(\pi_1)\to\alpha) \wedge (u_z(\pi_2)\to \alpha) = (u_z(\pi_1)\vee u_z(\pi_2))\to \alpha)=  u_z(\pi_1 \vee \pi_2)\to \alpha$. 
Let us  show that $\val{\pi_1 \vee \pi_2}_P(z) = f_z(\pi_1 \vee \pi_2)$. By definition,
\begin{center}
\begin{tabular}{rcl}
 $\val{\pi_1 \vee \pi_2}_P(z)$
&= & $\descr{\pi_1 \vee \pi_2}_P^{[0]}(z)$\\
&= & $\bigwedge_{(\alpha, z')\in Z^S_X}[\descr{\pi_1 \vee \pi_2}_P(\alpha, z')\to (E_S(z, z')\to \alpha)]$\\
&= & $\bigwedge_{(\alpha, z')\in Z^S_X}[(u_{z'}(\pi_1 \vee \pi_2)\to\alpha)\to (E_S(z, z')\to \alpha)]$.\\
%= & $\bigwedge_{z'\in Z}[(f_{z'}(\phi_1) \wedge f_{z'}(\phi_2))\to (E(z', z)\to \alpha)].$\\
\end{tabular}
\end{center}
Hence,  to show that $f_z(\pi_1 \vee \pi_2)\leq \val{\pi_1 \vee \pi_2}(z)$, we need to show that for every $(\alpha, z')\in Z^S_X$,
\[f_z(\pi_1 \vee \pi_2)\leq (u_{z'}(\pi_1 \vee \pi_2)\to\alpha)\to (E_S(z, z')\to \alpha).\]
Since by definition $E_S(z, z') = \bigwedge_{\pi'\in \mathbf{PP}}[f_{z}(\pi')\to u_{z'}(\pi')]\leq f_{z}(\pi_1 \vee \pi_2)\to u_{z'}(\pi_1 \vee \pi_2)$ and $\to$ is order-reversing in the first coordinate and order-preserving in the second one, 
it is enough to show that for every $(\alpha, z')\in Z^S_X$,
%\begin{center}
%\begin{tabular}{cl}
\[f_z(\pi_1 \vee \pi_2)
\leq (u_{z'}(\pi_1 \vee \pi_2)\to\alpha)\to ((f_{z}(\pi_1 \vee \pi_2)\to u_{z'}(\pi_1 \vee \pi_2))\to \alpha).\]
%\end{tabular}
%\end{center}
By residuation, associativity and commutativity of $\otimes$, the inequality above is equivalent to
%\begin{center}
%\begin{tabular}{cl}
\[f_z(\pi_1 \vee \pi_2)\otimes (f_{z}(\pi_1 \vee \pi_2)\to u_{z'}(\pi_1 \vee \pi_2)) \otimes (u_{z'}(\pi_1 \vee \pi_2)\to\alpha) \leq \alpha,\]
%\end{tabular}
%\end{center}
which is a tautology in residuated lattices.

To show that $\val{\pi_1 \vee \pi_2}_P(z)\leq f_z(\pi_1 \vee \pi_2)$, it is enough to find some $(\alpha, z')\in Z^S_X$ such that 
\begin{equation}
\label{eqqqqq}
(u_{z'}(\pi_1 \vee \pi_2)\to\alpha)\to (E_S(z', z)\to \alpha)\leq f_z(\pi_1 \vee \pi_2).\end{equation}
Let $\alpha: = f_z(\pi_1 \vee \pi_2)$ and  let $z': = (f_{\top}, u_{\pi_1 \vee \pi_2})$ such that $f_{\top}:\mathbf{PP}\to \mathbf{A}$ is defined as indicated above in the base case for $\pi:= \top$,
%\[
%f_\top(\phi) = \left\{\begin{array}{ll}
%1 & \text{if }  \top\vdash \phi\\
%0 & \text{otherwise, } \\
%\end{array}\right.
%\]  
and  $u_{\pi_1 \vee \pi_2}:\mathbf{PP}\to \mathbf{A}$ is defined by the assignment 
	\[
u_{\pi_1 \vee \pi_2}(\pi') = \left\{\begin{array}{ll}
0 & \text{if } \pi'\vdash \bot\\
f_z(\pi_1 \vee \pi_2) & \text{if }  \pi'\not\vdash \bot \mbox{ and } \pi'\vdash \pi_1 \vee \pi_2 \\
1 & \text{if }\pi'\not\vdash \pi_1 \vee \pi_2.
\end{array}\right.
\]
By definition and since $f_z$ is order-preserving and proper, $E_S(z, z') = \bigwedge_{\pi'\in \mathbf{PP}}[f_z(\pi')\to u_{\pi_1 \vee \pi_2}(\psi)] = \bigwedge_{\bot\not\dashv \pi'\vdash\pi_1 \vee \pi_2} [f_z(\pi')\to f_z(\pi_1 \vee \pi_2) ] = 1$. Hence, \eqref{eqqqqq} can be rewritten as follows:
\[(f_z(\pi_1 \vee \pi_2)\to f_z(\pi_1 \vee \pi_2))\to f_z(\pi_1 \vee \pi_2)\leq f_z(\pi_1 \vee \pi_2),\]
which is true since $f_z(\pi_1 \vee \pi_2)\to f_z(\pi_1 \vee \pi_2) =1$ and $1\to f_z(\pi_1 \vee \pi_2) = f_z(\pi_1 \vee \pi_2)$. The case in which $\sigma: = \sigma_1\vee\sigma_2$ is analogous to the one above, and its proof is omitted. 

\medskip 
If $\sigma: = \Diamond\pi$, let us show that $\descr{\Diamond\pi}_S (\alpha, z) = v_z(\Diamond \psi)\to \alpha$ for any $(\alpha, z)\in Z^P_X$. By definition,
\begin{center}
\begin{tabular}{rcl}
$\descr{\Diamond\pi}_S (\alpha, z)$ &
 = & $R^{[0]}_\Diamond[\val{\pi}_P](\alpha, z)$\\
&= & $\bigwedge_{z'\in Z^S_A}[\val{\pi}_P(z')\to  (R_{\Diamond}(z, z') \to \alpha)]$\\
&= & $\bigwedge_{z'\in Z^S_A}[f_{z'}(\pi)\to  (R_{\Diamond}(z, z') \to \alpha)]$.\\
\end{tabular}
\end{center}
Hence,  to show that $v_z(\Diamond\pi)\to \alpha\leq \descr{\Diamond\pi}_S(\alpha, z)$, we need to show that for every $z'\in Z^S_A$,
\[v_z(\Diamond\pi)\to \alpha\leq f_{z'}(\pi)\to  (R_{\Diamond}(z, z') \to \alpha).\]
By definition and Lemma \ref{eq:premagicnew2},  $R_\Diamond(z, z') = \bigwedge_{\pi'\in \mathbf{PP}}(f_{z'}(\pi')\to v_z(\Diamond\pi')) \leq f_{z'}(\pi)\to v_{z}(\Diamond \pi)$, 
and since $\to$ is order-reversing in the first coordinate and order-preserving in the second one, 
it is enough to show that for every $z'\in Z^S_A$,
\[v_z(\Diamond\pi)\to \alpha\leq f_{z'}(\pi)\to  ((f_{z'}(\pi)\to v_{z}(\Diamond \pi)) \to \alpha).\]
By residuation, associativity and commutativity of $\otimes$, the inequality above is equivalent to
\[ [f_{z'}(\pi)\otimes (f_{z'}(\pi)\to v_{z}(\Diamond \pi))] \otimes (v_z(\Diamond\pi)\to \alpha) \leq \alpha,\]
which is a tautology in residuated lattices.

To show that $\descr{\Diamond\pi}_S(\alpha, z)\leq v_z(\Diamond\pi)\to \alpha$, it is enough to find some $z'\in Z^S_A$ such that
\begin{equation}
\label{eqqqqqqq}
f_{z'}(\pi)\to  (R_{\Diamond}(z, z') \to \alpha)\leq v_z(\Diamond\pi)\to \alpha.\end{equation}
Let $z': = (f_{\pi}, u_\bot)$ such that $u_\bot:\mathbf{PP}\to \mathbf{A}$ %maps $\bot$ to $0$ and everything other element of $\mathbf{Fm}$ to $1$,
is defined as indicated above in the base case for $\pi: = \bot$,
 and  $f_{\pi}:\mathbf{PP}\to \mathbf{A}$ is defined by the assignment 
\[
f_{\pi}(\pi') = \left\{\begin{array}{ll}
1 & \text{if }  \pi\vdash \pi'\\
0 & \text{otherwise. } \\
\end{array}\right.
\]
By definition and Lemma \ref{eq:premagicnew2},
\begin{center}
\begin{tabular}{rcl}
$R_\Diamond(z, z')$ %&  = &$\bigwedge_{\pi'\in \mathbf{PP}}(f_{z'}^{-\Diamond}(\phi)\to  u_{z}(\phi)) $\\
&=&$ \bigwedge_{\pi'\in \mathbf{PP}}(f_{z'}(\pi')\to  v_{z}(\Diamond \pi'))$\\
&=&$ \bigwedge_{\pi\vdash \pi'} v_{z}(\Diamond \pi')$\\
&$\geq$&$  v_{z}(\Diamond \pi)$,\\
\end{tabular}
\end{center}
the last inequality being due to the fact that $v_z$ and $\Diamond$ are order-preserving. Since $\to$ is order reversing in the first coordinate and order-preserving in the second one, to show \eqref{eqqqqqqq} it is enough to show that 
\[f_{z'}(\pi)\to  (v_{z}(\Diamond \pi) \to \alpha)\leq v_z(\Diamond\pi)\to \alpha.\]
This immediately follows from the fact that, by construction,  $f_{z'}(\pi) = 1$.

Let us show that $\val{\Diamond\pi}_S(z) = g_z(\Diamond\pi)$ for every $z\in Z^P_A$. By definition,
\begin{center}
\begin{tabular}{r cl}
$\val{\Diamond\pi}_S (z)$
&  = & $\descr{\Diamond\pi}_S^{[0]}(z)$\\ 
& = & $\bigwedge_{(\alpha, z')\in Z^P_X}[\descr{\Diamond\pi}(\alpha, z')\to (E_P(z, z')\to\alpha)]$\\
& = & $\bigwedge_{(\alpha, z')\in Z^P_X}[(v_{z'}(\Diamond\pi)\to \alpha)\to (E_P(z, z')\to\alpha)].$\\
\end{tabular}
\end{center}
Hence,  to show that $g_z(\Diamond\pi)\leq \val{\Diamond\pi}_S(z)$, we need to show that for every $(\alpha, z')\in Z^P_X$,
\[g_z(\Diamond\pi)\leq (v_{z'}(\Diamond\pi)\to \alpha)\to (E_P(z, z')\to\alpha).\]
Since by definition $E_P(z, z') = \bigwedge_{\sigma'\in \mathbf{SD}}[g_{z}(\sigma')\to v_{z'}(\sigma')]\leq g_{z}(\Diamond\pi)\to v_{z'}(\Diamond\pi)$ and $\to$ is order-reversing in the first coordinate and order-preserving in the second one, 
it is enough to show that for every $(\alpha, z')\in Z^P_X$,
\[g_z(\Diamond\pi)\leq (v_{z'}(\Diamond\pi)\to \alpha)\to ((g_{z}(\Diamond\pi)\to v_{z'}(\Diamond\pi))\to\alpha).\]
By residuation, associativity and commutativity of $\otimes$, the inequality above is equivalent to
\[ [g_z(\Diamond\pi)\otimes (g_{z}(\Diamond\pi)\to v_{z'}(\Diamond\pi))] \otimes (v_{z'}(\Diamond\pi)\to \alpha) \leq \alpha,\]
which is a tautology in residuated lattices.

To show that $\val{\Diamond\pi}_S(z)\leq g_z(\Diamond\pi)$, it is enough to find  some $(\alpha, z')\in Z^P_X$ such that 
\begin{equation}
\label{eqqqqqqqq}(u_{z'}(\Diamond\pi)\to \alpha)\to (E_P(z, z')\to\alpha)\leq g_z(\Diamond\pi).\end{equation}
Let $\alpha: = g_z(\Diamond\pi)$ and 
  let $z': = (g_{\top}, v_{\Diamond\pi})$ such that $g_{\top}:\mathbf{SD}\to \mathbf{A}$ maps $\top$ to $1$ and every other element of $\mathbf{SD}$ to $0$, and  $v_{\Diamond\pi}:\mathbf{SD}\to \mathbf{A}$ is defined by the assignment 
	\[
v_{\Diamond\pi}(\sigma') = \left\{\begin{array}{ll}
0 & \text{if } \sigma'\vdash \bot\\
g_z(\Diamond\pi) & \text{if }  \sigma'\not\vdash \bot \mbox{ and } \sigma'\vdash \Diamond\pi \\
1 & \text{if }\sigma'\not\vdash \Diamond\pi.
\end{array}\right.
\]
By definition and since $g_z$ is order-preserving and proper, $E_P(z, z') = \bigwedge_{\sigma'\in \mathbf{SD}}[g_z(\sigma')\to v_{\Diamond\pi}(\sigma')] = \bigwedge_{\sigma'\vdash\Diamond\pi} [g_z(\sigma')\to g_z(\Diamond\psi) ] = 1$. Hence, \eqref{eqqqqqqqq} can be rewritten as follows:
\[(g_z(\Diamond\pi)\to g_z(\Diamond\pi))\to g_z(\Diamond\pi)\leq g_z(\Diamond\phi),\]
which is true since $g_z(\Diamond\pi)\to g_z(\Diamond\pi) =1$ and $1\to g_z(\Diamond\pi) = g_z(\Diamond\pi)$. The case in which $\pi: = \lozenge\sigma$ is analogous to the one above, and its proof is omitted. 
\end{proof}

%The following theorem follows directly from the Truth Lemma above:

\begin{theorem} \label{thm:completeness}
	The basic multi-type normal $\mathcal{L}_{\mathsc{MT}}$-logic $\mathbf{L}$ is sound and complete w.r.t.~the class of graph-based $\mathbf{A}$-frames. 
\end{theorem}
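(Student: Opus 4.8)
The plan is to prove the two halves of the equivalence separately. \emph{Soundness} is essentially in hand: by the same argument as Lemma~\ref{prop:fplus} (and its heterogeneous analogue), the complex algebra $\mathbb{G}^+$ of any graph-based $\mathbf{A}$-frame $\mathbb{G}$ — in the variant sense fixed at the start of this appendix — is a heterogeneous $\mathcal{L}_{\mathsc{MT}}$-algebra, with $\mathbb{X}_S^+,\mathbb{X}_P^+$ complete lattices and $\langle R_\Diamond\rangle,\langle R_\lozenge\rangle$ normal (indeed completely join-preserving) operators, the type assignment being the one dictated by the inversion recorded in Section~\ref{sec:interpretation}. Since a model $\mathbb{M}=(\mathbb{G},V)$ interprets formulas through the heterogeneous-algebra homomorphism $V:\mathcal{L}_{\mathsc{MT}}\to\mathbb{G}^+$ and $\mathbb{M}\models\phi\vdash\psi$ iff $V(\phi)\le V(\psi)$, it follows from the algebraic soundness of $\mathbf{L}$ (Proposition~\ref{prop:completeness}) that every sequent derivable in $\mathbf{L}$ is valid on every graph-based $\mathbf{A}$-frame; so I would just spell this out.

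For \emph{completeness} I would argue contrapositively: assume the sequent $\phi\vdash\psi$ is not derivable in $\mathbf{L}$ and exhibit a graph-based $\mathbf{A}$-model refuting it, namely the canonical model $\mathbb{M}=(\mathbb{G},V)$ of Definitions~\ref{def:canonical frame}--\ref{def:canonical model}, whose well-definedness (as a frame and as a model) is guaranteed by the two lemmas preceding the Truth Lemma, and whose interpretation of formulas is pinned down by the Truth Lemma (Lemma~\ref{lemma:truthlemma}). By type-uniformity I may assume $\phi,\psi\in\mathbf{PP}$, the $\mathbf{SD}$-case being symmetric (interchange $Z^S\leftrightarrow Z^P$ and the pairs $(f,u)\leftrightarrow(g,v)$). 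Consistency of $\mathbf{L}$ together with $\phi\not\vdash\psi$ forces $\phi\not\vdash\bot$ (else $\phi\vdash\psi$), hence $[\phi]\neq[\bot]$ in the Lindenbaum--Tarski algebra.

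The one genuinely new ingredient is a separation step producing a single state of $Z^S$ at which $\phi$ is fully supported while $\psi$ is not. Take $f_\phi:\mathbf{PP}\to\mathbf{A}$ crisp with $f_\phi(\pi)=1$ iff $\phi\vdash\pi$; the $\wedge$- and $\top$-axioms/rules together with $\phi\not\vdash\bot$ make $f_\phi$ a proper $\mathbf{A}$-filter. Take $u_0:\mathbf{PP}\to\mathbf{A}$ crisp with $u_0([\bot])=0$ and $u_0(\pi)=1$ otherwise; since $\pi_i\le\pi_1\vee\pi_2$, the identity $\pi_1\vee\pi_2=[\bot]$ forces $\pi_1=\pi_2=[\bot]$, and a direct check then shows $u_0$ is the complement of a proper $\mathbf{A}$-ideal. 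As $\phi\not\vdash\bot$, whenever $f_\phi(\pi)=1$ we have $\pi\neq[\bot]$ and hence $u_0(\pi)=1$, so $f_\phi\le u_0$ pointwise, i.e.\ $\bigwedge_{\pi\in\mathbf{PP}}(f_\phi(\pi)\to u_0(\pi))=1$; thus $z:=(f_\phi,u_0)\in Z^S$. By Lemma~\ref{lemma:truthlemma}.1, $\val{\phi}_P(z)=f_z(\phi)=f_\phi(\phi)=1$ and $\val{\psi}_P(z)=f_z(\psi)=f_\phi(\psi)=0$ (using $\phi\not\vdash\psi$); since $\mathbf{A}$ is nondegenerate, $1\not\le 0$, so $\val{\phi}_P\not\subseteq\val{\psi}_P$ and therefore $\mathbb{M}\not\models\phi\vdash\psi$ by Definition~\ref{def:Sequent:True:In:Model}. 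As $\mathbb{M}$ is built on a graph-based $\mathbf{A}$-frame, $\phi\vdash\psi$ is not valid, which is what completeness requires.

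I expect the real work to sit where it already does in the excerpt: the Truth Lemma, especially its modal clauses, which depend on Lemma~\ref{eq:premagicnew2} and on judicious choices of witnessing filters and co-ideals, and — upstream of it — the verification that the canonical $R_\Diamond,R_\lozenge$ are compatible, which rests on the join-irreducibility of $\top$ in the Lindenbaum--Tarski algebras (Lemma~\ref{lemma:f minus diam is filter}.5--6). Granting those, the separation step above is easy, and this ease is characteristically many-valued: the ``reflexivity'' constraint $\bigwedge_\pi(f(\pi)\to u(\pi))=1$ can always be met by padding the co-ideal complement up to the near-top map $u_0$, so no prime-filter/BPI-style extension — which would be delicate in the non-distributive setting — is needed.
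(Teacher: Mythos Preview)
Your proof is correct and follows essentially the same route as the paper: canonical model plus Truth Lemma, refuting the non-derivable sequent at a state whose first coordinate is the crisp principal filter $f_\phi$. The only cosmetic difference is that the paper pairs $f_\phi$ with $u_\psi$ (defined by $u_\psi(\chi)=0$ iff $\chi\vdash\psi$) rather than your universal $u_0$, and then obtains $\val{\psi}(z)=0$ by instantiating $\descr{\psi}^{[0]}$ at $(0,z)$ rather than by the direct appeal to Lemma~\ref{lemma:truthlemma} that you make; your variant is equally valid and arguably cleaner.
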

\begin{proof}
Consider a type-uniform $\mathcal{L}_{\mathrm{MT}}$-sequent $\phi \vdash \psi$ that is not derivable in $\mathbf{L}$. Consider the proper filter $f_{\phi}$  and complement of proper ideal $u_{\psi}$  given by 
\[
f_{\phi}(\chi) = \left\{\begin{array}{ll}
1 &\text{if } \phi \vdash \chi\\
0 &\text{if } \phi \not \vdash \chi
\end{array} \right.
\]
and
\[
u_{\psi}(\chi) = \left\{\begin{array}{ll}
0 &\text{if } \chi \vdash \psi\\
1 &\text{if } \chi \not \vdash \psi.
\end{array} \right.
\]
Then $\bigwedge_{\chi}(f_{\phi}(\chi)\to u_{\psi}(\chi)) = 1$, for else there would have to be a formula $\chi_0 $ such that $f_{\phi}(\chi_0) = 1$ and $u_{\psi}(\chi_0) = 0$, which would mean that $\phi \vdash \chi_0$ and $\chi_0 \vdash \psi$ and hence that $\phi \vdash \psi$, in contradiction with the assumption that $\phi \vdash \psi$ is not derivable. It follows that $(f_{\phi}, u_{\psi})$ is a state (of the appropriate type) in the canonical model $\mathbb{M}$. 
By the Truth Lemma, $\val{\phi}(z) = f_{\phi}(\phi) = 1$, and moreover
\begin{center}
\begin{tabular}{cl}
   & $\descr{\psi}^{[0]}(z)$\\
= & $\bigwedge_{(\alpha, z')\in Z_X}\descr{\psi}(\alpha, z')\to (E(z, z')\to\alpha)$\\
$\leq$ & $\descr{\psi}(0, z)\to (E(z, z)\to 0)$\\
= & $(u_{\psi}(\psi) \to 0)\to (E(z, z)\to 0)$\\
= & $(0 \to 0)\to (1\to 0)$\\
 = & 0,
\end{tabular}
\end{center}
which proves the claim. 
%Let $1 > \beta \in \mathbf{A}$. Then, by the Truth Lemma, $\val{\phi}(z) = f_{\phi}(\phi) = 1$ and $\descr{\psi}(\beta, z) = u_{\psi}(\psi) \to \beta = 0 \to \beta = 1$, and so $\mathbb{M}, z \Vdash^{1} \phi$ while $\mathbb{M}, (\beta, z) \succ^{1} \psi$. So, in order to have  $\mathbb{M} \models \phi \vdash \psi$ (Definition \ref{def:Sequent:True:In:Model}), we would need $1 \leq E(z,z) \to \beta = 1 \to \beta = \beta$, which is not the case as $\beta < 1$. So we conclude that  $\mathbb{M} \not \models \phi \vdash \psi$, as desired.
%Then $\phi$ is not $\bot$ and $\psi$ is not $\top$ Then $\lfloor \phi\rfloor\cap \lceil \psi\rceil  = \emptyset$ in the Lindenbaum-Tarski algebra. Let $z:= (\lfloor \phi\rfloor, \lceil \psi\rceil)$ be the corresponding state in $\mathbb{M}_{\mathbf{L}}$
%By Lemma~\ref{lemma:truthlemma} we have $\mathbb{M},z\Vdash \phi$ and $\mathbb{M}, z\succ \psi$, but $zEz$. Hence $\mathbb{M} \not\models \phi \vdash \psi$.  
\end{proof}

\end{document}